\numberwithin{equation}{section}
\newtheorem{thm}{Theorem}[section]
\newtheorem{prop}[thm]{Proposition}
\newtheorem{lem}[thm]{Lemma}
\theoremstyle{definition}
\newtheorem{defn}[thm]{Definition}
\newtheorem{rmk}[thm]{Remark}
\DeclareMathOperator{\Id}{Id}
\DeclareMathOperator{\cupl}{cupl}
\newcommand{\real}{\mathbb{R}}
\newcommand{\nat}{\mathbb{N}}
\newcommand{\Tan}{\mathrm{T}}
\newcommand{\Dif}{\mathrm{D}}
\newcommand{\dif}{\mathrm{d}}
\newcommand{\End}{\mathcal{L}}
\DeclarePairedDelimiter{\abs}{\lvert}{\rvert}
\DeclarePairedDelimiter{\norm}{\lVert}{\rVert}
\DeclarePairedDelimiter{\parens}{(}{)}
\DeclarePairedDelimiter{\set}{\{}{\}}
\DeclarePairedDelimiter{\brackets}{\lbrack}{\rbrack}
\DeclarePairedDelimiter{\cbrackets}{\{}{\}}
\DeclarePairedDelimiter{\angles}{\langle}{\rangle}
\DeclarePairedDelimiter{\cci}{\lbrack}{\rbrack}
\DeclarePairedDelimiter{\coi}{\lbrack}{\lbrack}
\DeclarePairedDelimiter{\ooi}{\rbrack}{\lbrack}
\title[Concentrated solutions to $(\mathrm{SBP}_\epsilon)$]{Concentrated solutions to the Schrödinger--Bopp--Podolsky system with a positive potential}
\begin{document}

\author{Gustavo de Paula Ramos}
\address[G. de Paula Ramos]{Instituto de Matemática e Estatística, Universidade de São Paulo, Rua do Matão, 1010, 05508-090 São Paulo SP, Brazil}
\email{gpramos@ime.usp.br, gustavopramos@gmail.com}
\urladdr{http://gpramos.com}

\begin{abstract}
Consider the Schrödinger--Bopp--Podolsky system
\begin{equation}\tag{$\mathrm{SBP}_\epsilon$}\label{SBP_epsilon}
\begin{cases}
	-\epsilon^2\Delta u+\parens{V+K\phi}u=u\abs{u}^{p-1};\\
	\Delta^2\phi-\Delta\phi=4\pi K u^2
\end{cases}
~\text{in}~\real^3
\end{equation}
for sufficiently small $\epsilon>0$, where $V,K\colon\real^3\to\coi{0,\infty}$; $p\in\ooi{1,5}$ are fixed and we want to solve for $u,\phi\colon\real^3\to\real$. Under certain hypotheses, we estimate the multiplicity of solutions in function of a critical manifold of $V$ and we establish the existence of solutions concentrated around critical points of $V$.

\smallskip
\noindent \textbf{Keywords.} Elliptic systems, Schrödinger–Bopp–Podolsky equations, standing wave solutions, semiclassical states, perturbation methods.

\smallskip
\noindent \textbf{2010 Mathematics Subject Classification.} 35J48, 35B25, 35Q60.
\end{abstract}

\date{\today}
\maketitle
\tableofcontents

\section{Introduction}

The behavior of a free quantum particle with mass $m$ and charge density $K$ subject to a potential $\tilde{V}$ and an electric potential $\phi$ may be modeled by
\begin{equation}\label{Equation:NSE}
	i\hbar\partial_t\psi
	=
	-\frac{\hbar^2}{2m}\Delta_x\psi+\parens{K\phi+\tilde{V}}\psi-\psi|\psi|^{p-1},
\end{equation}
where
\[
	\real^3\times\real
	\ni
	\parens{x,t}
	\mapsto
	\psi(x,t)
	\in
	\mathbb{C}
\]
denotes the wave function of the particle and $\hbar\in\ooi{0,\infty}$ is a fixed constant. By restricting ourselves to search for \emph{standing wave} solutions to \eqref{Equation:NSE} with a known energy $E\in\real$, i.e., wave functions of the form
$
	\psi(x,t)=u(x)e^{iEt/\hbar}
$
for an unknown $u\colon\real^3\to\real$, it becomes sufficient to solve
\[
	-\epsilon^2 \Delta u + \parens{V + K \phi} u - u \abs{u}^{p - 1}
	=
	0
	~\text{in}~
	\real^3,
\]
where $V:=\tilde{V}+E$ and
$\epsilon := \parens{2m}^{-1/2} \hbar$.

Let us suppose that $\epsilon$ may be arbitrarily small and that $\phi$ denotes the electric potential generated by the particle itself. In the trivial case $K\equiv 0$, this scenario is modeled by the \emph{Nonlinear Schrödinger} equation
\begin{equation} \tag{$\mathrm{NLS}_\epsilon$} \label{NLS_epsilon}
	-\epsilon^2 \Delta u + V u = u \abs{u}^{p - 1}
	~\text{in}~
	\real^3.
\end{equation}
\sloppy
If we consider Maxwell's theory of electromagnetism, then we obtain the \emph{Schrödinger--Poisson--Slater} system
\begin{equation}\tag{$\mathrm{SPS}_\epsilon$}\label{SPS_epsilon}
\begin{cases}
	-\epsilon^2\Delta u+\parens{V+K\phi}u=u|u|^{p-1};\\
	-\Delta\phi=4\pi K u^2\\
\end{cases}
~\text{in}~\real^3.
\end{equation}
Finally, we obtain the \emph{Schrödinger--Bopp--Podolsky} system \eqref{SBP_epsilon} if we consider the Bopp--Podolsky theory of electromagnetism instead (for more details, see \cite[Section 2]{dAveniaSiciliano2019}).

The study of the Schrödinger--Bopp--Podolsky system has gained traction in the mathematical literature only recently after the publication of \cite{dAveniaSiciliano2019}, where d'Avenia and Siciliano introduced the problem; presented the appropriate functional framework; established the existence/non-existence of solutions to
\[
\begin{cases}
	-\Delta u+\omega u + q^2 \phi u = u\abs{u}^{p-2};\\
	\Delta^2 \phi - a^2 \Delta \phi = 4 \pi u^2
\end{cases}
~\text{in}~\real^3
\]
according to the values of the parameters $q, p$ for fixed $a, \omega \in \ooi{0, \infty}$ and proved that its solutions tend to solutions of the Schrödinger--Poisson--Slater system as $a \to 0^+$.

In particular, there is a large and increasing number of studies about this system in the presence of a potential $V \colon \real^3 \to \coi{0, \infty}$. For instance, \cite{ChenTang2020, LiPucciTang2020, Zhang2022, HuWuTang2023, MascaroSiciliano2023, FigueiredoSiciliano2023}. Among those, the ones which considered contexts closest to ours are \cite{MascaroSiciliano2023, FigueiredoSiciliano2023}, where the authors established multiplicity results for solutions in the \emph{semiclassical limit}, i.e., when $\epsilon > 0$ is sufficiently small. In \cite{MascaroSiciliano2023}, Mascaro and Siciliano use a technique due to Benci and Cerami to estimate the number of solutions in function of the Lusternik--Schnirelmann category of
\[
	M:=\set{x \in \real^3 : V \parens{x}=\inf V>0}.
\]
Among other results, Figueiredo and Siciliano employed the Lusternik--Schnirelmann and Morse theories in \cite{FigueiredoSiciliano2023} to establish multiplicity results of negative solutions.

On the other hand, we know of few studies which considered the Schrödinger--Bopp--Podolsky system in the presence of a (possibly non-constant) charge density $K \colon \real^3 \to \coi{0, \infty}$, that is, \cite{TengYan2021, JiaLiChen2022, Peng2022}. To the best of our knowledge, \cite{Peng2022} is the
only paper which simultaneously considered a potential $V$ and a charge density $K$ in the context of the Schrödinger--Bopp--Podolsky system. In this paper, Peng established the existence of ground-state solutions/infinite high energy solutions and studied the existence/multiplicity of sign-changing solutions by considering multiple techniques. We remark that, unlike the present text, Peng did not consider semiclassical states or used any perturbative techniques.

In this context, the goal of this paper is to show that concentration results which Ianni and Vaira proved to hold for \eqref{SPS_epsilon} in \cite{IanniVaira2008} also hold for \eqref{SBP_epsilon}. We also estimate the number of solutions to \eqref{SBP_epsilon} analogously as in Ambrosetti, Malchiodi and Secchi's result for \eqref{NLS_epsilon} in \cite{AmbrosettiMalchiodiSecchi2001}.

Let us state our hypotheses on $V, K \colon \real^3 \to \coi{0, \infty}$. We suppose throughout the text that  $V$ satisfies
\begin{enumerate}[label={$\parens{V_\arabic*}$}]
\item \label{V_1}
	$V$ is of class $C^2$ and $\norm{V}_{C^2}<\infty$;
\item \label{V_2}
	$\inf V>0$.
\end{enumerate}
Similarly, we always suppose that
\begin{enumerate}[label={$\parens{K_\arabic*}$}]
\item \label{K_1}
	$K$ is continuous and bounded.
\end{enumerate}

Before stating our results about concentrated solutions, we need to introduce a certain solution to
\begin{equation}\label{Equation:SimpleProblem}
	-\Delta u+u=u\abs{u}^{p-1}~\text{in}~\real^3.
\end{equation}
It is well-known that there exists a unique positive spherically symmetric function that vanishes at infinity and solves \eqref{Equation:SimpleProblem} (see the theorem at \cite[p. 23]{Kwong1989}), which we will always denote by
\[
	U \colon \real^3 \to \ooi{0, \infty}.
\]
Furthermore, it follows from \cite[Proposition 4.1]{GidasNiNirenberg1981} that
\begin{equation}\label{Equation:ExponentialDecay}
	\text{there exists}~
	r\in\ooi{0,\infty}
	~\text{such that}~
	U\parens{x}, \abs{\nabla U\parens{x}}
	\lesssim
	\frac{e^{-\abs{x}}}{\abs{x}}
	~\text{whenever}~
	\abs{x}>r,
\end{equation}
and so $U\in H^1$.

Let us state our first result, which concerns the existence of a family of solutions concentrated around non-degenerate critical points of $V$.

\begin{thm}\label{Theorem:SolutionsNonDegenerate}
If $x_0$ is a non-degenerate critical point of $V$, then there exist $\epsilon_0\in\ooi{0,1}$ and $\set{u_\epsilon}_{\epsilon\in\ooi{0,\epsilon_0}}\subset H^1$ such that given $\epsilon\in\ooi{0,\epsilon_0}$, $\parens{u_\epsilon,\varphi_\epsilon}$ is a weak solution to \eqref{SBP_epsilon} and
\begin{equation} \label{Equation:Concentration}
	\norm*{
		u_\epsilon
		-
		U_{\lambda_{x_0}}\parens*{
			\frac{1}{\epsilon}\parens{\cdot - x_0}
		}
	}_{H^1}
	\to
	0
	~\text{as}~
	\epsilon \to 0,
\end{equation}
where
$\lambda_{x_0} := V \parens{x_0}^{1/2}$;
$U_{\lambda} := \lambda^{2 / \parens{p - 1}} U \parens{\lambda \cdot}$
and
$\varphi_\epsilon:=\phi_{\epsilon, u_\epsilon^2}$
is defined in \eqref{Equation:phi}.
\end{thm}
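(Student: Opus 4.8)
The plan is to recast the system as a single nonlocal equation and then carry out a finite-dimensional Lyapunov--Schmidt reduction around the soliton, following the scheme of Ambrosetti--Malchiodi--Secchi in \cite{AmbrosettiMalchiodiSecchi2001} and Ianni--Vaira in \cite{IanniVaira2008}. First I would solve the Bopp--Podolsky equation for $\phi$: since the operator $\Delta^2-\Delta$ is invertible with an explicit nonnegative Green kernel, the assignment $u\mapsto\phi_{\epsilon,u^2}$ of \eqref{Equation:phi} is well-defined, smooth and nonnegative, so that substituting it into the first equation reduces \eqref{SBP_epsilon} to the scalar nonlocal problem
\[
	-\epsilon^2\Delta u+\parens*{V+K\phi_{\epsilon,u^2}}u=u\abs{u}^{p-1}
	\quad\text{in}~\real^3,
\]
whose weak solutions are the critical points of a $C^2$ energy $I_\epsilon$ on $H^1$, the nonlocal term contributing a quartic functional.

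Next I would rescale by $x=x_0+\epsilon y$, so that the leading part of the rescaled functional is the one associated with $-\Delta v+V\parens{x_0}v=v\abs{v}^{p-1}$, whose unique positive radial ground state is exactly $U_{\lambda_{x_0}}$ with $\lambda_{x_0}=V\parens{x_0}^{1/2}$. The translates of this profile form a critical manifold of the limit functional, and the crucial structural input is the non-degeneracy of $U$: the kernel of the linearized operator $-\Delta+1-p\abs{U}^{p-1}$ is spanned precisely by $\partial_1 U,\partial_2 U,\partial_3 U$, which is well-known. Using approximate solutions $z_{\epsilon,\xi}:=U_{\lambda_\xi}\parens*{\parens{\cdot-\xi}/\epsilon}$ with $\lambda_\xi:=V\parens{\xi}^{1/2}$ and $\xi$ ranging near $x_0$, I would split $H^1$ into the three-dimensional tangent space and its orthogonal complement, and solve the auxiliary equation on the complement by the implicit function theorem. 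The uniform-in-$\epsilon$ invertibility of the linearized operator restricted to the complement --- including its nonlocal contribution --- yields a correction $w_{\epsilon,\xi}$ with $\norm{w_{\epsilon,\xi}}_{H^1}\to 0$, so that $z_{\epsilon,\xi}+w_{\epsilon,\xi}$ solves the equation modulo the kernel.

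The reduction is completed by studying the finite-dimensional functional $\Phi_\epsilon\parens{\xi}:=I_\epsilon\parens*{z_{\epsilon,\xi}+w_{\epsilon,\xi}}$, whose critical points correspond, by the variational structure, to genuine solutions. Expanding $\Phi_\epsilon$ asymptotically, the leading $\xi$-dependence is governed, up to a positive multiplicative constant and an additive constant, by $V\parens{\xi}$, the contribution of $K\phi$ being of lower order in this expansion; hence, to leading order, critical points of $\Phi_\epsilon$ correspond to critical points of $V$. Because $x_0$ is a non-degenerate critical point of $V$, the gradient $\nabla\Phi_\epsilon$ has a non-degenerate zero near $x_0$, so the implicit function theorem (or a local degree argument) produces a critical point $\xi_\epsilon\to x_0$ for every sufficiently small $\epsilon$. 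Setting $u_\epsilon:=z_{\epsilon,\xi_\epsilon}+w_{\epsilon,\xi_\epsilon}$ and $\varphi_\epsilon:=\phi_{\epsilon,u_\epsilon^2}$ gives the desired weak solution, and the concentration estimate \eqref{Equation:Concentration} follows from $\norm{w_{\epsilon,\xi_\epsilon}}_{H^1}\to 0$ together with $\xi_\epsilon\to x_0$ and the continuity of $\xi\mapsto U_{\lambda_\xi}$.

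The main obstacle I anticipate is controlling the nonlocal Bopp--Podolsky term uniformly in $\epsilon$. One must show that $u\mapsto\phi_{\epsilon,u^2}$ admits estimates, in the $\epsilon$-scaled norms, that make the quartic term and its linearization genuinely subordinate to the $V$-perturbation, so that it neither destroys the uniform invertibility of the linearized operator on the orthogonal complement nor alters the leading-order shape of $\Phi_\epsilon$. This hinges on careful bounds for the Green kernel of $\Delta^2-\Delta$ and on the exponential decay \eqref{Equation:ExponentialDecay} of $U$.
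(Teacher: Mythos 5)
Your overall strategy is the same as the paper's: reduce \eqref{SBP_epsilon} to a single nonlocal equation, observe that the Bopp--Podolsky term enters at order $\epsilon^3$ and is therefore subordinate to the order-$\epsilon^2$ perturbation coming from $V$, run a Lyapunov--Schmidt reduction around translates of the ground state, and locate a critical point of the reduced functional near $x_0$. The genuine gap is in your final step, where you claim that \eqref{Equation:Concentration} follows from $\norm{w_{\epsilon,\xi_\epsilon}}_{H^1}\to 0$ together with $\xi_\epsilon\to x_0$ and the continuity of $\xi\mapsto U_{\lambda_\xi}$. This is not sufficient. The profiles $U_{\lambda}\parens*{\parens{\cdot-\xi}/\epsilon}$ are concentrated at scale $\epsilon$, so two such bumps whose centers differ by a distance of order $\epsilon$ (or larger) remain at a fixed positive distance in the norm in which \eqref{Equation:Concentration} is a meaningful statement; what is actually needed is $\abs{\xi_\epsilon-x_0}=o\parens{\epsilon}$, not merely $\xi_\epsilon\to x_0$. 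Your expansion --- leading $\xi$-dependence proportional to $V\parens{\xi}$, with the usual error of relative size $\epsilon^{\mu}$, $\mu=\min\parens{1,p-1}\leq 1$ --- combined with the non-degeneracy of $x_0$ only localizes the critical point to $\abs{\xi_\epsilon-x_0}\lesssim\epsilon^{\mu}$, which is never $o\parens{\epsilon}$. (A secondary point: since $w_{\epsilon,\xi}$, and hence $\Phi_\epsilon$, is only of class $C^1$ in $\xi$, the implicit function theorem cannot be applied to $\nabla\Phi_\epsilon$; the degree argument you mention as an alternative is the one that works.)

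The paper closes exactly this gap by running the concentration argument in rescaled variables (after normalizing $x_0=0$, $V\parens{x_0}=1$): the pseudo-critical manifold consists of translates $z_\xi=U\parens{\cdot-\xi}$ of the \emph{fixed} limit profile, with $\xi$ ranging in a ball $B_\eta$ of the rescaled space, and Lemma \ref{Lemma:ExpansionNonDegenerate} provides the finer expansion
$\abs{\nabla\Phi_{\eta,\epsilon}\parens{\xi}-\epsilon^2\Gamma_1\parens{\xi}}\lesssim\epsilon^{2+\delta}$,
where $\Gamma_1$ is the linear map built from the Hessian of $V$ at $x_0$ (times a nonzero constant), hence injective by non-degeneracy. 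The degree argument then yields $\xi_\epsilon\in B_\eta$ with $\nabla\Phi_{\eta,\epsilon}\parens{\xi_\epsilon}=0$, and the same expansion forces $\abs{\Gamma_1\parens{\xi_\epsilon}}\lesssim\epsilon^{\delta}\to 0$, so $\xi_\epsilon\to 0$ \emph{in the rescaled frame}; this is precisely the $o\parens{\epsilon}$ localization of the concentration point that \eqref{Equation:Concentration} requires, and it is why the paper proves Theorem \ref{Theorem:SolutionsNonDegenerate} with the fixed-profile rescaled parametrization rather than with the $V\parens{\xi}$-type expansion, which it reserves for the multiplicity result (Theorem \ref{Theorem:MultiplicitySolutions}), where only a weaker notion of concentration is asserted. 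To repair your proof you would either have to reparametrize as the paper does, or refine your expansion of $\nabla\Phi_\epsilon$ near $x_0$ to an accuracy of $o\parens{\epsilon}$ after rescaling --- which amounts to the same computation.
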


Due to \eqref{Equation:Concentration}, we say that the family of solutions provided by Theorem \ref{Theorem:SolutionsNonDegenerate} is \emph{concentrated} around $x_0$. We remark that if a family of solutions to \eqref{SBP_epsilon} is concentrated around $x_0 \in \real^3$, then $\nabla V \parens{x_0}=0$ (see Proposition \ref{Proposition:Concentration}).

Our next result establishes a sufficient condition for the existence of a family of solutions concentrated around a degenerate critical point of $V$. Technically, this is done by following Ianni and Vaira's approach of comparing how many derivatives of $V$ and $K$ vanish at the same point.

Let us introduce further hypotheses on $V, K$ before proceeding to our next result. Consider the hypotheses
\begin{enumerate}[label={$\parens{V_\arabic*}$}]
\setcounter{enumi}{2}
\item \label{V_3}
$x_0$ is a critical point of $V$ and either
	\begin{itemize}
	\item
		\begin{itemize}
		\item
			$n\geq 4$ is an even integer;
		\item
			$V$ is of class $C^n$;
		\item
			$\norm{V}_{C^n}<\infty$;
		\item
			$
			\parens{
				\partial^n_1 V \parens{x_0},
				\partial^n_2 V \parens{x_0},
				\partial^n_3 V \parens{x_0}
			}
			\neq
			0
			$
			but the mixed partial derivatives of $V$ with order $n$ vanish at $x_0$ and
		\item
			$\Dif_{x_0}^l V=0$ for $l \in \set{1, \ldots, n - 1}$ or
		\end{itemize}
	\item
		$V$ is smooth; every derivative of $V$ is bounded and every derivative of $V$ vanishes at $x_0$, in which case we set $n=\infty$
	\end{itemize}
\end{enumerate}
and
\begin{enumerate}[label={$\parens{K_\arabic*}$}]
\setcounter{enumi}{2}
\item \label{K_2}
	either
	\begin{itemize}
	\item
		\begin{itemize}
		\item
			$m\geq 2$ is an even integer;
		\item
			$K$ is of class $C^m$;
		\item
			$\norm{K}_{C^m}<\infty$;
		\item
			$
			\parens{
				\partial^m_1 K \parens{x_0},
				\partial^m_2 K \parens{x_0},
				\partial^m_3 K \parens{x_0}
			}
			\neq
			0
			$
			but the mixed partial derivatives of $K$ with order $m$ vanish at $x_0$ and
		\item
			$\Dif_{x_0}^l K = 0$ for $l \in \set{0, \ldots, m - 1}$ or
		\end{itemize}
	\item
		$K$ is smooth; every derivative of $K$ is bounded and every derivative of $K$ vanishes at $x_0$, in which case we set $m=\infty$.
	\end{itemize}
\end{enumerate}

We can finally state our second result.

\begin{thm} \label{Theorem:SolutionsDegenerate}
Suppose that \ref{V_3}, \ref{K_2} hold and $\gamma:=\min\parens{n,2m+3}<\infty$. If $n>2m+3$, then suppose further that $\ker \Dif_{x_0} g = 0$, where $g \colon \real^3 \to \real^3$ is defined as
\[
g\parens{\xi}
=
\frac{1}{\parens{m!}^2}
\sum_{
	\substack{0 \leq \alpha, \beta \leq m; \\ 1 \leq i, j, l \leq 3}
} \brackets*{
	\partial_j^m K\parens{x_0}
	\partial_l^m K\parens{x_0}
	\binom{m}{\alpha}
	\binom{m}{\beta}
	\xi_j^{m-\alpha}
	\xi_l^{m-\beta}
	\tilde{C}_{\alpha,\beta,i,j,l}
}e_i
\]
and
\[
	\tilde{C}_{\alpha,\beta,i,j,l}
	:=
	\int\int
		x_j^\alpha y_l^\beta
		U_{\lambda_{x_0}} \parens{x}^2
		U_{\lambda_{x_0}} \parens{y}
		\partial_i U_{\lambda_{x_0}} \parens{y}
	\dif x\dif y
\]
for every $i, j, l \in \set{1, 2, 3}$;
$\alpha, \beta \in \set{0, \ldots, m}$. In this situation, the conclusion of Theorem \ref{Theorem:SolutionsNonDegenerate} holds once again.
\end{thm}

Analogously as in \cite[Remark 1.2]{IanniVaira2008}, the condition
\[
\parens{
	\partial^m_1 K \parens{x_0},
	\partial^m_2 K \parens{x_0},
	\partial^m_3 K \parens{x_0}
}
=
\parens{1, \delta, \delta}
\]
for $\delta \in \real$ with sufficiently small absolute value is sufficient to have $\ker \Dif_{x_0} g = 0$ in the context of Theorem \ref{Theorem:SolutionsDegenerate}.

Let us recall two notions before stating our multiplicity result. The first is that of (non-degenerate) critical manifolds.
\begin{defn}\label{Definition:CriticalManifold}
Let $X$ be a Banach space and let $I\in C^2\parens{X}$. We say that $Y\subset X$ is a \emph{critical manifold} of $I$ when $Y$ is a submanifold of $X$ and $\nabla I\parens{y}=0$ for every $y\in Y$. If $Y$ is a critical manifold of $I$, then it is said to be \emph{non-degenerate} when given $y\in Y$, $\Tan_yY=\ker\Dif^2_yI$ and
\[
X \ni x \mapsto \Dif^2_y I \parens{x, \cdot} \in X'
\]
is a Fredholm operator with index $0$.
\end{defn}

The second is the definition of the cup-length.

\begin{defn}
Suppose that $M \subset \real^3$. If $\check{H}^* \parens{M} = 0$, then the \emph{cup-length} of $M$ is defined as $\cupl \parens{M} = 1$, where $\check{H}^* \parens{M}$ denotes the Alexander--Spanier cohomology of $M$. Otherwise, we set
\[
	\cupl \parens{M} = \sup \set{
		k \in \nat :
		\exists \alpha_1, \ldots, \alpha_k \in \check{H}^* \parens{M};~
		\alpha_1 \cup \ldots \cup \alpha_k \neq 0
	},
\]
where $\cup$ denotes the cup product.
\end{defn}

In a similar spirit to \cite[Theorem 1.1]{MascaroSiciliano2023}, our last theorem estimates the multiplicity of concentrated solutions around points in a compact non-degenerate critical manifold of $V$ in function of its cup-length.
\begin{thm}\label{Theorem:MultiplicitySolutions}
If $M$ is a compact non-degenerate critical manifold of $V$ and $\epsilon\in\ooi{0,1}$ is sufficiently small, then \eqref{SBP_epsilon} has at least $\cupl\parens{M} + 1$ weak solutions that concentrate around points of $M$.
\end{thm}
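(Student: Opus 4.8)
The plan is to reduce \eqref{SBP_epsilon} to a single equation, perform a finite-dimensional Lyapunov--Schmidt reduction onto a manifold of concentrated profiles indexed by a tubular neighborhood of $M$, and then count the critical points of the resulting reduced functional by means of a cup-length estimate; the non-degeneracy of $M$ enters precisely in the final, topological step. First I would solve the Bopp--Podolsky equation for $\phi$: for each $u\in H^1$ the equation $\Delta^2\phi-\Delta\phi=4\pi K u^2$ has a unique solution $\phi_{\epsilon,u^2}$, whose existence and basic estimates come from the functional framework of \cite{dAveniaSiciliano2019}. Substituting it back yields a functional $I_\epsilon\in C^2\parens{H^1}$, of the schematic form
\[
	I_\epsilon\parens{u}
	=
	\frac{1}{2}\int\parens*{\epsilon^2\abs{\nabla u}^2+V u^2}
	+
	\frac{1}{4}\int K\,\phi_{\epsilon,u^2}\,u^2
	-
	\frac{1}{p+1}\int\abs{u}^{p+1},
\]
whose critical points are exactly the first components $u$ of weak solutions $\parens{u,\phi_{\epsilon,u^2}}$ of \eqref{SBP_epsilon}. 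The nonlocal term is nonnegative and, in the semiclassical regime, of lower order than the remaining terms.

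Next I would carry out the reduction around the natural manifold of approximate solutions, reusing the machinery behind Theorem \ref{Theorem:SolutionsNonDegenerate}. Fix a tubular neighborhood $M_\delta$ of $M$ in $\real^3$ and, for $\xi\in M_\delta$, set $z_{\epsilon,\xi}:=U_{\lambda_\xi}\parens*{\epsilon^{-1}\parens{\cdot-\xi}}$ with $\lambda_\xi:=V\parens{\xi}^{1/2}$, so that $\set{z_{\epsilon,\xi}:\xi\in M_\delta}$ is a pseudo-critical manifold, meaning $\norm{\nabla I_\epsilon\parens{z_{\epsilon,\xi}}}_{H^1}\to0$ uniformly in $\xi$ as $\epsilon\to0$. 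Because $U$ is non-degenerate as a solution of \eqref{Equation:SimpleProblem} --- the kernel of the linearized limit operator at $U_{\lambda_\xi}$ being spanned exactly by the translations $\partial_1 U_{\lambda_\xi},\partial_2 U_{\lambda_\xi},\partial_3 U_{\lambda_\xi}$, which to leading order span the tangent space of the profile manifold --- the operator $\Dif^2 I_\epsilon\parens{z_{\epsilon,\xi}}$ is uniformly invertible on the subspace of $H^1$ normal to the manifold. The implicit function theorem then produces a $C^1$ map $M_\delta\ni\xi\mapsto w_\epsilon\parens{\xi}\in H^1$ with $\norm{w_\epsilon\parens{\xi}}_{H^1}\to0$ uniformly, such that $u=z_{\epsilon,\xi}+w_\epsilon\parens{\xi}$ solves \eqref{SBP_epsilon} if and only if $\xi$ is a critical point of the reduced functional $\Phi_\epsilon\parens{\xi}:=I_\epsilon\parens{z_{\epsilon,\xi}+w_\epsilon\parens{\xi}}$.

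Finally I would analyse $\Phi_\epsilon$ on $M_\delta$ and conclude. Expanding the energy along the profiles and using the decay \eqref{Equation:ExponentialDecay} together with the lower-order estimate on the nonlocal term, one obtains, in the $C^1$ topology over $M_\delta$,
\[
	\Phi_\epsilon\parens{\xi}
	=
	\epsilon^3\parens*{c_0+c_1\,\Gamma\parens{V\parens{\xi}}}
	+
	o\parens*{\epsilon^3}
\]
for constants $c_0\in\real$, $c_1\neq0$ and a smooth strictly monotone $\Gamma$; hence $\epsilon^{-3}\parens{\Phi_\epsilon-c_0\epsilon^3}$ converges in $C^1\parens{M_\delta}$ to $c_1\,\Gamma\circ V$, whose critical set meets $M_\delta$ exactly in $M$ and for which $M$ is a compact non-degenerate critical manifold (this is where the hypothesis on $M$ is used, since $\nabla V=0$ on $M$ forces the Hessian of $\Gamma\circ V$ there to share the kernel of $\Dif^2 V$). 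By the perturbation theory for functionals $C^1$-close to one admitting a compact non-degenerate critical manifold --- the same Lusternik--Schnirelmann/cup-length count employed in \cite{AmbrosettiMalchiodiSecchi2001} and \cite{MascaroSiciliano2023} --- for every sufficiently small $\epsilon\in\ooi{0,1}$ the function $\Phi_\epsilon$ has at least $\cupl\parens{M}+1$ critical points in $M_\delta$. Each such $\xi_\epsilon$ yields a weak solution $\parens{u_\epsilon,\phi_{\epsilon,u_\epsilon^2}}$ with $u_\epsilon=z_{\epsilon,\xi_\epsilon}+w_\epsilon\parens{\xi_\epsilon}$; since $\xi_\epsilon\to M$ and $\norm{w_\epsilon\parens{\xi_\epsilon}}_{H^1}\to0$, these solutions concentrate around points of $M$ in the sense of \eqref{Equation:Concentration}.

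I expect the main obstacle to be the uniform control of the nonlocal Bopp--Podolsky term. One must establish regularity and decay estimates for $\phi_{\epsilon,u^2}$ that are uniform in $\epsilon$ and strong enough both to make $\set{z_{\epsilon,\xi}}$ a genuine pseudo-critical manifold and to guarantee that this term enters $\Phi_\epsilon$ only at order $o\parens{\epsilon^3}$ in the $C^1$ topology, so that the location and non-degeneracy of the critical points of $\Phi_\epsilon$ near $M$ are dictated solely by $V$. Propagating these estimates through the implicit-function-theorem step --- in particular verifying that $\Phi_\epsilon$ is $C^1$-close, and not merely $C^0$-close, to $c_1\,\Gamma\circ V$ --- is the delicate part, and it is also what separates the present non-degenerate count from the degenerate balance $\gamma=\min\parens{n,2m+3}$ governing Theorem \ref{Theorem:SolutionsDegenerate}.
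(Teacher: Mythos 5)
Your proposal is correct and follows essentially the same route as the paper: a Lyapunov--Schmidt reduction around the profiles $U_{\lambda_\xi}\parens{\cdot-\xi}$, the observation that the Bopp--Podolsky nonlocal term enters only at higher order (order $\epsilon^3$ in the paper's rescaled variables), a $C^1$-expansion of the reduced functional with leading term $C_0 V^\theta$ (your $c_1\,\Gamma\circ V$), the cup-length perturbation theorem of Ambrosetti--Malchiodi--Secchi applied on a neighborhood of $M$, and concentration deduced from the fact that limit points of the $\xi_\epsilon$ are critical points of $V$, hence lie in $M$. The only differences are cosmetic: you work in the unscaled variables and index the pseudo-critical manifold by a tubular neighborhood $M_\delta$, whereas the paper rescales $x\mapsto\epsilon x$ and indexes by all of $\real^3$, introducing the neighborhood only when applying the perturbation theorem.
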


Similarly as in \cite{AmbrosettiMalchiodi2006, IanniVaira2008}, our results are obtained through an application of the classical Lyapunov--Schmidt reduction technique (for a quick overview, see Section \ref{LyapunovSchmidt}). To the best of our knowledge, this is the first time that this approach was considered to construct solutions to the Schrödinger--Bopp--Podolsky system. In order to avoid unnecessary repetition of non-crucial arguments, our reasoning along the text relies upon results in the aforementioned references.

We remark that Theorem \ref{Theorem:MultiplicitySolutions} is analogous to the proposed generalization of \cite[Theorem 1.1]{IanniVaira2008} in the last paragraph of \cite[Section 4]{IanniVaira2008} (see also \cite[Theorem 8.5]{AmbrosettiMalchiodi2006}). Theorems \ref{Theorem:SolutionsNonDegenerate}, \ref{Theorem:SolutionsDegenerate} are respectively analogous to \cite[Theorems 4.1, 4.2]{IanniVaira2008}, whose proofs are sketched in \cite[p. 593]{IanniVaira2008}.

Let us introduce a notation before proceeding to more technical considerations. If $f \colon \real^3 \to \ooi{0,\infty}$ satisfies
$0 < \inf f \leq \sup f < \infty$,
then we define the Hilbert space $H^1_f$ as the completion of $C_c^\infty$ with respect to
\[
	\angles{u \mid w}_{H^1_f}
	:=
	\int\parens{\nabla u \cdot \nabla w + f u w}.
\]
Note that it is easy to check that $\norm{\cdot}_{H^1_f}$ is equivalent to $\norm{\cdot}_{H^1}$, so we will canonically identify $H^1_f$ with $H^1$.

We explain in Section \ref{VariationalFramework} that in order to obtain weak solutions to \eqref{SBP_epsilon}, it suffices to search for critical points of the \emph{energy functional} $E_\epsilon \colon H^1 \to \real$ given by
\begin{equation}\label{Equation:EnergyFunctional}
	E_\epsilon\parens{u}
	=
	\frac{1}{2} \norm{u}_{H^1_{V_\epsilon}}^2
	+
	\frac{\epsilon^3}{4} \int \parens{
		K_\epsilon \phi_{\epsilon, u^2} u^2
	}	
	-
	\frac{1}{p+1} \norm{u}_{L^{p+1}}^{p+1},
\end{equation}
where
\begin{equation}\label{Equation:phi}
	\phi_{\epsilon, u w}:=\parens{K_\epsilon u w} \ast \kappa_\epsilon
\end{equation}
for every $u, w \in H^1$; $\ast$ denotes the convolution;
$
	\kappa\parens{x} := \abs{x}^{-1}\brackets{1-\exp\parens{-\abs{x}}}
$
for every $x\in\real^3\setminus\set{0}$;
$V_\epsilon := V \parens{\epsilon \cdot}$ and similarly for $K_\epsilon$, $\kappa_\epsilon$. Similarly, critical points of the functionals $I_\epsilon, J_\epsilon \in C^2\parens{H^1}$ given by
\[
	I_\epsilon\parens{u}
	=
	\frac{1}{2} \norm{u}_{H^1_{V_\epsilon}}^2	
	-
	\frac{1}{p+1} \norm{u}_{L^{p+1}}^{p+1};
\]
\[
	J_\epsilon\parens{u}
	=
	\frac{1}{2} \norm{u}_{H^1_{V_\epsilon}}^2
	+
	\frac{\epsilon^2}{4}
	\int \cbrackets*{
		K_\epsilon
		\brackets*{\parens{K_\epsilon u^2} \ast \abs{\cdot}^{-1}}
		u^2
	}
	-
	\frac{1}{p+1} \norm{u}_{L^{p+1}}^{p+1}
\]
are respectively associated to weak solutions to (\ref{NLS_epsilon}; \ref{SPS_epsilon}).

On one hand, we can interpret both functionals $E_\epsilon$ and $J_\epsilon$ as perturbations of $I_\epsilon$. On the other hand, unlike $\abs{\cdot}^{-1}$, $\kappa$ is bounded and not $(-1)$-homogeneous, so $E_\epsilon$ is obtained from $I_\epsilon$ by summing a perturbation of order $\epsilon^3$ (see Lemma \ref{Lemma:FourTerms}), while $J_\epsilon$ is obtained by summing a perturbation of order $\epsilon^2$. This discrepancy is the reason why the sufficient condition at Theorem \ref{Theorem:SolutionsDegenerate} differs from that in \cite[Theorem 4.2]{IanniVaira2008}. Indeed, it follows from the proof of Lemma \ref{Lemma:ExpansionDegenerate} that we have to compare $n$ with $2m+3$, so the case $n=2m+3$ is automatically ruled out. The analogous comparison in \cite[Theorem 4.2]{IanniVaira2008} is done between $n$ and $2m+2$, so one also has to consider the case $n=2m+2$.

Let us finish with a comment on the organization of the text. The preliminaries are done in Section \ref{Preliminaries}: we explain our notion of weak solutions; we develop a variational characterization for them; we do a quick review of the Lyapunov--Schmidt reduction and we highlight a related problem. In Section \ref{MultiplicityResult}, we develop the details of the Lyapunov--Schmidt reduction to prove Theorem \ref{Theorem:MultiplicitySolutions}. Similarly, Section \ref{ConcentrationResults} is concerned with Theorems \ref{Theorem:SolutionsNonDegenerate} and \ref{Theorem:SolutionsDegenerate}.

\subsection*{Notation}

Given functions $f,g\colon X\to[0,\infty[$, we write $f\parens{x}\lesssim g\parens{x}$ for every $x\in X$
when there exists $C\in\ooi{0,\infty}$ such that $f\parens{x}\leq Cg\parens{x}$ for every $x\in X$. We denote the canonical basis of $\real^3$ by $e_1, e_2, e_3 \in \real^3$. Given $r\in\ooi{0,\infty}$, we define $B_r=\set{x\in\real^3: \abs{x}<r}$. We only consider functional spaces over $\real^3$, so the considered domain is omitted from the notation. Likewise, integrals are implied to be taken in $\real^3$ unless denoted otherwise. The functional spaces $H^1$ and $\mathcal{X}$ are obtained as the respective Hilbert space completions of $C_c^\infty$ with respect to the inner products
\[
	\angles{u \mid v}_{H^1}
	:=
	\int\parens{
		\nabla u\cdot\nabla v
		+
		uv
	}
	\quad
	\text{and}
	\quad
	\angles{u \mid v}_{\mathcal{X}}
	:=
	\int\parens{
		\Delta u\Delta v
		+
		\nabla u\cdot\nabla v
	}.
\]

\subsection*{Acknowledgements}

This study was financed in part by the Coordenação de Aperfeiçoamento de Pessoal de Nível Superior - Brasil (CAPES) - Finance Code 001.


\section{Preliminaries}\label{Preliminaries}
\subsection{Variational framework} \label{VariationalFramework}

Let us begin with some considerations about \eqref{Equation:phi}.
\begin{rmk} \label{Remark:phi}
Consider a fixed $\epsilon \in \ooi{0, 1}$. We have
\begin{itemize}
\item
$\phi_{\epsilon, u w} \in \mathcal{X}$ for every $u, w \in H^1$ and
\item
the application
\[
	\parens{u_1, u_2, u_3, u_4} \in \parens{H^1}^4
	\mapsto
	\int \parens{
		K_\epsilon \phi_{\epsilon, u_1 u_2} u_3 u_4
	}
	=
	\int \parens{
		K_\epsilon \phi_{\epsilon, u_3 u_4} u_1 u_2
	}
\]
is multilinear.
\end{itemize}
\end{rmk}

We proceed to a discussion about our notion of weak solutions. Given $u \in H^1$, we define a \emph{weak solution} to
\begin{equation}\label{Equation:Electrostatic}
	\Delta^2\phi-\Delta\phi=4\pi K u^2~\text{in}~\real^3
\end{equation}
as a $\phi\in\mathcal{X}$ such that
\[
	\int \parens{
		\Delta \phi \Delta w + \nabla \phi \cdot \nabla w
	}
	=
	4 \pi \int \parens{K u^2 w}
\]
for every $w\in C_c^\infty$. Analogously, we say that $\parens{u,\phi}$ is a \emph{weak solution} to \eqref{SBP_epsilon} when $u\in H^1$, $\phi$ is a weak solution to \eqref{Equation:Electrostatic} and
\[
	\int \brackets{
		\epsilon^2 \nabla u \cdot \nabla w + \parens{V + K \phi} u w
	}
	=
	\int \parens{u \abs{u}^{p-1} w}
\]
for every $w\in C_c^\infty$.

It follows from the Riesz Theorem that $\phi_{1,u^2}$ is the unique weak solution to \eqref{Equation:Electrostatic}. Therefore, we naturally associate solutions to
\[
	\begin{cases}
		\int \brackets{
			\epsilon^2 \nabla u \cdot \nabla w
			+
			\parens{V + K \phi_{1,u^2}} u w
		}
		-
		\int \parens{u \abs{u}^{p-1} w}
		=
		0
		~\text{for every}~
		w\in C_c^\infty;
		\\
		u\in H^1
	\end{cases}
\]
to weak solutions to \eqref{SBP_epsilon}. By considering the change of variable $x\mapsto \epsilon x$, we see that the previous problem is equivalent to
\begin{multline}\label{Equation:WeakSolution}
	\begin{cases}
		\angles{u \mid w}_{H^1_{V_\epsilon}}
		+
		\int \parens{
			\epsilon^3 K_\epsilon \phi_{\epsilon,u^2} u w
		}
		-
		\int \parens{u \abs{u}^{p-1} w}
		=
		0
		~\text{for every}~
		w\in C_c^\infty;
		\\
		u \in H^1.
	\end{cases}
\end{multline}
In view of this discussion, we will restrict ourselves to search for solutions to \eqref{Equation:WeakSolution} along the text.

The following estimate follows from Hölder's Inequality and the Sobolev embeddings $\mathcal{X},H^1\hookrightarrow L^6$; $H^1\hookrightarrow L^{3/2}$.
\begin{lem}\label{Lemma:FourTerms}
We have
\[
	\int\abs{
		K_\epsilon\phi_{\epsilon,u_1 u_2}u_3u_4
	}
	\lesssim
	\norm{u_1}_{H^1}
	\norm{u_2}_{H^1}
	\norm{u_3}_{H^1}
	\norm{u_4}_{H^1}
\]
for every $u_1, \ldots, u_4 \in H^1$ and $\epsilon \in \ooi{0,1}$.
\end{lem}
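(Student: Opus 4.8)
The plan is to bypass any Sobolev estimate on $\phi_{\epsilon, u_1 u_2}$ and instead control it in $L^\infty$, reducing the whole four-linear expression to a product of $L^2$ norms by the Cauchy--Schwarz inequality. The single point that must be watched throughout is that the implicit constant stays bounded as $\epsilon \to 0$, since the statement asserts uniformity in $\epsilon \in \ooi{0, 1}$.

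First I would record two elementary facts. Since $K$ is bounded by \ref{K_1}, we have $\norm{K_\epsilon}_{L^\infty} = \norm{K}_{L^\infty}$ for every $\epsilon \in \ooi{0, 1}$. Moreover, the inequality $1 - e^{-t} \le t$ for $t \ge 0$ gives $0 \le \kappa \le 1$ on $\real^3 \setminus \set{0}$, and hence $0 \le \kappa_\epsilon \le 1$ uniformly in $\epsilon$. This second fact is the crux of the argument: it furnishes an $\epsilon$-independent bound on the convolution kernel.

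Next I would estimate the potential in $L^\infty$ by Young's inequality,
\[
	\norm{\phi_{\epsilon, u_1 u_2}}_{L^\infty}
	=
	\norm*{\parens{K_\epsilon u_1 u_2} \ast \kappa_\epsilon}_{L^\infty}
	\le
	\norm{K_\epsilon u_1 u_2}_{L^1} \norm{\kappa_\epsilon}_{L^\infty}
	\le
	\norm{K}_{L^\infty} \norm{u_1}_{L^2} \norm{u_2}_{L^2},
\]
where the last step uses $\norm{\kappa_\epsilon}_{L^\infty} \le 1$ together with Cauchy--Schwarz applied to $\norm{u_1 u_2}_{L^1}$. Then Hölder's inequality, pairing $K_\epsilon \phi_{\epsilon, u_1 u_2} \in L^\infty$ against $u_3 u_4 \in L^1$, followed once more by Cauchy--Schwarz, yields
\[
	\int \abs*{K_\epsilon \phi_{\epsilon, u_1 u_2} u_3 u_4}
	\le
	\norm{K}_{L^\infty} \norm{\phi_{\epsilon, u_1 u_2}}_{L^\infty} \norm{u_3}_{L^2} \norm{u_4}_{L^2}
	\le
	\norm{K}_{L^\infty}^2 \prod_{i = 1}^4 \norm{u_i}_{L^2},
\]
and the claim follows from $\norm{\cdot}_{L^2} \le \norm{\cdot}_{H^1}$, with an implicit constant equal to $\norm{K}_{L^\infty}^2$, independent of $\epsilon$ and of $u_1, \ldots, u_4$.

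The main obstacle is precisely this uniformity in $\epsilon$, and it is the reason I would not route the argument through the embedding $\mathcal{X} \hookrightarrow L^6$. As $\epsilon \to 0$ one has $\kappa_\epsilon \to 1$ pointwise, so $\phi_{\epsilon, u_1 u_2}$ converges pointwise to a generically nonzero constant and $\norm{\phi_{\epsilon, u_1 u_2}}_{L^6}$ diverges; any split that first estimates $\phi_{\epsilon, u_1 u_2}$ in $L^6$ would therefore produce an $\epsilon$-dependent constant. Exploiting the bound $\kappa_\epsilon \le 1$ to place $\phi_{\epsilon, u_1 u_2}$ in $L^\infty$ is exactly what keeps the estimate uniform, while every remaining step is a routine application of Hölder's inequality and the embedding $H^1 \hookrightarrow L^2$.
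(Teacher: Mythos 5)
Your proof is correct, and it takes a genuinely different route from the paper's. The paper disposes of this lemma in one sentence, invoking H\"older's inequality together with the Sobolev embeddings $\mathcal{X}, H^1 \hookrightarrow L^6$ and the statement ``$H^1 \hookrightarrow L^{3/2}$'' (to be read as: products of two $H^1$ functions lie in $L^{3/2}$), i.e.\ it controls the potential $\phi_{\epsilon, u_1 u_2}$ through the $\mathcal{X}$-framework of Remark \ref{Remark:phi} and then pairs it against $u_3 u_4$ in a dual Lebesgue space. You never leave the convolution representation: the uniform bound $0 \le \kappa_\epsilon \le 1$, the elementary Young bound $\norm{f \ast g}_{L^\infty} \le \norm{f}_{L^1} \norm{g}_{L^\infty}$, and Cauchy--Schwarz give
\[
	\norm{\phi_{\epsilon, u_1 u_2}}_{L^\infty}
	\le
	\norm{K}_{L^\infty} \norm{u_1}_{L^2} \norm{u_2}_{L^2},
\]
whence the claim with the explicit constant $\norm{K}_{L^\infty}^2$, using only $H^1 \hookrightarrow L^2$. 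What your route buys is precisely what you emphasize: manifest uniformity in $\epsilon$. Your critique of the $L^6$ route is sound --- since $\kappa_\epsilon \to 1$ pointwise, $\phi_{\epsilon, u_1 u_2}$ tends pointwise to the constant $K\parens{0} \int u_1 u_2$, so by Fatou $\norm{\phi_{\epsilon, u_1 u_2}}_{L^6} \to \infty$ whenever that constant is nonzero; likewise no $L^q$ bound on $\kappa_\epsilon$ with $q < \infty$ is uniform in $\epsilon$. Hence any argument that first places $\phi_{\epsilon, u_1 u_2}$ in $L^6$ (the most literal reading of the paper's one-line justification) must be organized with care to avoid an $\epsilon$-dependent constant, whereas your estimate is airtight as written. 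It is also conceptually aligned with the paper's own structural remark that, unlike $\abs{\cdot}^{-1}$, the kernel $\kappa$ is bounded --- the very fact that makes the Bopp--Podolsky term a perturbation of order $\epsilon^3$ --- so your argument makes the mechanism behind the lemma explicit rather than hiding it in embedding constants.
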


The regularity of $E_\epsilon$ follows as a corollary.
\begin{lem}\label{Lemma:EnergyClassC2}
The functional $E_\epsilon$ is of class $C^2$. Moreover,
\[
	\Dif_u E_\epsilon \parens{w_1}
	=
	\angles{u \mid w_1}_{H^1_{V_\epsilon}}
	+
	\epsilon^3 \int \parens{
		K_\epsilon \phi_{\epsilon,u^2} u w
	}
	-
	\int\parens{u \abs{u}^{p-1} w_1}
\]
and
\begin{multline*}
	\Dif_u^2 E_\epsilon \parens{w_1,w_2}
	=
	\angles{w_1 \mid w_2}_{H^1_{V_\epsilon}}
	+
	\epsilon^3 \int \brackets*{
			K_\epsilon \parens{
				\phi_{\epsilon, u^2} w_1 w_2
				+
				2 \phi_{\epsilon, u w_1} u w_2
	}}
	+
	\\
	-
	p \int \parens*{\abs{u}^{p-1} w_1 w_2}
\end{multline*}
for every $u, w_1, w_2 \in H^1$.
\end{lem}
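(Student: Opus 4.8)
The plan is to write $E_\epsilon$ as a sum of three functionals and establish the $C^2$ regularity of each separately. Decompose $E_\epsilon = Q_\epsilon + N_\epsilon - P_\epsilon$, where $Q_\epsilon\parens{u} = \frac{1}{2}\norm{u}_{H^1_{V_\epsilon}}^2$, $N_\epsilon\parens{u} = \frac{\epsilon^3}{4}\int\parens{K_\epsilon \phi_{\epsilon, u^2} u^2}$ and $P_\epsilon\parens{u} = \frac{1}{p+1}\norm{u}_{L^{p+1}}^{p+1}$. The quadratic form $Q_\epsilon$ is manifestly $C^\infty$, with $\Dif_u Q_\epsilon\parens{w_1} = \angles{u \mid w_1}_{H^1_{V_\epsilon}}$ and constant second derivative $\Dif_u^2 Q_\epsilon\parens{w_1, w_2} = \angles{w_1 \mid w_2}_{H^1_{V_\epsilon}}$, so it contributes exactly the $H^1_{V_\epsilon}$-terms in the stated formulas.

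For $N_\epsilon$, the key observation is that Remark \ref{Remark:phi} exhibits
\[
	T\parens{u_1, u_2, u_3, u_4} := \int\parens{K_\epsilon \phi_{\epsilon, u_1 u_2} u_3 u_4}
\]
as a symmetric $4$-linear form on $H^1$ — symmetric under $u_1 \leftrightarrow u_2$, under $u_3 \leftrightarrow u_4$ and under the block transposition $\parens{u_1, u_2} \leftrightarrow \parens{u_3, u_4}$ — while Lemma \ref{Lemma:FourTerms} shows that $T$ is bounded. Hence $N_\epsilon\parens{u} = \frac{\epsilon^3}{4} T\parens{u, u, u, u}$ is the restriction to the diagonal of a bounded symmetric multilinear form, which is automatically $C^\infty$, its derivatives being obtained by the product rule and then simplified using the above symmetries. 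First I would differentiate to obtain $\Dif_u N_\epsilon\parens{w_1} = \epsilon^3 T\parens{u, u, u, w_1} = \epsilon^3\int\parens{K_\epsilon \phi_{\epsilon, u^2} u w_1}$; differentiating once more gives $\epsilon^3\brackets{2 T\parens{w_2, u, u, w_1} + T\parens{u, u, w_2, w_1}}$, and the symmetry relations rewrite $T\parens{u, u, w_2, w_1} = \int K_\epsilon \phi_{\epsilon, u^2} w_1 w_2$ and $T\parens{w_2, u, u, w_1} = \int K_\epsilon \phi_{\epsilon, u w_1} u w_2$, yielding the claimed expression $\epsilon^3\int\brackets{K_\epsilon\parens{\phi_{\epsilon, u^2} w_1 w_2 + 2\phi_{\epsilon, u w_1} u w_2}}$.

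Finally, $P_\epsilon$ is the standard $L^{p+1}$ nonlinearity: since $p + 1 \in \ooi{2, 6}$, the Sobolev embedding $H^1 \hookrightarrow L^{p+1}$ applies, and because $t \mapsto \abs{t}^{p+1}$ is of class $C^2$ on $\real$ with derivatives $t \mapsto \parens{p+1} t\abs{t}^{p-1}$ and $t \mapsto p\parens{p+1}\abs{t}^{p-1}$ (both continuous, as $p - 1 > 0$), the associated Nemytskii operators are continuous; the usual argument then gives $P_\epsilon \in C^2\parens{H^1}$ with $\Dif_u P_\epsilon\parens{w_1} = \int\parens{u\abs{u}^{p-1} w_1}$ and $\Dif_u^2 P_\epsilon\parens{w_1, w_2} = p\int\parens{\abs{u}^{p-1} w_1 w_2}$. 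Summing the three contributions yields the stated formulas. The only genuinely analytic point is the continuity of the second derivative of $P_\epsilon$, which follows from the continuity of $t \mapsto \abs{t}^{p-1}$ together with a dominated-convergence argument; the quartic term $N_\epsilon$, by contrast, is purely algebraic once Lemma \ref{Lemma:FourTerms} supplies the boundedness of $T$, which is precisely why the regularity of $E_\epsilon$ is a corollary of that lemma.
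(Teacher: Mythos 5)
Your proposal is correct and is essentially the paper's argument: the paper states this lemma without a written proof, asserting only that the regularity of $E_\epsilon$ ``follows as a corollary'' of Lemma \ref{Lemma:FourTerms}, and your decomposition — quadratic part, the quartic nonlocal term handled as the diagonal restriction of the bounded multilinear form of Remark \ref{Remark:phi} and Lemma \ref{Lemma:FourTerms}, and the standard Nemytskii argument for the $L^{p+1}$ term — is precisely the intended filling-in of that assertion. Your use of the symmetries of $T$ (including the block transposition from Remark \ref{Remark:phi}) to reduce the second derivative to the stated form $\epsilon^3\int\brackets{K_\epsilon\parens{\phi_{\epsilon,u^2}w_1w_2+2\phi_{\epsilon,uw_1}uw_2}}$ is exactly right.
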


Due to the previous lemma, we can characterize solutions to \eqref{Equation:WeakSolution} as critical points of $E_\epsilon$. To finish, we list some elementary inequalities that will be often used throughout the text.
\begin{lem}\label{Lemma:Elementary}
We have
\[
	\abs*{
		\parens{a + b}^q - a^q - q a^{q-1} b
	}
	\lesssim
	\begin{cases}
		\abs{b}^q			&\text{if}~q \leq 2;
		\\
		\abs{b}^2+\abs{b}^q &\text{if}~q > 2;
	\end{cases}
\]
\begin{multline*}
\abs*{
	\parens{a + b_1}^q
	-
	\parens{a + b_2}^q
	-
	q a^{q-1} \parens{b_1 - b_2}
}
\lesssim
\\
\lesssim
\begin{cases}
	\abs{b_1 - b_2}
	\parens{\abs{b_1}^{q-1} + \abs{b_2}^{q-1}}
	&\text{if}~q \leq 2;
	\\
	\abs{b_1 - b_2}
	\parens{
		\abs{b_1}^{q-1}
		+
		\abs{b_2}^{q-1}
		+
		\abs{b_1}
		+
		\abs{b_2}
	}
	&\text{if}~q > 2;\end{cases}
\end{multline*}
and
\[
	\abs*{
		\parens{a + b}^{q-1} - a^{q-1}
	}
	\lesssim
	\begin{cases}
		\abs{b}^{q-1}			&\text{if}~q \leq 2;
		\\
		\abs{b} + \abs{b}^{q-1} &\text{if}~q > 2
	\end{cases}
\]
for every $a \in \cci{-1, 1}$ and $b, b_1, b_2 \in \real$.
\end{lem}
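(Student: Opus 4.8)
The plan is to reduce all three estimates to a single elementary inequality for the increment of the power function and then integrate. Throughout I interpret the non-integer powers in the statement through the odd extension $t \mapsto t\abs{t}^{q-1}$ of the nonlinearity, whose derivative is the even power $t \mapsto q\abs{t}^{q-1}$; with this convention the symbol $a^{q-1}$ stands for $\abs{a}^{q-1}$, which is exactly the first-order Taylor coefficient. The basic tool I would record first is the pointwise increment estimate for $r \mapsto \abs{r}^\alpha$: for $\alpha \in \ooi{0,1}$ one has $\abs{\abs{s}^\alpha - \abs{t}^\alpha} \leq \abs{s - t}^\alpha$ (subadditivity of the concave map $r \mapsto r^\alpha$ on $\coi{0,\infty}$), while for $\alpha \geq 1$ the mean value theorem gives $\abs{\abs{s}^\alpha - \abs{t}^\alpha} \lesssim \abs{s - t}\parens{\abs{s}^{\alpha - 1} + \abs{t}^{\alpha - 1}}$.

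The third inequality is the cornerstone, so I would establish it first by applying the tool with $\alpha = q - 1$ (here $q > 1$), $s = a + b$ and $t = a$. When $q \leq 2$ we have $\alpha \in \ooi{0,1}$ and obtain the bound $\abs{b}^{q-1}$ directly. When $q > 2$ we have $\alpha > 1$ and obtain $\abs{b}\parens{\abs{a+b}^{q-2} + \abs{a}^{q-2}}$; this is the only place where $a \in \cci{-1,1}$ enters, since $q - 2 \geq 0$ forces $\abs{a}^{q-2} \leq 1$ and $\abs{a+b}^{q-2} \leq \parens{1 + \abs{b}}^{q-2} \lesssim 1 + \abs{b}^{q-2}$, whence the claimed $\abs{b} + \abs{b}^{q-1}$. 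It is exactly this use of $\abs{a} \leq 1$ that makes all the constants uniform in $a$.

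With the third inequality in hand, the first follows by setting $g\parens{b} := \parens{a+b}^q - a^q - q a^{q-1} b$, observing $g\parens{0} = 0$ and $g'\parens{s} = q\brackets{\parens{a+s}^{q-1} - a^{q-1}}$, and integrating: $\abs{g\parens{b}} \leq \int_0^{\abs{b}} \abs{g'} \lesssim \int_0^{\abs{b}} s^{q-1}\dif s \lesssim \abs{b}^q$ when $q \leq 2$, and $\lesssim \int_0^{\abs{b}} \parens{s + s^{q-1}}\dif s \lesssim \abs{b}^2 + \abs{b}^q$ when $q > 2$. The second inequality is obtained the same way, by integrating $s \mapsto q\brackets{\parens{a+s}^{q-1} - a^{q-1}}$ over the interval between $b_2$ and $b_1$, which has length $\abs{b_1 - b_2}$: on that interval one bounds $\abs{s} \leq \max\parens{\abs{b_1}, \abs{b_2}}$ and uses $\max\parens{\abs{b_1}, \abs{b_2}}^\beta \leq \abs{b_1}^\beta + \abs{b_2}^\beta$ for the exponents $\beta = q - 1$ and (in the case $q > 2$) $\beta = 1$, which produces precisely the two stated right-hand sides.

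The only genuine subtlety, and the step I expect to require the most care, is the regime $q \leq 2$. There the second derivative of $t \mapsto t\abs{t}^{q-1}$ is singular at the origin, so a naive second-order Taylor expansion is unavailable and the remainder cannot be controlled by $\abs{b}^2$; working instead at first order through the Hölder continuity of the derivative, as above, is what yields the correct exponents $\abs{b}^q$ and $\abs{b_1 - b_2}\parens{\abs{b_1}^{q-1} + \abs{b_2}^{q-1}}$ rather than a spurious quadratic term. By contrast the regime $q > 2$ is routine: the derivative is Lipschitz on bounded sets, the estimates are quantified Taylor expansions, and $a \in \cci{-1,1}$ is used only to absorb the lower-order powers of $a$ into the implicit constants.
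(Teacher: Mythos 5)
Your proof is correct. Note that the paper itself offers no proof of this lemma --- it is stated as a list of ``elementary inequalities'' and used as a black box --- so there is nothing to compare against; your argument is a clean and complete way to fill that gap. The two pillars both check out: the increment bound $\abs{\abs{s}^\alpha-\abs{t}^\alpha}\leq\abs{s-t}^\alpha$ for $\alpha\in\oci{0,1}$ (subadditivity of concave powers) and the mean-value bound $\abs{\abs{s}^\alpha-\abs{t}^\alpha}\lesssim\abs{s-t}\parens{\abs{s}^{\alpha-1}+\abs{t}^{\alpha-1}}$ for $\alpha\geq 1$, after which the first two estimates follow by integrating $s\mapsto q\brackets{\abs{a+s}^{q-1}-\abs{a}^{q-1}}$ over $\cci{0,b}$, respectively over the interval between $b_2$ and $b_1$, exactly as you do; the bound $\max\parens{\abs{b_1},\abs{b_2}}^\beta\leq\abs{b_1}^\beta+\abs{b_2}^\beta$ is trivially valid. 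You are also right to flag the interpretive point: for $a<0$ and non-integer $q$ the expressions $a^q$, $a^{q-1}$ only make sense through the odd extension $t\mapsto t\abs{t}^{q-1}$ with derivative $q\abs{t}^{q-1}$, and your convention $a^{q-1}=\abs{a}^{q-1}$ is the one consistent with how the lemma is invoked in the paper (where the nonlinearity is $u\abs{u}^{p-1}$ and the base point $z_{\epsilon,\xi}$ is positive, so the ambiguity is harmless in practice). Your closing observation is likewise accurate: the hypothesis $a\in\cci{-1,1}$ is needed only in the regime $q>2$ to absorb $\abs{a}^{q-2}$ and $\parens{1+\abs{b}}^{q-2}$ into uniform constants, while for $q\leq 2$ the estimates hold for all $a$ via the H\"older continuity of the derivative, and avoiding a second-order Taylor expansion there is indeed essential since the remainder cannot be controlled by $\abs{b}^2$ when $q<2$.
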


\subsection{Lyapunov--Schmidt reduction}\label{LyapunovSchmidt}
Let us provide an overview of how the \emph{Lyapunov--Schmidt procedure} lets us rewrite the problem
\begin{equation}\label{Equation:CriticalPoint}
	\nabla E_\epsilon\parens{u}=0;
	\quad
	u\in H^1
\end{equation}
as a critical point equation for a functional on a finite-dimensional manifold.

Suppose that $\mathcal{Z}_\epsilon$ is a \emph{manifold of pseudo-critical points} of $E_\epsilon$, i.e., $\mathcal{Z}_\epsilon$ is a finite-dimensional submanifold of $H^1$ and we can conveniently bound $\norm{\nabla E_\epsilon \parens{z}}_{H^1}$ for $z\in\mathcal{Z}_\epsilon$. In particular, we can associate any $z\in\mathcal{Z}_\epsilon$ to an orthogonal decomposition of $H^1$,
\[
	H^1
	=
	\Tan_z \mathcal{Z}_\epsilon
	\oplus
	\parens{\Tan_z \mathcal{Z}_\epsilon}^\perp,
\]
where $\Tan_z \mathcal{Z}_\epsilon$ denotes the tangent space to $\mathcal{Z}_\epsilon$ at $z$. We then rewrite the critical point equation \eqref{Equation:CriticalPoint} as the system of equations
\begin{numcases}
\\
\Pi_{\epsilon,z} \parens{\nabla E_\epsilon \parens{u}}
=
0;
\label{Equation:Auxiliary}
\\
\parens{\Id - \Pi_{\epsilon,z}} \parens{\nabla E_\epsilon \parens{u}}
=
0;
\label{Equation:Bifurcation}
\\
u\in H^1,
\nonumber
\end{numcases}
where
$
	\Pi_{\epsilon,z}
	\colon
	H^1 \to \parens{\Tan_z \mathcal{Z}_\epsilon}^\perp
$
is an orthogonal projection and we respectively name \eqref{Equation:Auxiliary}, \eqref{Equation:Bifurcation} the \emph{auxiliary} and \emph{bifurcation} equations. The next step consists in using the Implicit Function Theorem to prove that if $\epsilon$ is sufficiently small, then we can associate each $z\in\mathcal{Z}_\epsilon$ to a
$
	w_{\epsilon, z} \in \parens{\Tan_z \mathcal{Z}_\epsilon}^\perp
$
such that $\parens{z + w_{\epsilon, z}}$ solves the auxiliary equation. To conclude, it suffices to show that the newly obtained manifold,
\[
\tilde{\mathcal{Z}}_\epsilon
:=
\set{z + w_{\epsilon, z}: z\in\mathcal{Z}_\epsilon}
\subset
H^1,
\]
is a natural constraint of $E_\epsilon$, i.e., a critical point of $E_\epsilon|_{\tilde{\mathcal{Z}}_\epsilon}$ is a critical point of $E_\epsilon$.

\subsection{A related problem}\label{RelatedProblem}

Note that Problem \eqref{Equation:SimpleProblem} is closely related to
\begin{equation}\label{P_lambda}\tag{$P_\lambda$}
	-\Delta u + \lambda^2 u = u\abs{u}^{p-1}~\text{in}~\real^3
\end{equation}
for any $\lambda\in\ooi{0,\infty}$. Indeed, if $u\colon\real^3\to\real$ is a solution to \eqref{Equation:SimpleProblem}, then a straightforward computation shows that the function $x\mapsto\lambda^{2/\parens{p-1}}u\parens*{\lambda x}$ solves \eqref{P_lambda}. We finish by remarking that it suffices to search for critical points of the functional $\overline{I}_\lambda \in C^2\parens{H^1}$ given by
\[
\overline{I}_\lambda\parens{u}
=
\frac{1}{2} \norm{u}_{H^1_\lambda}^2
-
\frac{1}{p+1} \norm{u}_{L^{p+1}}^{p+1}
\]
to obtain weak solutions to \eqref{P_lambda}.

\section{Multiplicity result}\label{MultiplicityResult}
\subsection{The manifold of pseudo-critical points} \label{PseudoMult}

We begin with a Taylor expansion.
\begin{rmk}\label{Remark:TaylorMult}
In view of \ref{V_1},
\[
	\abs*{
		V \parens{\epsilon x} - V \parens{\epsilon \xi}
	}
	\lesssim
	\epsilon \abs{x - \xi}
	\quad\text{and}\quad
	\abs*{
		V \parens{\epsilon x} - V \parens{\epsilon \xi} - \epsilon \nabla V \parens{\epsilon \xi} \cdot \parens{x - \xi}
	}
	\lesssim
	\epsilon^2 \abs{x - \xi}^2
\]
for every $\parens{\epsilon, \xi} \in \ooi{0, 1} \times \real^3$ and $x \in \real^3$.
\end{rmk}

Considering the discussion in Section \ref{RelatedProblem}, we define
$z_{\epsilon, \xi} \colon \real^3 \to \ooi{0, \infty}$
as
\[
	z_{\epsilon, \xi}\parens{x}
	=
	U_{\lambda_{\epsilon \xi}}\parens{x - \xi},
\]
so that $z_{\epsilon, \xi}$ solves $(P_{\lambda_{\epsilon \xi}})$, i.e.,
\begin{equation}\label{Equation:PDE_z_epsilon,xi}
	-\Delta z_{\epsilon, \xi}
	+
	\lambda_{\epsilon \xi}^2 z_{\epsilon, \xi}
	=
	z_{\epsilon, \xi}^p
	~\text{in}~\real^3,
\end{equation}
where we recall that
$\lambda_{\epsilon \xi} = V \parens{\epsilon \xi}^{1/2}$.
Let us state some estimates involving the functions $z_{\epsilon, \xi}$.
\begin{lem}\label{Lemma:z_eps,xi}
We have
\begin{equation}\label{Equation:z_epsilon,xi:1}
	1 \lesssim
	\norm{z_{\epsilon, \xi}}_{H^1},
	\norm{\partial_i z_{\epsilon, \xi}}_{H^1}
	\lesssim 1;
\end{equation}
\begin{equation}\label{Equation:z_epsilon,xi:2}
	\norm{\dot{z}_{\epsilon,\xi,i} + \partial_i z_{\epsilon,\xi}}_{H^1}
	\lesssim
	\epsilon
\end{equation}
and
\begin{equation}\label{Equation:z_epsilon,xi:3}
	\abs{\angles{
		\dot{z}_{\epsilon, \xi, i} \mid \dot{z}_{\epsilon, \xi, j}
	}_{H^1}}
	\lesssim
	\epsilon
	~\text{if}~
	i \neq j
\end{equation}
for every $\parens{\epsilon, \xi} \in \ooi{0, 1} \times \real^3$ and $i \in \set{1, 2, 3}$, where $\dot{z}_{\epsilon, \xi, i}$ denotes the $i$\textsuperscript{th} partial derivative of $\real^3 \ni \zeta \mapsto z_{\epsilon, \zeta} \in H^1$ evaluated at $\xi$.
\end{lem}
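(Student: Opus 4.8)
The plan is to reduce every assertion to the scaling structure of $U_\lambda$ and to the uniform control $0<\inf V\leq\sup V<\infty$ furnished by \ref{V_1} and \ref{V_2}. These bounds confine $\lambda_{\epsilon\xi}=V\parens{\epsilon\xi}^{1/2}$ to a fixed compact interval $\cci{\lambda_-,\lambda_+}\subset\ooi{0,\infty}$ for all $\parens{\epsilon,\xi}\in\ooi{0,1}\times\real^3$. Since $z_{\epsilon,\xi}=U_{\lambda_{\epsilon\xi}}\parens{\cdot-\xi}$ is a Euclidean translate of the dilate $U_{\lambda_{\epsilon\xi}}$ and translation is an $H^1$-isometry, I would throughout discard the translation and work with $U_\lambda$ and its spatial derivatives, recording only how $z_{\epsilon,\zeta}$ depends on the base point $\zeta$ through both the shift $\cdot-\zeta$ and the dilation parameter $\lambda_{\epsilon\zeta}$.

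For \eqref{Equation:z_epsilon,xi:1} I would change variables in the defining integrals to obtain the scaling identities
\[
	\norm{U_\lambda}_{L^2}^2=\lambda^{4/\parens{p-1}-3}\norm{U}_{L^2}^2,
	\qquad
	\norm{\nabla U_\lambda}_{L^2}^2=\lambda^{4/\parens{p-1}-1}\norm{\nabla U}_{L^2}^2,
\]
together with the analogous ones for $\partial_i U_\lambda=\lambda^{2/\parens{p-1}+1}\parens{\partial_i U}\parens{\lambda\cdot}$, which require $U\in H^2$. As $U\in H^k$ for every $k$ (elliptic regularity together with \eqref{Equation:ExponentialDecay}) and $U$ is a non-constant radial function, $U$ and each $\partial_i U$ are nonzero in $H^1$; hence the right-hand sides are bounded above and below by positive constants uniformly for $\lambda\in\cci{\lambda_-,\lambda_+}$. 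Translation invariance of $\norm{\cdot}_{H^1}$ then yields the two-sided bound \eqref{Equation:z_epsilon,xi:1}.

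For \eqref{Equation:z_epsilon,xi:2} I would first verify that $\zeta\mapsto z_{\epsilon,\zeta}$ is a $C^1$ map into $H^1$ and then differentiate by the chain rule, separating the shift from the dilation. Writing $w_\lambda:=\partial_\lambda U_\lambda$, this produces the identity
\[
	\dot{z}_{\epsilon,\xi,i}+\partial_i z_{\epsilon,\xi}
	=
	c_{\epsilon,\xi,i}\,w_{\lambda_{\epsilon\xi}}\parens{\cdot-\xi},
	\qquad
	c_{\epsilon,\xi,i}:=\tfrac{1}{2}\,\epsilon\,V\parens{\epsilon\xi}^{-1/2}\,\partial_i V\parens{\epsilon\xi},
\]
since the shift contributes exactly $-\partial_i z_{\epsilon,\xi}$ and the dilation contributes the second term. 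By \ref{V_1} and \ref{V_2} the scalar satisfies $\abs{c_{\epsilon,\xi,i}}\lesssim\epsilon$ uniformly, so \eqref{Equation:z_epsilon,xi:2} follows once I show $\norm{w_{\lambda_{\epsilon\xi}}\parens{\cdot-\xi}}_{H^1}=\norm{w_{\lambda_{\epsilon\xi}}}_{H^1}\lesssim1$. Here the scaling relation $w_\lambda=\lambda^{-1}\parens{\tfrac{2}{p-1}U+x\cdot\nabla U}_\lambda$, with $\parens{\cdot}_\lambda$ the dilation $g\mapsto\lambda^{2/\parens{p-1}}g\parens{\lambda\,\cdot}$, reduces matters to $\tfrac{2}{p-1}U+x\cdot\nabla U\in H^1$, i.e.\ to the finiteness of weighted integrals such as $\int\abs{x}^2\parens{\abs{\nabla U}^2+\abs{\Dif^2 U}^2}$; these are finite because \eqref{Equation:ExponentialDecay}, extended to the second derivatives of $U$ via the equation $\Delta U=U-U^p$ and interior elliptic estimates, absorbs the polynomial weight $\abs{x}$. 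This differentiability-and-decay step is the main obstacle: once the $H^1$-valued derivative exists and $w_\lambda$ is bounded uniformly for $\lambda\in\cci{\lambda_-,\lambda_+}$, the remainder is bookkeeping.

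Finally, for \eqref{Equation:z_epsilon,xi:3} I would insert \eqref{Equation:z_epsilon,xi:2} into the inner product. Setting $a_{\epsilon,\xi,i}:=\dot{z}_{\epsilon,\xi,i}+\partial_i z_{\epsilon,\xi}$, so that $\norm{a_{\epsilon,\xi,i}}_{H^1}\lesssim\epsilon$ and $\dot{z}_{\epsilon,\xi,i}=a_{\epsilon,\xi,i}-\partial_i z_{\epsilon,\xi}$, the bilinear expansion gives
\[
	\angles{\dot{z}_{\epsilon,\xi,i}\mid\dot{z}_{\epsilon,\xi,j}}_{H^1}
	=
	\angles{\partial_i z_{\epsilon,\xi}\mid\partial_j z_{\epsilon,\xi}}_{H^1}
	-\angles{\partial_i z_{\epsilon,\xi}\mid a_{\epsilon,\xi,j}}_{H^1}
	-\angles{a_{\epsilon,\xi,i}\mid\partial_j z_{\epsilon,\xi}}_{H^1}
	+\angles{a_{\epsilon,\xi,i}\mid a_{\epsilon,\xi,j}}_{H^1}.
\]
For $i\neq j$ the leading term vanishes identically: since $z_{\epsilon,\xi}$ is radial about $\xi$, reflecting the $i$-th coordinate about $\xi_i$ makes the integrand of $\angles{\partial_i z_{\epsilon,\xi}\mid\partial_j z_{\epsilon,\xi}}_{H^1}$ odd, so its integral over $\real^3$ is zero. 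The three remaining terms are each $\lesssim\epsilon$ by Cauchy--Schwarz together with \eqref{Equation:z_epsilon,xi:1} and \eqref{Equation:z_epsilon,xi:2} (they are controlled by $\epsilon\cdot1$, $1\cdot\epsilon$ and $\epsilon^2$ respectively), whence \eqref{Equation:z_epsilon,xi:3}.
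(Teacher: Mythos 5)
Your proposal is correct and follows essentially the same route as the paper: scaling together with the uniform bounds from \ref{V_1}--\ref{V_2} for \eqref{Equation:z_epsilon,xi:1}, the chain-rule identity $\dot{z}_{\epsilon,\xi,i}+\partial_i z_{\epsilon,\xi}=\epsilon\cdot(\text{bounded in }H^1)$ for \eqref{Equation:z_epsilon,xi:2}, and reduction of \eqref{Equation:z_epsilon,xi:3} to the vanishing of $\angles{\partial_i z_{\epsilon,\xi}\mid\partial_j z_{\epsilon,\xi}}_{H^1}$ by a parity argument. The only cosmetic difference is in the last step: the paper first uses the linearized equation $-\Delta\parens{\partial_i U}+\partial_i U=pU^{p-1}\partial_i U$ to rewrite $\angles{\partial_i U\mid\partial_j U}_{H^1}$ as $p\int\brackets{\parens{\partial_i U}\parens{\partial_j U}U^{p-1}}$ before invoking oddness, whereas you apply the reflection argument directly to the inner product (second-derivative terms included) --- both are valid.
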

\begin{proof}
\emph{Proof of \eqref{Equation:z_epsilon,xi:1}.}
Clearly,
\[
	\parens{\inf V}^{1/\parens{p-1}}
	U \parens*{
		\lambda_{\epsilon \xi} \parens{x-\xi}
	}
	\leq
	z_{\epsilon, \xi} \parens{x}
	\leq
	\parens{\sup V}^{1/\parens{p-1}}
	U \parens*{
		\lambda_{\epsilon \xi} \parens{x-\xi}
	}.
\]
Therefore,
\[
	\parens{\inf V}^{1/\parens{p-1}}
	\leq
	\lambda_{\epsilon \xi}
	\frac{\norm{z_{\epsilon, \xi}}_{H^1}}{\norm{U}_{H^1}}
	\leq
	\parens{\sup V}^{1/\parens{p-1}},
\]
so it follows from \ref{V_1} and \ref{V_2} that
$1 \lesssim \norm{z_{\epsilon, \xi}} \lesssim 1$.

It follows from the differentiation of \eqref{Equation:PDE_z_epsilon,xi} that
\[
	\norm{\partial_i z_{\epsilon, \xi}}_{H^1}
	=
	p \norm*{
		z_{\epsilon, \xi}^{p - 1} \partial_i z_{\epsilon, \xi}
	}_{L^{p / \parens{p - 1}}}
	=
	p \lambda_{\epsilon \xi}^{p / \parens{p - 1}}
	\norm{U \partial_i U}_{L^{p / \parens{p - 1}}},
\]
thus
\[
	p\parens{\inf V}^{p/\parens{p-1}}
	\leq
	\frac{\norm{\partial_i z_{\epsilon, \xi}}_{H^1}}{
		\norm{U \partial_i U}_{L^{p / \parens{p - 1}}}
	}
	\leq
	p\parens{\sup V}^{p/\parens{p-1}}.
\]
In this situation, the other estimate also follows from \ref{V_1} and \ref{V_2}.

\emph{Proof of \eqref{Equation:z_epsilon,xi:2}.}
Differentiating $\zeta \mapsto z_{\epsilon, \zeta}$, we obtain
\[
\dot{z}_{\epsilon, \xi, i} \parens{x}
=
\epsilon \partial_i V \parens{\epsilon \xi}
\brackets*{
	\frac{1}{p-1}
	V \parens{\epsilon \xi}^{-1}
	z_{\epsilon, \xi} \parens{x}
	+
	\frac{1}{2}
	V \parens{\epsilon \xi}^{-1/2}
	\partial_i z_{\epsilon, \xi} \parens{x}
}
-
\partial_i z_{\epsilon, \xi} \parens{x}.
\]
The conclusion follows from \ref{V_1}, \ref{V_2} and \eqref{Equation:z_epsilon,xi:1}.

\emph{Proof of \eqref{Equation:z_epsilon,xi:3}.}
Due to the previous result,
\[
\abs*{
	\angles{
		\dot{z}_{\epsilon, \xi, i} \mid \dot{z}_{\epsilon, \xi, j}
	}_{H^1}
	-
	\angles{
		\partial_i z_{\epsilon, \xi} \mid \partial_j z_{\epsilon, \xi}
	}_{H^1}
}
\lesssim \epsilon.
\]
To conclude, it suffices to prove that
$
\angles{
	\partial_i z_{\epsilon, \xi} \mid \partial_j z_{\epsilon, \xi}
}_{H^1}
=
0
$.
It is clear that
\[
\angles{
	\partial_i z_{\epsilon, \xi} \mid \partial_j z_{\epsilon, \xi}
}_{H^1}
=
\lambda_{\epsilon \xi}^{\parens{5 - p}/\parens{p - 1}}
\angles{
	\partial_i U \mid \partial_j U
}_{H^1},
\]
so it suffices to show that
$
\angles{
	\partial_i U \mid \partial_j U
}_{H^1}
=
0
$.
As
\[
	-\Delta \parens{\partial_i U} + \partial_i U = p U^{p-1} \partial_i U
	~\text{in}~
	\real^3,
\]
we deduce that
$
\angles{
	\partial_i U \mid \partial_j U
}_{H^1}
=
p
\int \brackets{
	\parens{\partial_i U} \parens{\partial_j U} U^{p-1}
}
$.
The function $U$ is spherically symmetric, while
$x \mapsto \partial_i U \parens{x} \partial_j U \parens{x}$
is odd in the $i$\textsuperscript{th} and $j$\textsuperscript{th} variable, hence the result.
\end{proof}

We proceed to define
$\mathcal{Z}_\epsilon = \set{z_{\epsilon, \xi}: \xi \in \real^3}$,
which is easily seen to be a non-compact submanifold of $H^1$ and we remark that its tangent spaces are given by
\[
	\Tan_{z_{\epsilon, \xi}} \mathcal{Z}_\epsilon = \mathrm{span} \set{
		\dot{z}_{\epsilon, \xi, 1},
		\dot{z}_{\epsilon, \xi, 2},
		\dot{z}_{\epsilon, \xi, 3}
	}
	\subset
	H^1.
\]
The manifold $\mathcal{Z}_\epsilon$ will act as our manifold of pseudo-critical points of $E_\epsilon$ due to the result that follows.
\begin{lem}\label{Lemma:PseudoCriticalPoints}
We have
$
	\norm{\nabla E_\epsilon \parens{z_{\epsilon, \xi}}}_{H^1}
	\lesssim
	\epsilon\abs{\nabla V\parens{\epsilon\xi}}
	+
	\epsilon^2
$
for every $\parens{\epsilon,\xi}\in\ooi{0,1}\times\real^3$.
\end{lem}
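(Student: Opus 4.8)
The plan is to compute the gradient explicitly and exploit that $z_{\epsilon, \xi}$ solves $(P_{\lambda_{\epsilon \xi}})$ in order to cancel the leading-order terms, leaving only controllable remainders. First I would recall from Lemma \ref{Lemma:EnergyClassC2} that, since $z_{\epsilon, \xi} > 0$,
\[
	\Dif_{z_{\epsilon, \xi}} E_\epsilon \parens{w}
	=
	\angles{z_{\epsilon, \xi} \mid w}_{H^1_{V_\epsilon}}
	+
	\epsilon^3 \int \parens{K_\epsilon \phi_{\epsilon, z_{\epsilon, \xi}^2} z_{\epsilon, \xi} w}
	-
	\int \parens{z_{\epsilon, \xi}^p w}.
\]
Testing the weak formulation of \eqref{Equation:PDE_z_epsilon,xi} against $w$ gives $\int \parens{\nabla z_{\epsilon, \xi} \cdot \nabla w + \lambda_{\epsilon \xi}^2 z_{\epsilon, \xi} w} = \int z_{\epsilon, \xi}^p w$, and since $\lambda_{\epsilon \xi}^2 = V \parens{\epsilon \xi}$, the pure-power terms cancel and I am left with
\[
	\Dif_{z_{\epsilon, \xi}} E_\epsilon \parens{w}
	=
	\int \brackets{\parens{V_\epsilon - V \parens{\epsilon \xi}} z_{\epsilon, \xi} w}
	+
	\epsilon^3 \int \parens{K_\epsilon \phi_{\epsilon, z_{\epsilon, \xi}^2} z_{\epsilon, \xi} w}.
\]
Because $\norm{\nabla E_\epsilon \parens{z_{\epsilon, \xi}}}_{H^1} = \sup_{\norm{w}_{H^1} \leq 1} \abs{\Dif_{z_{\epsilon, \xi}} E_\epsilon \parens{w}}$, it then suffices to bound each of these two integrals by a constant multiple of $\parens{\epsilon \abs{\nabla V \parens{\epsilon \xi}} + \epsilon^2} \norm{w}_{H^1}$.

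The Bopp--Podolsky term is the easy one: by Lemma \ref{Lemma:FourTerms} and \eqref{Equation:z_epsilon,xi:1},
\[
	\epsilon^3 \abs*{\int \parens{K_\epsilon \phi_{\epsilon, z_{\epsilon, \xi}^2} z_{\epsilon, \xi} w}}
	\lesssim
	\epsilon^3 \norm{z_{\epsilon, \xi}}_{H^1}^3 \norm{w}_{H^1}
	\lesssim
	\epsilon^2 \norm{w}_{H^1},
\]
using $\epsilon < 1$. For the potential term I would insert the first-order Taylor expansion of Remark \ref{Remark:TaylorMult}, writing $V \parens{\epsilon x} - V \parens{\epsilon \xi} = \epsilon \nabla V \parens{\epsilon \xi} \cdot \parens{x - \xi} + R_\xi \parens{x}$ with $\abs{R_\xi \parens{x}} \lesssim \epsilon^2 \abs{x - \xi}^2$. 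This splits the integral into a linear part bounded by $\epsilon \abs{\nabla V \parens{\epsilon \xi}} \int \abs{x - \xi} z_{\epsilon, \xi} \parens{x} \abs{w \parens{x}} \dif x$ and a remainder bounded by $\epsilon^2 \int \abs{x - \xi}^2 z_{\epsilon, \xi} \parens{x} \abs{w \parens{x}} \dif x$.

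It remains to bound these two weighted integrals, which is where the (mild) work lies. By the Cauchy--Schwarz inequality, $\int \abs{x - \xi}^k z_{\epsilon, \xi} \parens{x} \abs{w \parens{x}} \dif x \leq \norm*{\abs{\cdot - \xi}^k z_{\epsilon, \xi}}_{L^2} \norm{w}_{L^2}$ for $k \in \set{1, 2}$, and since $z_{\epsilon, \xi} \parens{x} = U_{\lambda_{\epsilon \xi}} \parens{x - \xi}$, a change of variables reduces this to $\norm*{\abs{\cdot}^k U_{\lambda_{\epsilon \xi}}}_{L^2}$. The exponential decay \eqref{Equation:ExponentialDecay} of $U$, combined with the uniform two-sided bounds on $\lambda_{\epsilon \xi}$ coming from \ref{V_1} and \ref{V_2}, make these weighted norms finite and bounded uniformly in $\parens{\epsilon, \xi}$; hence each weighted integral is $\lesssim \norm{w}_{H^1}$. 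Collecting the three estimates yields $\abs{\Dif_{z_{\epsilon, \xi}} E_\epsilon \parens{w}} \lesssim \parens{\epsilon \abs{\nabla V \parens{\epsilon \xi}} + \epsilon^2} \norm{w}_{H^1}$, and the claim follows upon taking the supremum over $\norm{w}_{H^1} \leq 1$. The only genuine obstacle is the uniform control of the weighted integrals, and it is dispatched entirely by the exponential decay of $U$; everything else is an algebraic cancellation driven by the defining equation \eqref{Equation:PDE_z_epsilon,xi}.
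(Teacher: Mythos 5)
Your proof is correct, and its skeleton coincides with the paper's: you isolate the Bopp--Podolsky term $\epsilon^3 \int \parens{K_\epsilon \phi_{\epsilon, z_{\epsilon,\xi}^2} z_{\epsilon,\xi} w}$ and dispatch it with Lemma \ref{Lemma:FourTerms} together with the uniform bounds of Lemma \ref{Lemma:z_eps,xi}, exactly as the paper does. The difference lies in the remaining term: the paper observes that what is left is precisely $\Dif_{z_{\epsilon,\xi}} I_\epsilon$ and then simply cites \cite[Lemma 8.8]{AmbrosettiMalchiodi2006} for the bound $\norm{\nabla I_\epsilon\parens{z_{\epsilon,\xi}}}_{H^1} \lesssim \epsilon \abs{\nabla V \parens{\epsilon\xi}} + \epsilon^2$, whereas you prove that estimate from scratch: you use the equation \eqref{Equation:PDE_z_epsilon,xi} to cancel the leading terms, reduce to $\int \brackets{\parens{V_\epsilon - V\parens{\epsilon\xi}} z_{\epsilon,\xi} w}$, insert the first-order Taylor expansion of Remark \ref{Remark:TaylorMult}, and control the resulting weighted integrals via Cauchy--Schwarz, the scaling $U_{\lambda} = \lambda^{2/\parens{p-1}} U \parens{\lambda \cdot}$ with $\lambda_{\epsilon\xi}$ uniformly bounded above and below by \ref{V_1}--\ref{V_2}, and the exponential decay \eqref{Equation:ExponentialDecay}. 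This is in substance the argument behind the cited lemma (and it reappears in the paper's own second step of the proof of Lemma \ref{Lemma:BoundNormInverseL}), so what your version buys is self-containedness and an explicit view of where the two powers $\epsilon \abs{\nabla V\parens{\epsilon\xi}}$ and $\epsilon^2$ come from, at the cost of length; the paper's version buys brevity by outsourcing the unperturbed estimate to the reference it leans on throughout.
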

\begin{proof}
Clearly,
\[
	\Dif_{z_{\epsilon, \xi}} E_\epsilon \parens{u}
	=
	\Dif_{z_{\epsilon, \xi}} I_\epsilon \parens{u}
	+
	\epsilon^3 \int \parens*{
		K_\epsilon
		\phi_{\epsilon, z_{\epsilon, \xi}^2}
		z_{\epsilon, \xi} u
	}.
\]
Due to Lemmas \ref{Lemma:FourTerms} and \ref{Lemma:z_eps,xi},
$
\epsilon^3
\int \abs{
	K_\epsilon
	\phi_{\epsilon, z_{\epsilon, \xi}^2}
	z_{\epsilon, \xi} u
}
\lesssim
\epsilon^3 \norm{u}_{H^1}
$.
The result then follows from \cite[Lemma 8.8]{AmbrosettiMalchiodi2006}.
\end{proof}

\subsection{Solving the auxiliary equation} \label{AuxMult}

Following the steps of the Lyapunov--Schmidt reduction, our next task consists in solving the auxiliary equation. More precisely, the goal of this section is to prove the following lemma.

\begin{lem}\label{Lemma:AuxMult}
There exists $\epsilon_0 \in \ooi{0, 1}$ and an application of class $C^1$,
\begin{equation}\label{Equation:wMapMultiplicity}
	\ooi{0, \epsilon_0} \times \real^3
	\ni
	\parens{\epsilon, \xi}
	\mapsto
	w_{\epsilon,\xi}\in H^1,
\end{equation}
such that given
$
	\parens{\epsilon, \xi} \in \ooi{0, \epsilon_0} \times \real^3
$,
\[
\Pi_{\epsilon, \xi}\parens{
	\nabla E_\epsilon \parens{z_{\epsilon, \xi} + w_{\epsilon, \xi}}
}
=
0
\quad\text{and}\quad
w_{\epsilon, \xi} \in W_{\epsilon, \xi} := \parens{
	\Tan_{z_{\epsilon,\xi}}\mathcal{Z}_\epsilon
}^\perp.
\]
Moreover,
\[
	\norm{w_{\epsilon, \xi}}_{H^1}
	\lesssim
	\epsilon \abs{\nabla V \parens{\epsilon \xi}}
	+
	\epsilon^2;
	\quad
	\norm{\dot{w}_{\epsilon, \xi, i}}_{H^1}
	\lesssim
	\brackets*{
		\epsilon \abs{\nabla V \parens{\epsilon\xi}}
		+
		\epsilon^2
	}^\mu
\]
for every
$\parens{\epsilon, \xi} \in \ooi{0, \epsilon_0} \times \real^3$
and $i\in\set{1,2,3}$, where $\mu:=\min\parens{1,p-1}$.
\end{lem}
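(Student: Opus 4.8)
The plan is to solve the auxiliary equation by a uniform contraction argument built on the linearization of $E_\epsilon$ at $z_{\epsilon,\xi}$, and then to upgrade the solution to a $C^1$ family through the implicit function theorem. Let $A_{\epsilon,\xi}\in\End\parens{H^1}$ be the self-adjoint operator representing the bilinear form $\Dif^2_{z_{\epsilon,\xi}} E_\epsilon$ via the Riesz isomorphism, and set
\[
L_{\epsilon,\xi} := \Pi_{\epsilon,\xi} A_{\epsilon,\xi}|_{W_{\epsilon,\xi}} \colon W_{\epsilon,\xi} \to W_{\epsilon,\xi}.
\]
The crucial step is to show that $L_{\epsilon,\xi}$ is invertible with $\norm{L_{\epsilon,\xi}^{-1}}$ bounded uniformly in $\parens{\epsilon,\xi}\in\ooi{0,\epsilon_0}\times\real^3$. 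By Lemma \ref{Lemma:EnergyClassC2}, $\Dif^2_u E_\epsilon$ equals $\Dif^2_u I_\epsilon$ plus the $\epsilon^3$-coupling terms displayed there, and Lemma \ref{Lemma:FourTerms} bounds those terms by $C\norm{w_1}_{H^1}\norm{w_2}_{H^1}$; hence $L_{\epsilon,\xi}$ differs from the corresponding operator for $I_\epsilon$ by one of norm $O\parens{\epsilon^3}$. The uniform invertibility of the latter on $W_{\epsilon,\xi}$ is the abstract non-degeneracy estimate for \eqref{NLS_epsilon} (see \cite[Lemma 8.8]{AmbrosettiMalchiodi2006}), which rests on the non-degeneracy of $U$, namely that the kernel of the linearization of \eqref{Equation:SimpleProblem} at $U$ equals $\mathrm{span}\set{\partial_1 U,\partial_2 U,\partial_3 U}$ --- exactly the directions removed upon restricting to $W_{\epsilon,\xi}$. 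A Neumann-series argument then transfers the invertibility to $L_{\epsilon,\xi}$ for $\epsilon$ small.

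Next I would recast the auxiliary equation as a fixed point. Writing $\nabla E_\epsilon\parens{z_{\epsilon,\xi}+w} = \nabla E_\epsilon\parens{z_{\epsilon,\xi}} + A_{\epsilon,\xi} w + R_{\epsilon,\xi}\parens{w}$ with $R_{\epsilon,\xi}$ collecting the nonlinear remainder, the projected equation $\Pi_{\epsilon,\xi}\nabla E_\epsilon\parens{z_{\epsilon,\xi}+w}=0$ becomes equivalent to
\[
w = N_{\epsilon,\xi}\parens{w} := -L_{\epsilon,\xi}^{-1}\Pi_{\epsilon,\xi}\brackets{\nabla E_\epsilon\parens{z_{\epsilon,\xi}} + R_{\epsilon,\xi}\parens{w}}, \quad w \in W_{\epsilon,\xi}.
\]
By Lemma \ref{Lemma:PseudoCriticalPoints} the constant term satisfies $\norm{\Pi_{\epsilon,\xi}\nabla E_\epsilon\parens{z_{\epsilon,\xi}}}_{H^1} \lesssim \rho_{\epsilon,\xi} := \epsilon\abs{\nabla V\parens{\epsilon\xi}} + \epsilon^2$, while the elementary inequalities of Lemma \ref{Lemma:Elementary}, applied to $u\abs{u}^{p-1}$ together with the Sobolev embeddings, give $\norm{R_{\epsilon,\xi}\parens{w}}_{H^1} = o\parens{\norm{w}_{H^1}}$ and a Lipschitz constant tending to $0$ as $\norm{w}_{H^1}\to 0$, uniformly in $\parens{\epsilon,\xi}$. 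Consequently $N_{\epsilon,\xi}$ maps the ball $\set{\norm{w}_{H^1} \leq C\rho_{\epsilon,\xi}}$ into itself and is a contraction there for $\epsilon$ small; its unique fixed point $w_{\epsilon,\xi}$ solves the auxiliary equation and obeys $\norm{w_{\epsilon,\xi}}_{H^1} \lesssim \rho_{\epsilon,\xi}$, which is the first asserted bound.

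For the regularity and the derivative estimate I would apply the implicit function theorem to $H\parens{\epsilon,\xi,w} := \Pi_{\epsilon,\xi}\nabla E_\epsilon\parens{z_{\epsilon,\xi}+w}$, viewed as a $C^1$ map into $W_{\epsilon,\xi}$; its partial differential in $w$ at $w_{\epsilon,\xi}$ equals $L_{\epsilon,\xi}$ up to a term of norm $O\parens{\norm{w_{\epsilon,\xi}}_{H^1}}$, hence is invertible, so $\parens{\epsilon,\xi}\mapsto w_{\epsilon,\xi}$ is $C^1$. Differentiating the fixed-point identity in $\xi_i$ yields $\parens{\Id - \Dif_w N_{\epsilon,\xi}}\dot w_{\epsilon,\xi,i} = \partial_{\xi_i}N_{\epsilon,\xi}$; since $\Dif_w N_{\epsilon,\xi}$ is a contraction, $\norm{\dot w_{\epsilon,\xi,i}}_{H^1} \lesssim \norm{\partial_{\xi_i}N_{\epsilon,\xi}}_{H^1}$. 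The latter is controlled using \eqref{Equation:z_epsilon,xi:2} for the $\xi$-derivatives of $z_{\epsilon,\xi}$, and its size is dictated by the modulus of continuity of $A_{\epsilon,\xi}$ in its base point; since this operator carries the factor $\abs{u}^{p-1}$, which by Lemma \ref{Lemma:Elementary} is only $\min\parens{1,p-1}$-Hölder, one obtains $\norm{\partial_{\xi_i}N_{\epsilon,\xi}}_{H^1} \lesssim \rho_{\epsilon,\xi}^{\mu}$, giving the second asserted bound.

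The main obstacle is the \emph{uniform} invertibility of $L_{\epsilon,\xi}$: because $\mathcal{Z}_\epsilon$ is non-compact one must verify that the non-degeneracy estimate holds uniformly in $\xi \in \real^3$, and that the Bopp--Podolsky coupling --- being of order $\epsilon^3$ by Lemma \ref{Lemma:FourTerms} --- does not spoil it. A secondary technical point is tracking the correct Hölder exponent $\mu = \min\parens{1,p-1}$ in the derivative estimate, which is genuinely needed when $p<2$, where $u\abs{u}^{p-1}$ is only $C^{1,\mu}$ and the second differential of $E_\epsilon$ fails to be Lipschitz in its base point.
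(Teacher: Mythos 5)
Your proposal follows the same reduction scheme as the paper: uniform invertibility of the projected linearization $L_{\epsilon,\xi}$, obtained exactly as in Lemma \ref{Lemma:BoundNormInverseL} by viewing the Bopp--Podolsky coupling as an $O\parens{\epsilon^3}$ perturbation of the corresponding operator for $I_\epsilon$. Where you diverge is in solving the projected equation by a contraction on the ball $\set{\norm{w}_{H^1}\leq C\brackets{\epsilon\abs{\nabla V\parens{\epsilon\xi}}+\epsilon^2}}$ rather than by the paper's implicit function theorem; for existence and the first estimate this variant is sound, and it arguably makes the bound on $\norm{w_{\epsilon,\xi}}_{H^1}$ more transparent than the paper's device of building it into the domain $\mathcal{O}$. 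Two minor points: the input you want from Ambrosetti--Malchiodi is their Lemmas 8.9--8.10, not 8.8 (the latter is the pseudo-critical-point bound used in Lemma \ref{Lemma:PseudoCriticalPoints}); and the uniformity in $\xi\in\real^3$ that you flag as the main obstacle is precisely what \ref{V_1}--\ref{V_2} provide, since every constant in Lemmas \ref{Lemma:z_eps,xi}--\ref{Lemma:BoundNormInverseL} depends only on $\norm{V}_{C^2}$, $\inf V$ and $\sup K$.

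The genuine gap is in the $C^1$-dependence and the bound on $\norm{\dot{w}_{\epsilon,\xi,i}}_{H^1}$. First, the map $H\parens{\epsilon,\xi,w}=\Pi_{\epsilon,\xi}\parens{\nabla E_\epsilon\parens{z_{\epsilon,\xi}+w}}$ to which you apply the implicit function theorem takes values in $W_{\epsilon,\xi}$, a subspace that moves with $\parens{\epsilon,\xi}$, and its unknown is constrained to that moving subspace as well; the theorem requires maps between fixed Banach spaces, so this application is not licit as written. Second, your derivative estimate differentiates the fixed-point identity, and $\partial_{\xi_i}N_{\epsilon,\xi}$ contains $\partial_{\xi_i}A_{\epsilon,\xi}$ and $\partial_{\xi_i}L_{\epsilon,\xi}^{-1}$, that is, derivatives of $\Dif^2_{z_{\epsilon,\xi}}E_\epsilon$ with respect to its base point. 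This is third-order information on $E_\epsilon$, and for $1<p<2$ --- exactly the regime you single out, where $u\abs{u}^{p-1}$ is only $C^{1,\mu}$ --- the functional $E_\epsilon$ is merely $C^2$ with Hölder-continuous second differential, so these derivatives are not available without a separate and delicate argument. The paper avoids both problems with a single device: the Lagrange-multiplier map $\mathcal{H}\parens{\epsilon,\xi,w,\alpha}$, whose zero set encodes the projected equation together with the constraints $\angles{w\mid\dot{z}_{\epsilon,\xi,i}}_{H^1}=0$, maps into the \emph{fixed} space $H^1\times\real^3$; its $\parens{w,\alpha}$-differential is invertible by the same linear analysis you carried out, and differentiating the identity $\mathcal{H}=0$ in $\xi$ only ever \emph{evaluates} $\Dif^2 E_\epsilon$ against $\dot{z}_{\epsilon,\xi,i}$ --- which is what Lemmas \ref{Lemma:ApproxDer} and \ref{Lemma:Estimate2ndDer} control --- and never differentiates it. Your argument becomes correct if you either adopt this multiplier formulation or differentiate your equation $H=0$ directly instead of the fixed-point identity.
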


We need several preliminary results to prove Lemma \ref{Lemma:AuxMult}. Let us begin by showing that the second derivative of $E_\epsilon$ is coercive on a certain subspace of $H^1$ if $\epsilon$ is sufficiently small.
\begin{lem}\label{Lemma:Coercive}
There exists $\epsilon_0 \in \ooi{0, 1}$ such that
$
	\Dif^2_{z_{\epsilon, \xi}} E_\epsilon \parens{u, u}
	\gtrsim
	\norm{u}_{H^1}^2
$
for every
$\parens{\epsilon, \xi} \in \ooi{0, \epsilon_0} \times \real^3$
and
$
	u \in \parens{
		\mathrm{span}\set{z_{\epsilon, \xi}}
		\oplus
		\Tan_{z_{\epsilon, \xi}} \mathcal{Z}_\epsilon
	}^\perp
$.
\end{lem}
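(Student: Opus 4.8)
The plan is to write $E_\epsilon = I_\epsilon + \frac{\epsilon^3}{4}\int K_\epsilon \phi_{\epsilon, u^2} u^2$ and to show that the Bopp--Podolsky term contributes a harmless correction to the second derivative, so that the coercivity is inherited from the already-understood Hessian of the pure NLS functional $I_\epsilon$. Writing $z = z_{\epsilon, \xi}$ for brevity, Lemma~\ref{Lemma:EnergyClassC2} gives
\[
	\Dif^2_{z} E_\epsilon \parens{u, u}
	=
	\Dif^2_{z} I_\epsilon \parens{u, u}
	+
	\epsilon^3 \int K_\epsilon \parens{
		\phi_{\epsilon, z^2} u^2
		+
		2 \phi_{\epsilon, z u} z u
	},
\]
where $\Dif^2_{z} I_\epsilon \parens{u, u} = \norm{u}_{H^1_{V_\epsilon}}^2 - p \int z^{p-1} u^2$.

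First I would dispose of the $\epsilon^3$-term. On the one hand, Lemma~\ref{Lemma:FourTerms} bounds it in absolute value by $C \epsilon^3 \norm{u}_{H^1}^2$ for a constant $C$ independent of $\parens{\epsilon, \xi}$, so it is negligible for small $\epsilon$. On the other hand, it is in fact nonnegative: since $K_\epsilon \geq 0$ and $\kappa \geq 0$, the potential $\phi_{\epsilon, z^2} = \parens{K_\epsilon z^2} \ast \kappa_\epsilon$ is nonnegative, whence $\int K_\epsilon \phi_{\epsilon, z^2} u^2 \geq 0$; and the cross term equals $\int \brackets{\parens{K_\epsilon z u} \ast \kappa_\epsilon} \parens{K_\epsilon z u}$, the positive-semidefinite quadratic form associated with the positive operator $\Delta^2 - \Delta$ (whose Green kernel $\kappa$ has nonnegative Fourier transform), so $\int K_\epsilon \phi_{\epsilon, z u} z u \geq 0$ as well. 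Either observation reduces the statement to the coercivity of $\Dif^2_{z} I_\epsilon$ on the subspace $\parens{\mathrm{span}\set{z} \oplus \Tan_{z} \mathcal{Z}_\epsilon}^\perp$, uniformly in $\xi$ for small $\epsilon$.

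The coercivity of $\Dif^2 I_\epsilon$ is the NLS content of the argument, which I would import from \cite{AmbrosettiMalchiodi2006}. Its proof rests on the spectral structure of the linearized operator $L_{\lambda} := -\Delta + \lambda^2 - p U_{\lambda}^{p-1}$ at the soliton: by the nondegeneracy of $U$ (see \cite{Kwong1989}), $L_{\lambda}$ has a one-dimensional negative eigenspace, kernel exactly $\mathrm{span}\set{\partial_1 U_{\lambda}, \partial_2 U_{\lambda}, \partial_3 U_{\lambda}}$, and a spectral gap above zero. After translating $z$ to $U_{\lambda_{\epsilon \xi}}$, the constraints $u \perp z$ and $u \perp \Tan_{z}\mathcal{Z}_\epsilon$ correspond---up to errors of order $\epsilon$ controlled by Lemma~\ref{Lemma:z_eps,xi} and Remark~\ref{Remark:TaylorMult}---to orthogonality to the negative eigenspace and the kernel, so the spectral gap furnishes a positive lower bound. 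Uniformity in $\xi$ is available because, by \ref{V_1} and \ref{V_2}, the frequency $\lambda_{\epsilon \xi} = V\parens{\epsilon \xi}^{1/2}$ ranges over the compact interval $\cci{\parens{\inf V}^{1/2}, \parens{\sup V}^{1/2}}$, while the slow variation of $V_\epsilon$ makes the frozen-coefficient approximation error vanish as $\epsilon \to 0$.

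The main obstacle is precisely this uniform-in-$\xi$ coercivity for $I_\epsilon$: the pseudo-critical manifold $\mathcal{Z}_\epsilon$ is non-compact (parametrized by all of $\real^3$) and $V_\epsilon$ is non-constant, so one must simultaneously control the translation parameter and the slowly varying potential. Since these are exactly the facts established for the NLS functional in \cite{AmbrosettiMalchiodi2006}, I would reduce to that reference rather than reprove the spectral analysis; the Bopp--Podolsky term, being an $O\parens{\epsilon^3}$ nonnegative correction, does not interfere with the bound.
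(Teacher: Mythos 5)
Your proposal is correct and takes essentially the same route as the paper: both decompose $\Dif^2_{z_{\epsilon,\xi}} E_\epsilon$ as $\Dif^2_{z_{\epsilon,\xi}} I_\epsilon$ plus the Bopp--Podolsky term, bound that term by $\epsilon^3 \norm{u}_{H^1}^2$ via Lemma \ref{Lemma:FourTerms} (with Lemmas \ref{Lemma:EnergyClassC2} and \ref{Lemma:z_eps,xi}), and then invoke the uniform NLS coercivity result \cite[Lemma 8.9]{AmbrosettiMalchiodi2006}. Your extra observations (nonnegativity of the perturbation and the spectral sketch behind the cited lemma) are sound but supplementary to what the paper's proof actually uses.
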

\begin{proof}
Due to Lemmas \ref{Lemma:FourTerms}, \ref{Lemma:EnergyClassC2} and \ref{Lemma:z_eps,xi},
\[
\abs*{
	\Dif^2_{z_{\epsilon, \xi}} E_\epsilon \parens{u, u}
	-
	\Dif^2_{z_{\epsilon, \xi}} I_\epsilon \parens{u, u}
}
\leq
\epsilon^3
\int \abs*{
	K_\epsilon \parens{
		\phi_{\epsilon, z_{\epsilon, \xi}^2} u^2
		+
		2 \phi_{\epsilon, z_{\epsilon, \xi} u} z_{\epsilon, \xi} u
}}
\lesssim
\epsilon^3 \norm{u}_{H^1}^2.
\]
In view of this estimate, the result follows from \cite[Lemma 8.9]{AmbrosettiMalchiodi2006}.
\end{proof}

Let $R_\epsilon \colon \parens{H^1_{V_\epsilon}}'\to H^1_{V_\epsilon}$ denote the Riesz isomorphism and let $A_{\epsilon, \xi} \colon H^1 \to H^1$, $L_{\epsilon, \xi} \colon W_{\epsilon, \xi} \to W_{\epsilon, \xi}$ be given by
\[
	A_{\epsilon, \xi} \parens{u}
	=
	R_\epsilon \parens*{
		\Dif_{z_{\epsilon, \xi}}^2 E_\epsilon \parens{u, \cdot}
	}
	\quad\text{and}\quad
	L_{\epsilon, \xi} \parens{w}
	=
	\Pi_{\epsilon, \xi} \circ A_{\epsilon, \xi} \parens{w}.
\]
We proceed to a sufficient condition for the invertibility of $L_{\epsilon, \xi}$ with arguments loosely based on the proof of \cite[Lemma 8.10]{AmbrosettiMalchiodi2006} (see also \cite[Lemma 3.2]{IanniVaira2008}).

\begin{lem} \label{Lemma:BoundNormInverseL}
There exists $\epsilon_0 \in \ooi{0, 1}$ such that $L_{\epsilon, \xi}$ is invertible and
$
	\norm{L_{\epsilon, \xi}^{-1}}_{\End\parens{W_{\epsilon, \xi}}} \lesssim 1
$
for every
$\parens{\epsilon, \xi} \in \ooi{0, \epsilon_0} \times \real^3$.
\end{lem}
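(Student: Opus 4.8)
The plan is to treat $L_{\epsilon,\xi}$ as a small perturbation of the analogous operator for the pure Schrödinger functional $I_\epsilon$ and to transfer the invertibility from the latter, which is available in the literature. Concretely, let $A^0_{\epsilon,\xi}\colon H^1\to H^1$ and $L^0_{\epsilon,\xi}\colon W_{\epsilon,\xi}\to W_{\epsilon,\xi}$ be defined exactly as $A_{\epsilon,\xi}$ and $L_{\epsilon,\xi}$ but with $I_\epsilon$ in place of $E_\epsilon$, namely $A^0_{\epsilon,\xi}\parens{u}=R_\epsilon\parens*{\Dif^2_{z_{\epsilon,\xi}}I_\epsilon\parens{u,\cdot}}$ and $L^0_{\epsilon,\xi}=\Pi_{\epsilon,\xi}\circ A^0_{\epsilon,\xi}|_{W_{\epsilon,\xi}}$. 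Since $z_{\epsilon,\xi}$ solves the frozen equation $(P_{\lambda_{\epsilon\xi}})$, the manifold $\mathcal{Z}_\epsilon$ is precisely the manifold of pseudo-critical points associated to $I_\epsilon$, so \cite[Lemma 8.10]{AmbrosettiMalchiodi2006} applies and furnishes $\epsilon_0\in\ooi{0,1}$ together with a constant $C_0$ such that $L^0_{\epsilon,\xi}$ is invertible and $\norm{\parens{L^0_{\epsilon,\xi}}^{-1}}_{\End\parens{W_{\epsilon,\xi}}}\leq C_0$ for every $\parens{\epsilon,\xi}\in\ooi{0,\epsilon_0}\times\real^3$.

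The next step is to bound $\norm{L_{\epsilon,\xi}-L^0_{\epsilon,\xi}}_{\End\parens{W_{\epsilon,\xi}}}$. Since $E_\epsilon$ is obtained from $I_\epsilon$ by adding the term of order $\epsilon^3$, the formula for $\Dif^2 E_\epsilon$ in Lemma \ref{Lemma:EnergyClassC2} gives
\[
	\Dif^2_{z_{\epsilon,\xi}}E_\epsilon\parens{u,v}
	-
	\Dif^2_{z_{\epsilon,\xi}}I_\epsilon\parens{u,v}
	=
	\epsilon^3\int K_\epsilon\brackets*{
		\phi_{\epsilon,z_{\epsilon,\xi}^2}uv
		+
		2\phi_{\epsilon,z_{\epsilon,\xi}u}z_{\epsilon,\xi}v
	}
\]
for every $u,v\in H^1$. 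Lemma \ref{Lemma:FourTerms} together with the bound $\norm{z_{\epsilon,\xi}}_{H^1}\lesssim 1$ from Lemma \ref{Lemma:z_eps,xi} shows that the right-hand side is $\lesssim\epsilon^3\norm{u}_{H^1}\norm{v}_{H^1}$, uniformly in $\xi$. Passing through the Riesz representation, recalling that $\norm{\cdot}_{H^1_{V_\epsilon}}$ is uniformly equivalent to $\norm{\cdot}_{H^1}$ by \ref{V_1} and \ref{V_2}, and using that $\Pi_{\epsilon,\xi}$ has operator norm at most $1$, I obtain a constant $C_1$ with $\norm{L_{\epsilon,\xi}-L^0_{\epsilon,\xi}}_{\End\parens{W_{\epsilon,\xi}}}\leq C_1\epsilon^3$ for every such $\parens{\epsilon,\xi}$.

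To conclude, I would shrink $\epsilon_0$ so that $C_0C_1\epsilon_0^3\leq 1/2$ and factor
\[
	L_{\epsilon,\xi}
	=
	L^0_{\epsilon,\xi}\brackets*{
		\Id+\parens{L^0_{\epsilon,\xi}}^{-1}\parens{L_{\epsilon,\xi}-L^0_{\epsilon,\xi}}
	}.
\]
For $\epsilon\in\ooi{0,\epsilon_0}$ the operator inside the brackets differs from $\Id$ by one of norm at most $C_0C_1\epsilon^3\leq 1/2$, hence is invertible via a Neumann series with inverse of norm at most $2$. Therefore $L_{\epsilon,\xi}$ is invertible and $\norm{L_{\epsilon,\xi}^{-1}}_{\End\parens{W_{\epsilon,\xi}}}\leq 2C_0\lesssim 1$, uniformly in $\parens{\epsilon,\xi}\in\ooi{0,\epsilon_0}\times\real^3$.

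The main obstacle is the first step, i.e.\ extracting from \cite[Lemma 8.10]{AmbrosettiMalchiodi2006} a bound on $\norm{\parens{L^0_{\epsilon,\xi}}^{-1}}$ that is uniform in both $\epsilon$ and the translation parameter $\xi$; the uniformity in $\xi$ is what makes the perturbation argument work globally. Its genuine content is the non-degeneracy of $U$ — that the kernel of the linearization of $(P_1)$ at $U$ is exactly $\mathrm{span}\set{\partial_1 U,\partial_2 U,\partial_3 U}$ — combined with the coercivity already recorded in Lemma \ref{Lemma:Coercive}. Should the direct citation prove delicate to align with our normalization, the same estimate can be obtained by a self-contained contradiction argument: assuming sequences $\epsilon_n\to 0$, $\xi_n$ and unit vectors $w_n\in W_{\epsilon_n,\xi_n}$ with $\norm{L_{\epsilon_n,\xi_n}w_n}_{H^1}\to 0$, one recenters at $\xi_n$, passes to a weak limit, and contradicts the non-degeneracy of $U$.
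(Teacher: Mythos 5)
Your proof is correct, but it takes a genuinely different route from the paper. The paper does not black-box the invertibility of the unperturbed (NLS) operator; instead it re-runs the proof of \cite[Lemma 8.10]{AmbrosettiMalchiodi2006} with the Bopp--Podolsky term folded in: it first shows $\norm{z_{\epsilon,\xi}-\Pi_{\epsilon,\xi}\parens{z_{\epsilon,\xi}}}_{H^1}\lesssim\epsilon$, then that $A_{\epsilon,\xi}\parens{z_{\epsilon,\xi}}\approx-\parens{p-1}z_{\epsilon,\xi}$ up to an $O\parens{\epsilon}$ error (the $\epsilon^3$ nonlocal term being absorbed into that error), and finally splits $W_{\epsilon,\xi}$ as $\mathrm{span}\set{\Pi_{\epsilon,\xi}\parens{z_{\epsilon,\xi}}}\oplus\parens{\mathrm{span}\set{z_{\epsilon,\xi}}\oplus\Tan_{z_{\epsilon,\xi}}\mathcal{Z}_\epsilon}^\perp$, where $L_{\epsilon,\xi}$ is approximately $-\parens{p-1}\Id$ on the first factor and coercive on the second by Lemma \ref{Lemma:Coercive}; invertibility with a uniform bound follows from this block structure. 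You instead transfer invertibility wholesale: $L_{\epsilon,\xi}=L^0_{\epsilon,\xi}+O\parens{\epsilon^3}$ in operator norm (your perturbation estimate via Lemma \ref{Lemma:FourTerms}, Lemma \ref{Lemma:z_eps,xi}, the Riesz map and $\norm{\Pi_{\epsilon,\xi}}\leq 1$ is sound), then Neumann series. Your version is shorter and isolates the Bopp--Podolsky correction cleanly, but it leans on the \emph{statement} of \cite[Lemma 8.10]{AmbrosettiMalchiodi2006} with uniformity in the translation parameter $\xi\in\real^3$, exactly the caveat you flag; the paper only leans on \cite[Lemmas 8.8, 8.9]{AmbrosettiMalchiodi2006} (through Lemmas \ref{Lemma:PseudoCriticalPoints} and \ref{Lemma:Coercive}), which is the same kind of reliance, so neither route is more self-contained in substance. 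Your fallback — a recentering/compactness contradiction argument resting on the non-degeneracy of $U$ (noting that $\lambda_{\epsilon_n\xi_n}$ stays in a compact subinterval of $\ooi{0,\infty}$ by \ref{V_1}, \ref{V_2}, so one may pass to a limit $\lambda$) — would indeed patch that dependence if the citation did not align.
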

\begin{proof}[Proof of Lemma \ref{Lemma:BoundNormInverseL}]
\emph{First step.} Let us prove that
$
	\norm{
		z_{\epsilon, \xi} - \Pi_{\epsilon, \xi}\parens{z_{\epsilon, \xi}}
	}_{H^1}
	\lesssim
	\epsilon
$
for every $\parens{\epsilon, \xi} \in \ooi{0, 1} \times \real^3$. Due to Lemma \ref{Lemma:z_eps,xi},
\begin{multline*}
\abs*{
	\angles{
		z_{\epsilon, \xi} \mid \dot{z}_{\epsilon, \xi, i}
	}_{H^1_{\lambda_{\epsilon \xi}}}
}
\leq
\abs*{
	\angles{
		z_{\epsilon, \xi} \mid \dot{z}_{\epsilon, \xi, i} + \partial_i z_{\epsilon, \xi}
	}_{H^1_{\lambda_{\epsilon \xi}}}
}
+
\underbrace{
	\abs*{
		\angles{
			z_{\epsilon, \xi} \mid \partial_i z_{\epsilon, \xi}
		}_{H^1_{\lambda_{\epsilon \xi}}}
	}
}_{=0}
\lesssim
\epsilon.
\end{multline*}

\emph{Second step.} We want to show that
$
	\norm{
		A_{\epsilon, \xi} \parens{z_{\epsilon, \xi}}
		+
		\parens{p-1} z_{\epsilon, \xi}
	}_{H^1}
	\lesssim
	\epsilon
$
for every $\parens{\epsilon, \xi} \in \ooi{0, 1} \times \real^3$. Indeed, in light of Lemma \ref{Lemma:EnergyClassC2} and \eqref{Equation:PDE_z_epsilon,xi},
\begin{multline*}
	\Dif^2_{z_{\epsilon, \xi}} E_\epsilon \parens{z_{\epsilon, \xi}, u}
	+
	\parens{p - 1} \angles{
		z_{\epsilon, \xi} \mid u
	}_{H^1_{\lambda_{\epsilon \xi}}}
	=
	\\
	=
	\int \cbrackets{
		\brackets{V_\epsilon - V \parens{\epsilon \xi}}
		z_{\epsilon, \xi} u
	}
	+
	3\epsilon^3 \int \parens*{
			K_\epsilon
			\phi_{\epsilon, z_{\epsilon, \xi}^2}
			z_{\epsilon, \xi}
			u
	}.
\end{multline*}
On one hand, it follows from \eqref{Equation:ExponentialDecay} and Remark \ref{Remark:TaylorMult} that
\[
\int \abs{
	\brackets{V_\epsilon - V \parens{\epsilon \xi}} z_{\epsilon, \xi} u
}
\lesssim
\epsilon
\int
	\abs{x - \xi}
	z_{\epsilon, \xi} \parens{x}
	\abs{u \parens{x}}
\dif x
\lesssim
\epsilon \norm{u}_{H^1}.
\]
On the other hand, it suffices to argue as in the proof of Lemma \ref{Lemma:Coercive} to prove that
$
\int \abs{
	K_\epsilon \phi_{\epsilon, z_{\epsilon, \xi}^2} z_{\epsilon, \xi} u
}
\lesssim
\epsilon^3 \norm{u}_{H^1}
$.

\emph{Conclusion.}
Clearly,
\begin{multline*}
	L_{\epsilon, \xi} \circ \Pi_{\epsilon, \xi} \parens{z_{\epsilon, \xi}}
	=
	-\parens{p-1} \Pi_{\epsilon, \xi} \parens{z_{\epsilon, \xi}}
	+
	\Pi_{\epsilon, \xi} \parens*{
		A_{\epsilon, \xi} \parens{z_{\epsilon, \xi}}
		+
		\parens{p-1} z_{\epsilon, \xi}
	}
	+
	\\
	+
	\Pi_{\epsilon, \xi} \circ A_{\epsilon, \xi} \parens*{
		\Pi_{\epsilon, \xi} \parens{z_{\epsilon, \xi}}
		-
		z_{\epsilon, \xi}
	}.
\end{multline*}
In view of the first and second steps, 
\[
\abs*{
	L_{\epsilon, \xi} \circ \Pi_{\epsilon, \xi} \parens{z_{\epsilon, \xi}}
	+
	\parens{p-1} \Pi_{\epsilon, \xi} \parens{z_{\epsilon, \xi}}
}
\lesssim
\epsilon.
\]
Therefore,
\[
	\norm{L_{\epsilon, \xi} + \parens{p-1} \Id_{V_{\epsilon, \xi, 1}}}_{
		\mathcal{L} \parens{V_{\epsilon, \xi, 1}, W_{\epsilon, \xi}}
	}
	\lesssim \epsilon
	\quad\text{and}\quad
	\norm{
		L_{\epsilon, \xi} + \Pi_{\epsilon, \xi} \circ A_{\epsilon, \xi}
	}_{
		\mathcal{L} \parens{V_{\epsilon, \xi, 2}, W_{\epsilon, \xi}}
	}
	\lesssim \epsilon,
\]
where
\[
	V_{\epsilon,\xi,1}
	:=
	\mathrm{span} \set{\Pi_{\epsilon, \xi} \parens{z_{\epsilon,\xi}}}
	\quad\text{and}\quad
	V_{\epsilon,\xi,2}
	:=
	\parens*{
		\mathrm{span} \set{z_{\epsilon,\xi}} \oplus \Tan_{z_{\epsilon, \xi}} \mathcal{Z}_\epsilon
	}^\perp.
\]
Finally, the result follows from Lemma \ref{Lemma:Coercive}.
\end{proof}

We still need two lemmas before proceeding to the proof of Lemma \ref{Lemma:AuxMult}.

\begin{lem}\label{Lemma:ApproxDer}
We have
\[
\norm*{
	\Dif_{\parens{z_{\epsilon, \xi} + w}} E_\epsilon
	-
	\Dif_{z_{\epsilon, \xi}} E_\epsilon
}_{H^{-1}}
\lesssim
\norm{w}_{H^1}
+
\norm{w}_{H^1}^p;
\]
\[
\norm*{
	\Dif_{\parens{z_{\epsilon, \xi} + w}} E_\epsilon
	-
	\Dif_{z_{\epsilon, \xi}} E_\epsilon
	-
	\Dif^2_{z_{\epsilon, \xi}} E_\epsilon \parens{w, \cdot}
}_{H^{-1}}
\lesssim
\norm{w}_{H^1}^2
+
\norm{w}_{H^1}^p
\]
and
\[
\norm*{
	\Dif^2_{\parens{z_{\epsilon, \xi} + w}} E_\epsilon \parens{u, \cdot}
	-
	\Dif^2_{z_{\epsilon, \xi}} E_\epsilon \parens{u, \cdot}
}_{H^{-1}}
\lesssim
\parens*{
	\norm{w}_{H^1}
	+
	\norm{w}_{H^1}^2
	+
	\norm{w}^{p-1}_{H^1}
}
\norm{u}_{H^1}
\]
for every $\parens{\epsilon, \xi} \in \ooi{0, 1} \times \real^3$ and $u, w \in H^1$.
\end{lem}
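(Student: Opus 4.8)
The plan is to decompose $E_\epsilon = I_\epsilon + B_\epsilon$, where $B_\epsilon(u) := \frac{\epsilon^3}{4} \int(K_\epsilon \phi_{\epsilon, u^2} u^2)$ is the Bopp--Podolsky perturbation, to establish the three estimates for $I_\epsilon$ and $B_\epsilon$ separately, and then to add them. For the pure NLS functional $I_\epsilon$, the quadratic part $\frac12 \norm{\cdot}_{H^1_{V_\epsilon}}^2$ has an affine differential: its contribution to the first estimate is exactly the term $\angles{w \mid \cdot}_{H^1_{V_\epsilon}}$ (which accounts for the $\norm{w}_{H^1}$ there), and it cancels identically against $\Dif^2_{z_{\epsilon,\xi}} I_\epsilon(w, \cdot)$, and against itself, in the second and third estimates. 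Thus all the work for $I_\epsilon$ sits in the nonlinearity $\frac1{p+1}\norm{\cdot}_{L^{p+1}}^{p+1}$, exactly as in the semiclassical NLS analysis of \cite{AmbrosettiMalchiodi2006}.

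For the nonlinearity I would argue pointwise and then dualize. With $z := z_{\epsilon, \xi}$, the three integrands to control are $\abs{z + w}^{p-1}(z + w) - \abs{z}^{p-1} z$, its first-order Taylor remainder, and $\abs{z + w}^{p-1} - \abs{z}^{p-1}$, which are bounded respectively by the three inequalities of Lemma \ref{Lemma:Elementary}. The key observation is that $z_{\epsilon, \xi}$ is uniformly bounded --- indeed $0 < z_{\epsilon, \xi} \leq \parens{\sup V}^{1/\parens{p-1}} \sup U$ --- so after rescaling by this bound and using the homogeneity of $t \mapsto \abs{t}^{p-1} t$ and $t \mapsto \abs{t}^{p-1}$, the hypothesis $a \in \cci{-1, 1}$ of Lemma \ref{Lemma:Elementary} is met with $a$ proportional to $z_{\epsilon, \xi}(x)$, uniformly in $(\epsilon, \xi)$. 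I would then integrate the resulting pointwise bounds against a test function $w_1 \in H^1$ and apply Hölder's inequality with the exponent splittings $\frac{p}{p+1} + \frac{1}{p+1} = 1$, $\frac23 + \frac13 = 1$ and $\frac{p-1}{p+1} + \frac1{p+1} + \frac1{p+1} = 1$, followed by the Sobolev embeddings $H^1 \hookrightarrow L^{p+1}, L^3$ (valid since $p + 1 \in \ooi{2, 6}$). Tracking the two cases $p \leq 2$ and $p > 2$ throughout yields the $H^{-1}$ bounds $\norm{w}_{H^1}^p$, $\norm{w}_{H^1}^p$ and $\norm{w}_{H^1}^{p-1}\norm{u}_{H^1}$, together with lower-order powers (arising when $p > 2$) that either appear explicitly in the stated right-hand sides or are absorbed into them.

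For the quartic term $B_\epsilon$ I would exploit its multilinear structure. Writing $B_\epsilon(u) = \frac{\epsilon^3}{4} T(u, u, u, u)$, where $T(u_1, u_2, u_3, u_4) = \int(K_\epsilon \phi_{\epsilon, u_1 u_2} u_3 u_4)$ is the symmetric four-linear form of Remark \ref{Remark:phi}, each of the three differences expands into a finite sum of terms $T(\cdots)$ in which $w$ occupies one, two, or three slots and $z$ fills the rest; for example $\Dif^2_{z + w} B_\epsilon(u, \cdot) - \Dif^2_z B_\epsilon(u, \cdot) = 3\epsilon^3 \brackets{2 T(z, w, u, \cdot) + T(w, w, u, \cdot)}$. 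Lemma \ref{Lemma:FourTerms} bounds each such term by the product of the $H^1$-norms of its four arguments, and $\norm{z_{\epsilon, \xi}}_{H^1} \lesssim 1$ (Lemma \ref{Lemma:z_eps,xi}) then produces the contributions $\epsilon^3(\norm{w}_{H^1} + \norm{w}_{H^1}^2 + \norm{w}_{H^1}^3)$, $\epsilon^3(\norm{w}_{H^1}^2 + \norm{w}_{H^1}^3)$ and $\epsilon^3(\norm{w}_{H^1} + \norm{w}_{H^1}^2)\norm{u}_{H^1}$ to the three estimates. Notably, $B_\epsilon$ is precisely the source of the $\norm{w}_{H^1}^2$ term in the third estimate, which is absent from the NLS case; the cubic contribution $\epsilon^3\norm{w}_{H^1}^3$ in the first two estimates is dominated by $\norm{w}_{H^1}^2 + \norm{w}_{H^1}^p$ whenever $p \geq 3$ or $\norm{w}_{H^1} \leq 1$, the latter being the only regime used later in Lemma \ref{Lemma:AuxMult}.

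I expect the main obstacle to be the bookkeeping inside the nonlinear term: organizing the dichotomy $p \leq 2$ versus $p > 2$, choosing the Hölder exponents so that every factor lands in a range covered by the Sobolev embeddings, and --- most importantly --- verifying that all implicit constants are independent of $(\epsilon, \xi)$, which rests on the uniform pointwise bound and the uniform $H^1$-bounds on $z_{\epsilon, \xi}$ supplied by Lemma \ref{Lemma:z_eps,xi}. By contrast, once Lemma \ref{Lemma:FourTerms} and the symmetry of $T$ are in hand, the quartic term is routine.
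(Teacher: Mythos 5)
Your proposal is correct and follows essentially the same route as the paper's proof: the paper likewise expands each difference via the explicit formulas of Lemma \ref{Lemma:EnergyClassC2} (equivalently, treating the quadratic, quartic and $L^{p+1}$ parts separately), bounds the Bopp--Podolsky terms with Lemmas \ref{Lemma:FourTerms} and \ref{Lemma:z_eps,xi}, and bounds the nonlinear term with Lemma \ref{Lemma:Elementary} and the Sobolev embeddings. The differences are purely presentational --- the paper proves only the first estimate and declares the others similar --- and your extra care with the hypothesis $a \in \cci{-1,1}$ of Lemma \ref{Lemma:Elementary} and with the cubic contribution $\epsilon^3 \norm{w}_{H^1}^3$ (which the paper silently absorbs, implicitly using the regime $\norm{w}_{H^1} \lesssim 1$ in which the lemma is later applied) only makes the argument more complete.
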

\begin{proof}
The estimates are obtained similarly, so we only prove that the first one holds. In view of Remark \ref{Remark:phi} and Lemma \ref{Lemma:EnergyClassC2},
\begin{multline*}
	\Dif_{\parens{z_{\epsilon, \xi} + w}} E_\epsilon \parens{u}
	-
	\Dif_{z_{\epsilon, \xi}} E_\epsilon \parens{u}
	=
	\angles{w \mid u}_{H^1_{V_\epsilon}}
	+
	\epsilon^3
	\int \parens{
		K_\epsilon
		\phi_{\epsilon, z_{\epsilon, \xi}^2}
		u w
	}
	+
	\\
	+
	2\epsilon^3
	\int \brackets*{
		K_\epsilon
		\phi_{\epsilon, z_{\epsilon, \xi} w}
		u
		\parens{z_{\epsilon, \xi} + w}
	}
	+
	\epsilon^3
	\int \brackets*{
		K_\epsilon
		\phi_{\epsilon, w^2} u \parens{z_{\epsilon, \xi} + w}
	}
	+
	\\
	-
\int \cbrackets*{
	\brackets*{
		\parens{z_{\epsilon, \xi}+w}
		\abs{z_{\epsilon, \xi}+w}^{p-1}
		-
		z_{\epsilon, \xi}^p
	}
	u
}.
\end{multline*}
On one hand, it follows from Lemmas \ref{Lemma:FourTerms} and \ref{Lemma:z_eps,xi} that
\begin{multline*}
\left|
	\epsilon^3
	\int \parens{
		K_\epsilon
		\phi_{\epsilon, z_{\epsilon, \xi}^2}
		u w
	}
	+
	2\epsilon^3
	\int \brackets*{
		K_\epsilon
		\phi_{\epsilon, z_{\epsilon, \xi} w}
		u
		\parens{z_{\epsilon, \xi} + w}
	}
	+
\right.
\\
\left.
	+
	\epsilon^3
	\int \brackets*{
		K_\epsilon
		\phi_{\epsilon, w^2} u \parens{z_{\epsilon, \xi} + w}
	}
\right|
\lesssim
\epsilon^3 \norm{w}_{H^1} \norm{u}_{H^1}.
\end{multline*}
On the other hand, the Sobolev embeddings and Lemma \ref{Lemma:Elementary} imply
\[
\abs*{\int \cbrackets*{
	\brackets*{
		\parens{z_{\epsilon, \xi}+w}
		\abs{z_{\epsilon, \xi}+w}^{p-1}
		-
		z_{\epsilon, \xi}^p
	}
	u
}}
	\lesssim
	\parens*{
		\norm{w}_{H^1} + \norm{w}_{H^1}^p
	}
	\norm{u}_{H^1},
\]
hence the result.
\end{proof}

We proceed to the last preliminary result.

\begin{lem}\label{Lemma:Estimate2ndDer}
It holds that
$
\norm{
	\Dif^2_{z_{\epsilon, \xi}} E_\epsilon \parens{
		\dot{z}_{\epsilon, \xi, i}, \cdot
	}
}_{H^{-1}}
\lesssim
\epsilon
$
for every $\parens{\epsilon, \xi} \in \ooi{0, 1} \times \real^3$ and $i \in \set{1, 2, 3}$.
\end{lem}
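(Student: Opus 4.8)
The plan is to use that $E_\epsilon$ is a perturbation of $I_\epsilon$ of order $\epsilon^3$ and that $\dot{z}_{\epsilon, \xi, i}$ agrees with $-\partial_i z_{\epsilon, \xi}$ up to an error of order $\epsilon$ (by \eqref{Equation:z_epsilon,xi:2}); the crucial computation is that $\Dif^2_{z_{\epsilon, \xi}} I_\epsilon \parens{\partial_i z_{\epsilon, \xi}, \cdot}$ collapses, via the differentiated equation \eqref{Equation:PDE_z_epsilon,xi}, to a term involving only the mismatch between the weights $V_\epsilon$ and $V \parens{\epsilon \xi}$. Fix $v \in H^1$. First I would use Lemma \ref{Lemma:EnergyClassC2} to write
\[
\Dif^2_{z_{\epsilon, \xi}} E_\epsilon \parens{\dot{z}_{\epsilon, \xi, i}, v}
-
\Dif^2_{z_{\epsilon, \xi}} I_\epsilon \parens{\dot{z}_{\epsilon, \xi, i}, v}
=
\epsilon^3 \int \brackets*{
	K_\epsilon \parens*{
		\phi_{\epsilon, z_{\epsilon, \xi}^2} \dot{z}_{\epsilon, \xi, i} v
		+
		2 \phi_{\epsilon, z_{\epsilon, \xi} \dot{z}_{\epsilon, \xi, i}} z_{\epsilon, \xi} v
	}
},
\]
and bound the right-hand side by Lemma \ref{Lemma:FourTerms}, using the uniform bounds $\norm{z_{\epsilon, \xi}}_{H^1} \lesssim 1$ and $\norm{\dot{z}_{\epsilon, \xi, i}}_{H^1} \lesssim 1$ (which follow from \eqref{Equation:z_epsilon,xi:1} and \eqref{Equation:z_epsilon,xi:2}), to see that this difference is $\lesssim \epsilon^3 \norm{v}_{H^1}$. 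It therefore remains to estimate $\Dif^2_{z_{\epsilon, \xi}} I_\epsilon \parens{\dot{z}_{\epsilon, \xi, i}, v}$.

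Next, since $\Dif^2_{z_{\epsilon, \xi}} I_\epsilon$ is a uniformly bounded bilinear form (by Hölder's inequality together with the Sobolev embedding $H^1 \hookrightarrow L^{p+1}$ and $\norm{z_{\epsilon, \xi}}_{L^{p+1}} \lesssim 1$), bilinearity and \eqref{Equation:z_epsilon,xi:2} let me replace $\dot{z}_{\epsilon, \xi, i}$ by $-\partial_i z_{\epsilon, \xi}$ at the cost of a term $\lesssim \norm{\dot{z}_{\epsilon, \xi, i} + \partial_i z_{\epsilon, \xi}}_{H^1} \norm{v}_{H^1} \lesssim \epsilon \norm{v}_{H^1}$. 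Thus I am reduced to controlling $\Dif^2_{z_{\epsilon, \xi}} I_\epsilon \parens{\partial_i z_{\epsilon, \xi}, v}$.

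The crux is the following cancellation. Differentiating \eqref{Equation:PDE_z_epsilon,xi} in the spatial variable gives
\[
	-\Delta \partial_i z_{\epsilon, \xi}
	+
	\lambda_{\epsilon \xi}^2 \partial_i z_{\epsilon, \xi}
	=
	p\, z_{\epsilon, \xi}^{p - 1} \partial_i z_{\epsilon, \xi}
	~\text{in}~\real^3,
\]
which says precisely that $p \int \parens{z_{\epsilon, \xi}^{p - 1} \partial_i z_{\epsilon, \xi} v} = \angles{\partial_i z_{\epsilon, \xi} \mid v}_{H^1_{\lambda_{\epsilon \xi}}}$. Substituting this into the expression for $\Dif^2_{z_{\epsilon, \xi}} I_\epsilon$ from Lemma \ref{Lemma:EnergyClassC2} makes the nonlinear term cancel against the $H^1_{\lambda_{\epsilon \xi}}$ inner product, so that only the discrepancy between the zeroth-order weights survives:
\[
	\Dif^2_{z_{\epsilon, \xi}} I_\epsilon \parens{\partial_i z_{\epsilon, \xi}, v}
	=
	\angles{\partial_i z_{\epsilon, \xi} \mid v}_{H^1_{V_\epsilon}}
	-
	\angles{\partial_i z_{\epsilon, \xi} \mid v}_{H^1_{\lambda_{\epsilon \xi}}}
	=
	\int \cbrackets*{
		\brackets{V_\epsilon - V \parens{\epsilon \xi}}
		\partial_i z_{\epsilon, \xi} v
	}.
\]
This is exactly the type of term handled in the second step of Lemma \ref{Lemma:BoundNormInverseL}: Remark \ref{Remark:TaylorMult} gives $\abs{V_\epsilon \parens{x} - V \parens{\epsilon \xi}} \lesssim \epsilon \abs{x - \xi}$, while the exponential decay \eqref{Equation:ExponentialDecay} of $\abs{\nabla U}$ (after the change of variables $y = \lambda_{\epsilon \xi} \parens{x - \xi}$ and using that $\lambda_{\epsilon \xi}$ is bounded above and below by \ref{V_1}, \ref{V_2}) yields $\norm{\abs{\cdot - \xi}\, \partial_i z_{\epsilon, \xi}}_{L^2} \lesssim 1$ uniformly; Cauchy--Schwarz then bounds the displayed integral by $\lesssim \epsilon \norm{v}_{H^1}$.

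Collecting the three contributions gives $\norm{\Dif^2_{z_{\epsilon, \xi}} E_\epsilon \parens{\dot{z}_{\epsilon, \xi, i}, \cdot}}_{H^{-1}} \lesssim \epsilon$, uniformly in $\parens{\epsilon, \xi} \in \ooi{0, 1} \times \real^3$ and $i \in \set{1, 2, 3}$. I expect the main obstacle to be the bookkeeping in the third step, namely correctly recognizing that the differentiated equation forces the nonlinear term to cancel against the $H^1_{\lambda_{\epsilon \xi}}$ inner product so that only the weight mismatch remains, rather than any genuine analytic difficulty; once that algebraic cancellation is identified, the remaining estimate reuses the decay-and-Lipschitz argument already carried out in the proof of Lemma \ref{Lemma:BoundNormInverseL}.
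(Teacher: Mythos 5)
Your proof is correct, and it hinges on the same cancellation as the paper's, but the decomposition is different. The paper subtracts $\Dif^2_{z_{\epsilon, \xi}} \overline{I}_{\lambda_{\epsilon \xi}} \parens{\dot{z}_{\epsilon, \xi, i}, \cdot}$ from $\Dif^2_{z_{\epsilon, \xi}} E_\epsilon \parens{\dot{z}_{\epsilon, \xi, i}, \cdot}$: since the two functionals share the same nonlinearity, the difference is exactly the weight mismatch $\int \cbrackets{\brackets{V_\epsilon - V \parens{\epsilon \xi}} \dot{z}_{\epsilon, \xi, i} u}$ plus the $\epsilon^3$ Bopp--Podolsky term, and the subtracted piece is claimed to vanish because $\nabla \overline{I}_{\lambda_{\epsilon \xi}} \parens{z_{\epsilon, \xi}} = 0$ for every $\xi$, i.e., by differentiating the critical-point identity along the manifold parameter. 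You instead subtract $I_\epsilon$, pay a cost of order $\epsilon$ to replace $\dot{z}_{\epsilon, \xi, i}$ by $-\partial_i z_{\epsilon, \xi}$ via \eqref{Equation:z_epsilon,xi:2} (using the uniform boundedness of the bilinear form $\Dif^2_{z_{\epsilon, \xi}} I_\epsilon$), and then collapse $\Dif^2_{z_{\epsilon, \xi}} I_\epsilon \parens{\partial_i z_{\epsilon, \xi}, \cdot}$ to the weight mismatch through the spatially differentiated equation, i.e., through translation invariance of \eqref{P_lambda}. The paper's route is shorter (no swap, no boundedness of the form needed), but yours is tighter on one point: differentiating $\zeta \mapsto \nabla \overline{I}_{\lambda_{\epsilon \zeta}} \parens{z_{\epsilon, \zeta}} \equiv 0$ in $\zeta_i$ also produces a term proportional to $\epsilon \, \partial_i V \parens{\epsilon \xi} \int z_{\epsilon, \xi} u$, coming from the $\zeta$-dependence of the functional itself through $\lambda_{\epsilon \zeta}$, so $\Dif^2_{z_{\epsilon, \xi}} \overline{I}_{\lambda_{\epsilon \xi}} \parens{\dot{z}_{\epsilon, \xi, i}, \cdot}$ is only of order $\epsilon$ in $H^{-1}$ rather than exactly zero; this is harmless for the lemma (the claimed bound is $\epsilon$ anyway), but the cancellation you invoke — against $\partial_i z_{\epsilon, \xi}$, not $\dot{z}_{\epsilon, \xi, i}$ — is genuinely exact, with the order-$\epsilon$ discrepancy accounted for explicitly by \eqref{Equation:z_epsilon,xi:2}. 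Both arguments then end with the same two estimates: Remark \ref{Remark:TaylorMult} together with the exponential decay \eqref{Equation:ExponentialDecay} for the mismatch term, and Lemmas \ref{Lemma:FourTerms}, \ref{Lemma:z_eps,xi} for the $\epsilon^3$ term.
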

\begin{proof}
As
$
	\nabla \overline{I}_{\lambda_{\epsilon \xi}}
	\parens{z_{\epsilon, \xi}}
	=
	0
$
for every $\xi \in \real^3$, we deduce that
$
	\Dif^2_{z_{\epsilon, \xi}} \overline{I}_{\lambda_{\epsilon \xi}}
	\parens{\dot{z}_{\epsilon, \xi, i}, \cdot}
	=
	0
$.
Therefore, it follows from Lemma \ref{Lemma:EnergyClassC2} that
\begin{multline*}
\Dif^2_{z_{\epsilon, \xi}} E_\epsilon \parens{
	\dot{z}_{\epsilon, \xi, i}, u
}
=
\Dif^2_{z_{\epsilon, \xi}} E_\epsilon \parens{
	\dot{z}_{\epsilon, \xi, i}, u
}
-
\Dif^2_{z_{\epsilon, \xi}} \overline{I}_{\lambda_{\epsilon \xi}} \parens{
	\dot{z}_{\epsilon, \xi, i}, u
}
=
\\
=
\int \cbrackets{
	\brackets{V_\epsilon - V \parens{\epsilon \xi}}
	\dot{z}_{\epsilon, \xi, i}
	u
}
+
\epsilon^3
\int \brackets*{
	K_\epsilon \parens{
		\phi_{\epsilon, z_{\epsilon, \xi}^2}
		\dot{z}_{\epsilon, \xi, i} u
		+
		2 \phi_{\epsilon, z_{\epsilon, \xi} \dot{z}_{\epsilon, \xi, i}} z_{\epsilon, \xi} u
}}.
\end{multline*}
On one hand, it follows from \eqref{Equation:ExponentialDecay} and Remark \ref{Remark:TaylorMult} that
\[
\abs*{
	\int \cbrackets{
		\brackets{V_\epsilon - V \parens{\epsilon \xi}}
		\dot{z}_{\epsilon, \xi, i}
		u
	}
}
\lesssim
\epsilon \int
	\abs{x - \xi}
	\dot{z}_{\epsilon, \xi, i} \parens{x}
	u \parens{x}
\dif x
\lesssim
\epsilon \norm{u}_{H^1}.
\]
On the other hand, Lemmas \ref{Lemma:FourTerms} and \ref{Lemma:z_eps,xi}
imply
\[
\epsilon^3
\abs*{
	\int \brackets*{
		K_\epsilon \parens{
			\phi_{\epsilon, z_{\epsilon, \xi}^2}
			\dot{z}_{\epsilon, \xi, i} u
			+
			2 \phi_{\epsilon, z_{\epsilon, \xi}
			\dot{z}_{\epsilon, \xi, i}}
			z_{\epsilon, \xi} u
}}}
\lesssim
\epsilon^3 \norm{u}_{H^1}.
\]
\end{proof}

Let us finally prove Lemma \ref{Lemma:AuxMult} with arguments loosely based on those in the proof of \cite[Proposition 8.7]{AmbrosettiMalchiodi2006}.

\begin{proof}[Proof of Lemma \ref{Lemma:AuxMult}]
\emph{Setup.}
Take $\epsilon_0 \in \ooi{0, 1}$ as in Lemma \ref{Lemma:BoundNormInverseL} and fix $\bar{C} \in \ooi{0, \infty}$. We define
$
	\mathcal{H} \colon \mathcal{O} \to H^1 \times \real^3
$
as the application of class $C^1$ given by
\[
	\mathcal{H}\parens{\epsilon, \xi, w, \alpha}
	=
	\begin{pmatrix}
		\nabla E_\epsilon\parens{z_{\epsilon, \xi}+w}
		-
		\sum_{1\leq i\leq 3}\alpha_i\dot{z}_{\epsilon, \xi, i}
		\\
		\sum_{1 \leq i \leq 3} \angles{w \mid \dot{z}_{\epsilon, \xi, i}}_{H^1} e_i
	\end{pmatrix},
\]
where
\[
	\mathcal{O} := \set*{
		\parens{\epsilon, \xi, w, \alpha}
		\in
		\coi{0, \epsilon_0} \times \real^3 \times H^1 \times \real^3:
		\norm{w}_{H^1} \leq \bar{C} \brackets*{
			\epsilon \abs{\nabla V \parens{\epsilon \xi}}
			+
			\epsilon^2
		}
	}.\footnote{
		The reasoning for such a choice of a domain is explained in \cite[Section 3.3.2]{IanniVaira2008}
	}
\]
and $E_0 := \overline{I}_1$. Note that the auxiliary equation
\[
	\Pi_{\epsilon, \xi} \parens{
		\nabla E_\epsilon\parens{z_{\epsilon,\xi} + w}
	} = 0;
	\quad
	w \in W_{\epsilon, \xi}
\]
is solved if, and only if, $\mathcal{H}\parens{\epsilon,\xi,w,\alpha}=0$ for a certain $\alpha\in\real^3$.

\emph{A preliminary result.} We claim that, up to shrinking $\epsilon_0$, \[
	\Dif_{\parens{w,\alpha}} \mathcal{H}_{\epsilon, \xi}
	\colon
	H^1 \times \real^3
	\to
	H^1 \times \real^3
\]
is invertible and
\[
	\norm*{
		\Dif_{\parens{w,\alpha}} \mathcal{H}_{\epsilon, \xi} \parens{v, \beta}
	}_{H^1 \times \real^3}
	\gtrsim
	\norm{v}_{H^1} + \abs{\beta}
\]
for every
$
	\parens{\epsilon, \xi, w, \alpha} \in \mathcal{O}
$
and $\parens{v, \beta} \in H^1 \times \real^3$. Indeed, it follows from Lemma \ref{Lemma:ApproxDer} that
\begin{multline*}
	\norm*{
		\Dif_{\parens{w,\alpha}} \mathcal{H}_{\epsilon, \xi} \parens{v, \beta}
		-
		\begin{pmatrix}
			A_{\epsilon, \xi} \parens{v}
			-
			\sum_{1\leq i\leq 3}\beta_i\dot{z}_{\epsilon, \xi, i}
			\\
			\sum_{1 \leq i \leq 3} \angles{v \mid \dot{z}_{\epsilon, \xi, i}}_{H^1} e_i
		\end{pmatrix}
	}_{H^1 \times \real^3}
	\lesssim
	\\
	\lesssim
	\parens*{
		\norm{w}_{H^1}
		+
		\norm{w}_{H^1}^2
		+
		\norm{w}^{p-1}_{H^1}
	}
	\norm{v}_{H^1}
	\lesssim
	\parens*{
		\epsilon \abs{\nabla V \parens{\epsilon \xi}}
		+
		\epsilon^2
	}^\mu
	\norm{v}_{H^1}.
\end{multline*}
By considering the linear isomorphism
$
	H^1 \times \real^3
	\to
	W_{\epsilon, \xi}
	\times
	\Tan_{z_{\epsilon, \xi}}\mathcal{Z}_\epsilon
	\times
	\real^3
$
and Lemma \ref{Lemma:Estimate2ndDer}, we obtain
\[
\norm*{
	\Dif_{\parens{w,\alpha}} \mathcal{H}_{\epsilon, \xi} \parens{v, \beta}
	-
	\begin{pmatrix}
		L_{\epsilon, \xi}\parens{v^\perp}
		\\
		-\sum_{1\leq i\leq 3}\beta_i\dot{z}_{\epsilon, \xi, i}
		\\
		\sum_{1 \leq i \leq 3}
		\angles{v^\parallel \mid \dot{z}_{\epsilon, \xi, i}}_{H^1} e_i
	\end{pmatrix}
}_{H^1 \times \real^3}
\lesssim
\epsilon^\mu \norm{v}_{H^1},
\]
where
$
	\parens{v^\perp, v^\parallel}
	\in
	W_{\epsilon, \xi} \times \Tan_{z_{\epsilon, \xi}}\mathcal{Z}_\epsilon
$
and $v = v^\perp + v^\parallel$. The result then follows from  Lemma \ref{Lemma:BoundNormInverseL}.

\emph{The mapping \eqref{Equation:wMapMultiplicity}, its regularity and estimation of $\norm{w_{\epsilon, \xi}}_{H^1}$.}
In light of the preliminary result and the equality
$\mathcal{H} \parens{0, \cdot, 0, 0} \equiv 0$,
we can use the Implicit Function Theorem to fix an application of class $C^1$,
\[
	\coi{0, \epsilon_0} \times \real^3
	\ni
	\parens{\epsilon, \xi}
	\mapsto
	\parens{w_{\epsilon, \xi}, \alpha_{\epsilon, \xi}}
	\in
	H^1 \times \real^3,
\]
such that
\begin{equation}\label{Equation:IFT}
	\mathcal{H} \parens{
		\epsilon, \xi, w_{\epsilon, \xi}, \alpha_{\epsilon, \xi}
	}
	=
	0
\end{equation}
and $\alpha_{0, \xi} = w_{0, \xi} = 0$ for every
$
	\parens{\epsilon, \xi} \in \coi{0, \epsilon_0} \times \real^3
$.
The estimate on $\norm{w_{\epsilon, \xi}}_{H^1}$ follows from the definition of $\mathcal{O}$.

\emph{Estimation of $\norm{\dot{w}_{\epsilon,\xi,i}}_{H^1}$.} 
By differentiating \eqref{Equation:IFT}, we deduce that
\[
	0
	=
	\partial_i\mathcal{H}_{
		\epsilon, w_{\epsilon, \xi}, \alpha_{\epsilon, \xi}
	}
	\parens{\xi}
	+
	\Dif_{\parens{w_{\epsilon, \xi}, \alpha_{\epsilon, \xi}}}
	\mathcal{H}_{\epsilon, \xi} \parens{
		\dot{w}_{\epsilon, \xi, i}, \dot{\alpha}_{\epsilon, \xi, i}
	}.
\]
It follows from the preliminary result that
\begin{multline*}
	\norm{\dot{w}_{\epsilon, \xi, i}}_{H^1}
	\leq
	\norm*{
		\brackets*{
			\Dif_{\parens{w_{\epsilon, \xi}, \alpha_{\epsilon, \xi}}}
			\mathcal{H}_{\epsilon, \xi}
		}^{-1}
		\parens*{
			\partial_i\mathcal{H}_{
				\epsilon, w_{\epsilon, \xi}, \alpha_{\epsilon, \xi}
			}
			\parens{\xi}
		}
	}_{H^1 \times \real^3}
	\lesssim
	\\
	\lesssim
	\norm*{
		\partial_i\mathcal{H}_{
			\epsilon, w_{\epsilon, \xi}, \alpha_{\epsilon, \xi}
		}
		\parens{\xi}
	}_{H^1 \times \real^3}.
\end{multline*}
Clearly,
\begin{multline*}
	\norm*{
		\partial_i\mathcal{H}_{
			\epsilon, w_{\epsilon, \xi}, \alpha_{\epsilon, \xi}
		}
		\parens{\xi}
	}_{H^1 \times \real^3}
	=
	\\
	=
	\norm*{
		\begin{pmatrix}
			R_\epsilon\parens*{
				\Dif^2_{\parens{
					z_{\epsilon, \xi}+w_{\epsilon, \xi}
				}}
				E_\epsilon \parens{\dot{z}_{\epsilon, \xi, i}, \cdot}
			}
			-
			\sum_{1 \leq j \leq 3}
			\alpha_{\epsilon, \xi, j} \ddot{z}_{\epsilon, \xi, i, j}
			\\
			\sum_{1 \leq j \leq 3}
			\angles{
				w_{\epsilon, \xi} \mid \ddot{z}_{\epsilon, \xi, i, j}
			}_{H^1}
			e_j
		\end{pmatrix}
	}_{H^1 \times \real^3}
	\lesssim
	\\
	\lesssim
	\norm*{
		\Dif^2_{\parens{
			z_{\epsilon, \xi}+w_{\epsilon, \xi}
		}}
		E_\epsilon \parens{\dot{z}_{\epsilon, \xi, i}, \cdot}
	}_{H^{-1}}
	+
	\abs{\alpha_{\epsilon, \xi}}
	+
	\norm{w_{\epsilon, \xi}}_{H^1}.
\end{multline*}
Considering Lemmas \ref{Lemma:ApproxDer} and \ref{Lemma:Estimate2ndDer}, we obtain
\begin{multline*}
\norm*{
	\Dif^2_{\parens{z_{\epsilon, \xi}+w_{\epsilon, \xi}}}
	E_\epsilon \parens{\dot{z}_{\epsilon, \xi, i}, \cdot}
}_{H^{-1}}
\leq
\\
\leq
\norm*{
	\Dif^2_{\parens{z_{\epsilon, \xi} + w_{\epsilon, \xi}}}
	E_\epsilon \parens{
		\dot{z}_{\epsilon, \xi, i}, \cdot
	}
	-
	\Dif^2_{z_{\epsilon, \xi}}
	E_\epsilon \parens{
		\dot{z}_{\epsilon, \xi, i}, \cdot
	}
}
+
\norm*{
	\Dif^2_{z_{\epsilon, \xi}}
	E_\epsilon \parens{
		\dot{z}_{\epsilon, \xi, i}, \cdot
	}
}_{H^{-1}}
\lesssim
\\
\lesssim
\epsilon
+
\norm{w_{\epsilon, \xi}}_{H^1}
+
\norm{w_{\epsilon, \xi}}_{H^1}^2
+
\norm{w_{\epsilon, \xi}}_{H^1}^{p-1}
\lesssim
\brackets*{
	\epsilon \abs{\nabla V \parens{\epsilon \xi} + \epsilon^2}
}^\mu.
\end{multline*}
In view of \eqref{Equation:IFT}, Lemmas \ref{Lemma:PseudoCriticalPoints} and \ref{Lemma:ApproxDer}, we conclude that
$
	\abs{\alpha_{\epsilon, \xi}}
	\lesssim
	\epsilon \abs{\nabla V \parens{\epsilon \xi}} + \epsilon^2
$.

\end{proof}

\subsection{The reduced functional}

Consider the context of Lemma \ref{Lemma:AuxMult}. Given $\epsilon \in \ooi{0, \epsilon_0}$, we define the \emph{reduced functional} $\Phi_\epsilon \colon \real^3 \to \real$ as
\[
	\Phi_\epsilon \parens{\xi}
	=
	E_\epsilon \parens{z_{\epsilon, \xi} + w_{\epsilon,\xi}}.
\]
In particular, $\Phi_\epsilon$ is of class $C^1$ as a composition of mappings of class $C^1$. Let us prove that, up to shrinking $\epsilon_0$, critical points of $\Phi_\epsilon$ yield critical points of $E_\epsilon$.

\begin{lem}\label{Lemma:NaturalConstraintMultiplicity}
Up to shrinking $\epsilon_0$, the following implication holds: if $\epsilon \in \ooi{0, \epsilon_0}$ and $\nabla \Phi_\epsilon \parens{\xi} = 0$, then
$\nabla E_\epsilon \parens{z_{\epsilon, \xi} + w_{\epsilon, \xi}} = 0$.
\end{lem}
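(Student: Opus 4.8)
The plan is to verify that $\tilde{\mathcal{Z}}_\epsilon$ is a natural constraint for $E_\epsilon$ by exploiting the structure of the bifurcation equation. The key observation is that solving the auxiliary equation via Lemma \ref{Lemma:AuxMult} forces $\nabla E_\epsilon(z_{\epsilon,\xi}+w_{\epsilon,\xi})$ to lie in the tangent space $\Tan_{z_{\epsilon,\xi}}\mathcal{Z}_\epsilon = \mathrm{span}\set{\dot{z}_{\epsilon,\xi,1},\dot{z}_{\epsilon,\xi,2},\dot{z}_{\epsilon,\xi,3}}$; indeed, from \eqref{Equation:IFT} we have $\nabla E_\epsilon(z_{\epsilon,\xi}+w_{\epsilon,\xi}) = \sum_{1\leq i\leq 3}\alpha_{\epsilon,\xi,i}\dot{z}_{\epsilon,\xi,i}$ for the coefficients $\alpha_{\epsilon,\xi}$ produced by the Implicit Function Theorem. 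Consequently, it suffices to show that the critical point hypothesis $\nabla\Phi_\epsilon(\xi)=0$ forces all the $\alpha_{\epsilon,\xi,i}$ to vanish.

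First I would compute $\partial_i\Phi_\epsilon(\xi)$ by the chain rule. Since $\Phi_\epsilon(\xi)=E_\epsilon(z_{\epsilon,\xi}+w_{\epsilon,\xi})$ and the map $\xi\mapsto z_{\epsilon,\xi}+w_{\epsilon,\xi}$ is of class $C^1$, we obtain
\[
	\partial_i\Phi_\epsilon(\xi)
	=
	\Dif_{z_{\epsilon,\xi}+w_{\epsilon,\xi}}E_\epsilon\parens*{
		\dot{z}_{\epsilon,\xi,i}+\dot{w}_{\epsilon,\xi,i}
	}
	=
	\angles*{
		\nabla E_\epsilon(z_{\epsilon,\xi}+w_{\epsilon,\xi})
		\mid
		\dot{z}_{\epsilon,\xi,i}+\dot{w}_{\epsilon,\xi,i}
	}_{H^1}.
\]
Substituting $\nabla E_\epsilon(z_{\epsilon,\xi}+w_{\epsilon,\xi})=\sum_{1\leq j\leq 3}\alpha_{\epsilon,\xi,j}\dot{z}_{\epsilon,\xi,j}$ and using that $w_{\epsilon,\xi}\in W_{\epsilon,\xi}$ entails $\angles{w_{\epsilon,\xi}\mid\dot{z}_{\epsilon,\xi,j}}_{H^1}=0$ (so that differentiating this relation in $\xi$ controls $\angles{\dot{w}_{\epsilon,\xi,i}\mid\dot{z}_{\epsilon,\xi,j}}_{H^1}$), I would arrive at a linear system $\nabla\Phi_\epsilon(\xi)=M_{\epsilon,\xi}\,\alpha_{\epsilon,\xi}$, where $M_{\epsilon,\xi}$ is the $3\times 3$ matrix with entries
\[
	\parens{M_{\epsilon,\xi}}_{ij}
	=
	\angles*{
		\dot{z}_{\epsilon,\xi,j}
		\mid
		\dot{z}_{\epsilon,\xi,i}+\dot{w}_{\epsilon,\xi,i}
	}_{H^1}.
\]
Thus $\nabla\Phi_\epsilon(\xi)=0$ gives $M_{\epsilon,\xi}\,\alpha_{\epsilon,\xi}=0$, and the conclusion $\alpha_{\epsilon,\xi}=0$ follows provided $M_{\epsilon,\xi}$ is invertible.

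\textbf{The main obstacle} is establishing this invertibility uniformly in $\xi$ for small $\epsilon$. The strategy is to show $M_{\epsilon,\xi}$ is a small perturbation of a fixed invertible matrix. By Lemma \ref{Lemma:z_eps,xi}, the diagonal entries $\angles{\dot{z}_{\epsilon,\xi,i}\mid\dot{z}_{\epsilon,\xi,i}}_{H^1}$ are bounded away from zero (via \eqref{Equation:z_epsilon,xi:1} and \eqref{Equation:z_epsilon,xi:2}, since $\dot{z}_{\epsilon,\xi,i}\approx-\partial_i z_{\epsilon,\xi}$ up to order $\epsilon$) while the off-diagonal entries $\angles{\dot{z}_{\epsilon,\xi,i}\mid\dot{z}_{\epsilon,\xi,j}}_{H^1}$ for $i\neq j$ are $O(\epsilon)$ by \eqref{Equation:z_epsilon,xi:3}; hence the $\dot{z}$-part of $M_{\epsilon,\xi}$ is diagonally dominant with determinant bounded below. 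The contribution of $\dot{w}_{\epsilon,\xi,i}$ is negligible: by the bound $\norm{\dot{w}_{\epsilon,\xi,i}}_{H^1}\lesssim\brackets*{\epsilon\abs{\nabla V(\epsilon\xi)}+\epsilon^2}^\mu$ from Lemma \ref{Lemma:AuxMult} together with \eqref{Equation:z_epsilon,xi:1}, we get $\abs{\angles{\dot{z}_{\epsilon,\xi,j}\mid\dot{w}_{\epsilon,\xi,i}}_{H^1}}\lesssim\epsilon^\mu$ uniformly in $\xi$ (using $\norm{V}_{C^2}<\infty$ from \ref{V_1} to bound $\abs{\nabla V}$). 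Therefore, up to shrinking $\epsilon_0$, $M_{\epsilon,\xi}$ stays within a fixed distance of an invertible diagonal matrix, so its determinant is bounded away from zero uniformly in $\xi\in\real^3$, and the vanishing of $\alpha_{\epsilon,\xi}$ follows. This yields $\nabla E_\epsilon(z_{\epsilon,\xi}+w_{\epsilon,\xi})=0$, completing the proof.
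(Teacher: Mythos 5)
Your proposal is correct and follows essentially the same route as the paper: represent $\nabla E_\epsilon\parens{z_{\epsilon,\xi}+w_{\epsilon,\xi}}$ as a combination $\sum_i \alpha_{\epsilon,\xi,i}\dot{z}_{\epsilon,\xi,i}$ from the auxiliary equation, compute $\partial_i\Phi_\epsilon$ by the chain rule to get the linear system with matrix $\parens{M_{\epsilon,\xi}}_{ij}=\angles{\dot{z}_{\epsilon,\xi,j}\mid\dot{z}_{\epsilon,\xi,i}+\dot{w}_{\epsilon,\xi,i}}_{H^1}$, and conclude via its non-singularity for small $\epsilon$ using Lemmas \ref{Lemma:z_eps,xi} and \ref{Lemma:AuxMult}. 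Your perturbation-of-a-diagonal-matrix argument actually spells out the invertibility step that the paper merely asserts.
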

\begin{proof}
By construction,
\[
	\nabla E_\epsilon \parens{z_{\epsilon, \xi} + w_{\epsilon, \xi}}
	=
	\sum_{1 \leq i \leq 3}
	c_{\epsilon, \xi, i} \dot{z}_{\epsilon, \xi, i}
	\in \Tan_{z_{\epsilon, \xi} + w_{\epsilon, \xi}} \mathcal{Z}_\epsilon.
\]
In particular,
\[
	\partial_i \Phi_\epsilon \parens{\xi}
	=
	\angles{
		\dot{z}_{\epsilon, \xi, i} + \dot{w}_{\epsilon, \xi, i}
		\mid
		\nabla E_\epsilon
		\parens{z_{\epsilon, \xi} + w_{\epsilon, \xi}}
	}_{H^1}
	=
	\sum_{1 \leq j \leq 3}
	c_{\epsilon, \xi, j} \angles{
		\dot{z}_{\epsilon, \xi, i} + \dot{w}_{\epsilon, \xi, i}
		\mid
		\dot{z}_{\epsilon, \xi, j}
	}_{H^1}
\]
for every $i \in \set{1, 2, 3}$.

In view of the previous paragraph,
\[
	M_{\epsilon, \xi}
	\begin{pmatrix}
		c_{\epsilon, \xi, 1} \\ c_{\epsilon, \xi, 2} \\ c_{\epsilon, \xi, 3}
	\end{pmatrix}
	=
	0,
	~\text{where}~
	M_{\epsilon, \xi} := \parens*{\angles{
		\dot{z}_{\epsilon, \xi, i} + \dot{w}_{\epsilon, \xi, i}
		\mid
		\dot{z}_{\epsilon, \xi, j}
	}_{H^1}}_{1 \leq i, j \leq 3}.
\]
It follows from Lemmas \ref{Lemma:z_eps,xi} and \ref{Lemma:AuxMult} that, up to shrinking $\epsilon_0$, $M_{\epsilon, \xi}$ is non-singular for every $\parens{\epsilon, \xi} \in \ooi{0, \epsilon_0} \times \real^3$. In this situation, the only solution for the previous linear system is
$
	c_{\epsilon, \xi, 1} = c_{\epsilon, \xi, 2} = c_{\epsilon, \xi, 3} = 0
$.
\end{proof}

Let us introduce a decomposition of $\Phi_\epsilon$ inspired by \cite[(32)]{AmbrosettiMalchiodiSecchi2001}.
\begin{lem}\label{Lemma:DecompMult}
We have
\[
	\Phi_\epsilon \parens{\xi}
	=
	C_0 V \parens{\epsilon \xi}^\theta
	+
	\Lambda_\epsilon \parens{\xi}
	+
	\Omega_\epsilon \parens{\xi}
	+
	\Psi_\epsilon \parens{\xi}
\]
for every $\xi\in\real^3$, where
\[
	C_0:=\parens*{\frac{1}{2}-\frac{1}{p+1}}\norm{U}_{L^{p+1}}^{p+1};
	\quad
	\theta:=\frac{p+1}{p-1}-\frac{3}{2};
\]
\[
	\Lambda_\epsilon\parens{\xi}
	:=
	\frac{1}{2}
	\int\cbrackets{
		\brackets{
			V_\epsilon-V_\epsilon\parens{\xi}
		}
		z_{\epsilon,\xi}^2
	}
	+
	\int\cbrackets{
		\brackets{
			V_\epsilon-V_\epsilon\parens{\xi}
		}
		z_{\epsilon,\xi} w_{\epsilon,\xi}
	};
\]
\[
	\Omega_\epsilon \parens{\xi}
	:=
	\frac{\epsilon^3}{4}
	\int\brackets*{
		K_\epsilon
		\phi_{
			\epsilon,
			\parens{z_{\epsilon,\xi}+w_{\epsilon,\xi}}^2
		}
		\parens{z_{\epsilon,\xi}+w_{\epsilon,\xi}}^2
	}
\]
and
\[
\Psi_\epsilon \parens{\xi}
:=
\frac{1}{2} \norm{w_{\epsilon,\xi}}_{H^1_{V_\epsilon}}^2
-
\frac{1}{p + 1}
\int\brackets*{
	\abs{z_{\epsilon,\xi}+w_{\epsilon,\xi}}^{p+1}
	-
	z_{\epsilon,\xi}^{p+1}
	-
	\parens{p+1} z_{\epsilon,\xi}^p w_{\epsilon,\xi}
}.
\]
\end{lem}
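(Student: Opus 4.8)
The plan is to expand $E_\epsilon\parens{z_{\epsilon,\xi} + w_{\epsilon,\xi}}$ term by term and recognize the four summands, the only substantial inputs being the profile equation \eqref{Equation:PDE_z_epsilon,xi} satisfied by $z_{\epsilon,\xi}$ and the scaling behaviour of $U_\lambda$. Throughout I abbreviate $z = z_{\epsilon,\xi}$ and $w = w_{\epsilon,\xi}$ and recall that $V_\epsilon\parens{\xi} = V\parens{\epsilon\xi} = \lambda_{\epsilon\xi}^2$.

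First I would expand the quadratic part as $\frac{1}{2}\norm{z+w}_{H^1_{V_\epsilon}}^2 = \frac{1}{2}\norm{z}_{H^1_{V_\epsilon}}^2 + \angles{z \mid w}_{H^1_{V_\epsilon}} + \frac{1}{2}\norm{w}_{H^1_{V_\epsilon}}^2$, observe that the cubic term of $E_\epsilon$ evaluated at $z+w$ is exactly $\Omega_\epsilon\parens{\xi}$, and rewrite the nonlinear term using the algebraic identity that defines $\Psi_\epsilon$, namely
\[
	-\frac{1}{p+1}\int\abs{z+w}^{p+1}
	=
	\Psi_\epsilon\parens{\xi}
	-
	\frac{1}{2}\norm{w}_{H^1_{V_\epsilon}}^2
	-
	\frac{1}{p+1}\int z^{p+1}
	-
	\int z^p w.
\]
After these manipulations the two $\frac{1}{2}\norm{w}_{H^1_{V_\epsilon}}^2$ terms cancel and one is left with
\[
	E_\epsilon\parens{z+w}
	=
	\frac{1}{2}\norm{z}_{H^1_{V_\epsilon}}^2
	-
	\frac{1}{p+1}\int z^{p+1}
	+
	\angles{z \mid w}_{H^1_{V_\epsilon}}
	-
	\int z^p w
	+
	\Omega_\epsilon\parens{\xi}
	+
	\Psi_\epsilon\parens{\xi},
\]
so it remains to show that the first four terms produce $C_0 V\parens{\epsilon\xi}^\theta + \Lambda_\epsilon\parens{\xi}$.

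The key device is to split the weight as $V_\epsilon = V_\epsilon\parens{\xi} + \brackets{V_\epsilon - V_\epsilon\parens{\xi}}$, which isolates the constant coefficient $\lambda_{\epsilon\xi}^2$ for which \eqref{Equation:PDE_z_epsilon,xi} is available. Concretely, I would write $\frac{1}{2}\norm{z}_{H^1_{V_\epsilon}}^2 = \frac{1}{2}\norm{z}_{H^1_{\lambda_{\epsilon\xi}}}^2 + \frac{1}{2}\int\cbrackets{\brackets{V_\epsilon - V_\epsilon\parens{\xi}} z^2}$ and $\angles{z \mid w}_{H^1_{V_\epsilon}} = \angles{z \mid w}_{H^1_{\lambda_{\epsilon\xi}}} + \int\cbrackets{\brackets{V_\epsilon - V_\epsilon\parens{\xi}} z w}$, the two remainder integrals being precisely the summands of $\Lambda_\epsilon\parens{\xi}$. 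Testing \eqref{Equation:PDE_z_epsilon,xi} against $z$ and against $w$ then yields $\norm{z}_{H^1_{\lambda_{\epsilon\xi}}}^2 = \int z^{p+1}$ and $\angles{z \mid w}_{H^1_{\lambda_{\epsilon\xi}}} = \int z^p w$, so that $\angles{z \mid w}_{H^1_{\lambda_{\epsilon\xi}}} - \int z^p w = 0$ and $\frac{1}{2}\norm{z}_{H^1_{V_\epsilon}}^2 - \frac{1}{p+1}\int z^{p+1} = \parens{\frac{1}{2} - \frac{1}{p+1}}\int z^{p+1} + \frac{1}{2}\int\cbrackets{\brackets{V_\epsilon - V_\epsilon\parens{\xi}} z^2}$.

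Finally I would compute $\int z^{p+1}$ by the change of variables induced by $z_{\epsilon,\xi} = U_{\lambda_{\epsilon\xi}}\parens{\cdot - \xi}$ together with $U_\lambda = \lambda^{2/\parens{p-1}} U\parens{\lambda\cdot}$: a direct substitution gives $\int z_{\epsilon,\xi}^{p+1} = \lambda_{\epsilon\xi}^{2\parens{p+1}/\parens{p-1} - 3}\norm{U}_{L^{p+1}}^{p+1} = V\parens{\epsilon\xi}^\theta\norm{U}_{L^{p+1}}^{p+1}$, since $2\theta = \frac{2\parens{p+1}}{p-1} - 3$ and $\lambda_{\epsilon\xi}^2 = V\parens{\epsilon\xi}$. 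Multiplying by $\frac{1}{2} - \frac{1}{p+1}$ produces exactly $C_0 V\parens{\epsilon\xi}^\theta$, which closes the identification. The computation is entirely routine; the only points demanding slight care are the bookkeeping of the exponent $\theta$ in the scaling step and ensuring that it is the constant-weight inner product $\angles{\cdot \mid \cdot}_{H^1_{\lambda_{\epsilon\xi}}}$ for which the profile equation may legitimately be invoked.
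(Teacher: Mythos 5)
Your proposal is correct and follows essentially the same route as the paper: expand the energy at $z_{\epsilon,\xi}+w_{\epsilon,\xi}$, split the weight as $V_\epsilon = V_\epsilon\parens{\xi} + \brackets{V_\epsilon - V_\epsilon\parens{\xi}}$ to isolate $\Lambda_\epsilon$, use \eqref{Equation:PDE_z_epsilon,xi} tested against $z_{\epsilon,\xi}$ and $w_{\epsilon,\xi}$ to cancel the cross terms and produce $\parens{\tfrac{1}{2}-\tfrac{1}{p+1}}\norm{z_{\epsilon,\xi}}_{L^{p+1}}^{p+1}$, and finish with the scaling identity $\norm{z_{\epsilon,\xi}}_{L^{p+1}}^{p+1} = V\parens{\epsilon\xi}^\theta\norm{U}_{L^{p+1}}^{p+1}$. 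The only difference is cosmetic (you develop the square in the $H^1_{V_\epsilon}$ inner product before splitting the weight, while the paper splits first), so there is nothing to add.
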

\begin{proof}
Clearly,
\begin{multline*}
	\Phi_\epsilon \parens{\xi}
	=
	\frac{1}{2} \norm{
		z_{\epsilon,\xi}
		+
		w_{\epsilon,\xi}
	}_{H^1_{\lambda_{\epsilon \xi}}}^2
	+
	\frac{1}{2} \int \cbrackets*{
		\brackets{V_\epsilon - V \parens{\epsilon \xi}}
		\parens{z_{\epsilon,\xi} + w_{\epsilon,\xi}}^2
	}
	+
	\\
	+
	\frac{\epsilon^3}{4} \int \brackets*{
		K_\epsilon
		\phi_{
			\epsilon,
			\parens{z_{\epsilon,\xi} + w_{\epsilon,\xi}}^2
		}
		\parens{z_{\epsilon,\xi} + w_{\epsilon,\xi}}^2
	}
	-
	\frac{1}{p+1} \norm{
		z_{\epsilon,\xi} + w_{\epsilon,\xi}
	}^{p+1}_{L^{p+1}}.
\end{multline*}
By developing the squares; summing and subtracting
$
	\norm{z_{\epsilon, \xi}}_{L^{p+1}}^{p+1}/\parens{p+1}
$,
we obtain
\begin{multline*}
\Phi_\epsilon \parens{\xi}
=
\frac{1}{2} \norm{z_{\epsilon,\xi}}_{H^1_{\lambda_{\epsilon \xi}}}^2
-
\frac{1}{p+1} \norm{z_{\epsilon, \xi}}_{L^{p+1}}^{p+1}
+
\\
+
\underbrace{
		\frac{1}{2}
		\int\cbrackets{
			\brackets{V_\epsilon-V_\epsilon\parens{\xi}}
			z_{\epsilon,\xi}^2
	}
	+
	\int\cbrackets{
		\brackets{
			V_\epsilon-V_\epsilon\parens{\xi}
		}
		z_{\epsilon,\xi} w_{\epsilon,\xi}
	}
}_{= \Lambda_\epsilon \parens{\xi}}
+
\\
+
\underbrace{
	\frac{\epsilon^3}{4} \int \brackets*{
		K_\epsilon
		\phi_{
			\epsilon,
			\parens{z_{\epsilon,\xi} + w_{\epsilon,\xi}}^2
		}
		\parens{z_{\epsilon,\xi} + w_{\epsilon,\xi}}^2
	}
}_{= \Omega_\epsilon \parens{\xi}}
	+
	\\
	+
	\frac{1}{2}
	\int\cbrackets{
		\brackets{
			V_\epsilon-V_\epsilon\parens{\xi}
		}
		w_{\epsilon,\xi}^2
	}
	+
	\frac{1}{2} \norm{w_{\epsilon,\xi}}_{H^1_{\lambda_{\epsilon \xi}}}^2
	+
	\\
	-
	\frac{1}{p+1} \norm{
		z_{\epsilon,\xi} + w_{\epsilon,\xi}
	}^{p+1}_{L^{p+1}}
	+
	\frac{1}{p+1}
	\norm{z_{\epsilon, \xi}}_{L^{p+1}}^{p+1}
	+
	\angles{
		z_{\epsilon,\xi} \mid w_{\epsilon,\xi}
	}_{H^1_{\lambda_{\epsilon \xi}}}.
\end{multline*}
Due to \eqref{Equation:PDE_z_epsilon,xi},
\[
	\norm{z_{\epsilon,\xi}}_{H^1_{\lambda_{\epsilon \xi}}}^2
	=
	\norm{z_{\epsilon,\xi}}_{L^{p+1}}^{p+1}
	\quad
	\text{and}
	\quad
	\angles{
		z_{\epsilon, \xi}
		\mid
		w_{\epsilon, \xi}
	}_{H^1_{\lambda_{\epsilon \xi}}}
	=
	\int\parens{
		z_{\epsilon, \xi}^p
		w_{\epsilon, \xi}
	}.
\]
The previous equalities imply
\[
	\Phi_\epsilon \parens{\xi}
	=
	\parens*{\frac{1}{2}-\frac{1}{p+1}}
	\norm{z_{\epsilon, \xi}}_{L^{p+1}}^{p+1}
	+
	\Lambda_\epsilon \parens{\xi}
	+
	\Omega_\epsilon \parens{\xi}
	+
	\Psi_\epsilon \parens{\xi}.
\]
In this situation, the result follows from the definition of $z_{\epsilon, \xi}$.
\end{proof}

To finish, we use the previous decomposition to expand $\Phi_\epsilon$ and $\nabla \Phi_\epsilon$ by arguing as in the proof of \cite[Lemma 8.11]{AmbrosettiMalchiodi2006}.

\begin{lem} \label{Lemma:ExpansionMultiplicity}
We have
\[
\abs*{\Phi_\epsilon \parens{\xi} - C_0 V \parens{\epsilon \xi}^\theta}
\lesssim
\epsilon
\quad\text{and}\quad
\abs*{
	\nabla \Phi_\epsilon \parens{\xi}
	-
	\epsilon a \parens{\epsilon \xi} \nabla V \parens{\epsilon \xi}
}
\lesssim
\epsilon^{1+\mu}
\]
for every
$\parens{\epsilon,\xi} \in \ooi{0, \epsilon_0} \times \real^3$,
where
$
a \parens{\epsilon \xi}
:=
\theta C_0 V \parens{\epsilon \xi}^{\theta - 1}
$.
\end{lem}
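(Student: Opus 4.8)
The plan is to differentiate the decomposition from Lemma \ref{Lemma:DecompMult} term by term and to show that every summand other than $C_0 V\parens{\epsilon\xi}^\theta$ is an error of the claimed order. Since $V_\epsilon = V\parens{\epsilon\cdot}$ and $K_\epsilon$ do not depend on $\xi$, the chain rule gives
\[
	\nabla_\xi \brackets{C_0 V\parens{\epsilon\xi}^\theta}
	=
	C_0 \theta V\parens{\epsilon\xi}^{\theta - 1} \epsilon \nabla V\parens{\epsilon\xi}
	=
	\epsilon a\parens{\epsilon\xi} \nabla V\parens{\epsilon\xi},
\]
which is exactly the leading term in the second estimate. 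Hence both assertions reduce to proving that $\abs{\Lambda_\epsilon} + \abs{\Omega_\epsilon} + \abs{\Psi_\epsilon}\lesssim\epsilon$ and $\abs{\nabla\Lambda_\epsilon} + \abs{\nabla\Omega_\epsilon} + \abs{\nabla\Psi_\epsilon}\lesssim\epsilon^{1+\mu}$. Throughout I would freely use that $\norm{w_{\epsilon,\xi}}_{H^1}\lesssim\epsilon$ and $\norm{\dot{w}_{\epsilon,\xi,i}}_{H^1}\lesssim\epsilon^\mu$ (Lemma \ref{Lemma:AuxMult}, together with the boundedness of $\nabla V$ from \ref{V_1}), as well as the bounds on $z_{\epsilon,\xi}$ and its $\xi$-derivatives from Lemma \ref{Lemma:z_eps,xi} and the exponential decay \eqref{Equation:ExponentialDecay}, which after the change of variables $y = x - \xi$ renders weighted integrals such as $\int\abs{y}U_{\lambda_{\epsilon\xi}}\parens{y}^2\dif y$ uniformly finite.

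For the first estimate I would bound the three terms separately. In $\Lambda_\epsilon$, Remark \ref{Remark:TaylorMult} gives $\abs{V_\epsilon - V_\epsilon\parens{\xi}}\lesssim\epsilon\abs{\cdot - \xi}$, so by \eqref{Equation:ExponentialDecay} the first summand is $\lesssim\epsilon$ and the second, which carries an extra factor $w_{\epsilon,\xi}$, is $\lesssim\epsilon\norm{w_{\epsilon,\xi}}_{H^1}\lesssim\epsilon^2$. The term $\Omega_\epsilon$ carries the explicit prefactor $\epsilon^3$, and Lemma \ref{Lemma:FourTerms} bounds the remaining integral by $\norm{z_{\epsilon,\xi} + w_{\epsilon,\xi}}_{H^1}^4\lesssim 1$, so $\abs{\Omega_\epsilon}\lesssim\epsilon^3$. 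Finally, $\Psi_\epsilon$ is at least quadratic in $w_{\epsilon,\xi}$: its first summand is $\lesssim\norm{w_{\epsilon,\xi}}_{H^1}^2$, and Lemma \ref{Lemma:Elementary} applied pointwise to the second-order remainder of $t\mapsto\abs{t}^{p+1}$, together with the Sobolev embeddings, bounds its integral by $\norm{w_{\epsilon,\xi}}_{H^1}^2 + \norm{w_{\epsilon,\xi}}_{H^1}^{p+1}$. Adding up gives $\lesssim\epsilon$.

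The second estimate is obtained the same way after differentiating in $\xi$, now using also the regularity from Lemma \ref{Lemma:AuxMult} to differentiate under the integral sign. Differentiating $\Omega_\epsilon$ keeps the $\epsilon^3$ prefactor and produces factors $\dot{z}_{\epsilon,\xi,i}$ and $\dot{w}_{\epsilon,\xi,i}$, which are $O\parens{1}$ and $O\parens{\epsilon^\mu}$ respectively, so $\abs{\nabla\Omega_\epsilon}\lesssim\epsilon^3$. For $\nabla\Psi_\epsilon$ I would differentiate the quadratic-and-higher structure: each resulting term pairs one factor $\dot{w}_{\epsilon,\xi,i}$ with a factor that is $O\parens{\norm{w_{\epsilon,\xi}}_{H^1}}$ (controlled by the first and third inequalities of Lemma \ref{Lemma:Elementary}), whence $\abs{\nabla\Psi_\epsilon}\lesssim\norm{w_{\epsilon,\xi}}_{H^1}\norm{\dot{w}_{\epsilon,\xi,i}}_{H^1} + \norm{w_{\epsilon,\xi}}_{H^1}^2\lesssim\epsilon^{1+\mu}$.

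The main obstacle is $\nabla\Lambda_\epsilon$, where the $\xi$-dependence enters simultaneously through the explicit $V_\epsilon\parens{\xi} = V\parens{\epsilon\xi}$, through the concentration point of $z_{\epsilon,\xi}$, through the scaling $\lambda_{\epsilon\xi} = V\parens{\epsilon\xi}^{1/2}$ inside $U_{\lambda_{\epsilon\xi}}$, and through $w_{\epsilon,\xi}$. After the change of variables $y = x - \xi$, each of these derivatives either produces an extra $\epsilon$ (from $\partial_{\xi_i}V\parens{\epsilon\xi}$ or $\partial_{\xi_i}\lambda_{\epsilon\xi}$, using \ref{V_1}) multiplying the already $O\parens{\epsilon}$ weight $V_\epsilon - V_\epsilon\parens{\xi}$, hence yields terms of order $\epsilon^2$; the only contribution that is not $O\parens{\epsilon^2}$ is the one in which the derivative falls on $w_{\epsilon,\xi}$, producing $\int\brackets{\parens{V_\epsilon - V_\epsilon\parens{\xi}}z_{\epsilon,\xi}\dot{w}_{\epsilon,\xi,i}}\lesssim\epsilon\norm{\dot{w}_{\epsilon,\xi,i}}_{H^1}\lesssim\epsilon^{1+\mu}$. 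This single term is the bottleneck that fixes the rate $\epsilon^{1+\mu}$ in the statement; since $\mu\leq 1$, all the $\epsilon^2$ and $\epsilon^3$ contributions are absorbed into it, which completes the proof.
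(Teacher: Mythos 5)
Your proposal is correct; your estimate of $\Phi_\epsilon$ itself coincides with the paper's, but your treatment of $\nabla\Phi_\epsilon$ follows a genuinely different route. The paper never differentiates the decomposition of Lemma \ref{Lemma:DecompMult} in $\xi$: it writes $\partial_i\Phi_\epsilon\parens{\xi}=\Dif_{z_{\epsilon,\xi}+w_{\epsilon,\xi}}E_\epsilon\parens{\dot z_{\epsilon,\xi,i}+\dot w_{\epsilon,\xi,i}}$, strips away all $w$-contributions with the abstract estimates of Lemmas \ref{Lemma:PseudoCriticalPoints}, \ref{Lemma:ApproxDer} and \ref{Lemma:Estimate2ndDer}, and then expands the remaining term $\Dif_{z_{\epsilon,\xi}}E_\epsilon\parens{\dot z_{\epsilon,\xi,i}}$ by differentiating the explicit formula for $E_\epsilon\parens{z_{\epsilon,\xi}}$, see \eqref{Equation:Aux2}; there, two terms of size $\epsilon\abs{\nabla V\parens{\epsilon\xi}}$ (one from the derivative hitting the explicit $V\parens{\epsilon\xi}$, one from the translation of $z_{\epsilon,\xi}$) survive individually and are removed only by an exact cancellation, proved via integration by parts and parity. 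You instead differentiate the decomposition term by term and implement that same cancellation implicitly: after the change of variables $y=x-\xi$, the two contributions merge into the single weight $V\parens{\epsilon y+\epsilon\xi}-V\parens{\epsilon\xi}$, whose $\xi_i$-derivative equals $\epsilon\brackets{\partial_iV\parens{\epsilon y+\epsilon\xi}-\partial_iV\parens{\epsilon\xi}}=O\parens{\epsilon^2\abs{y}}$ by \ref{V_1}. Be careful with your phrasing here, though: when the derivative falls on the weight one does not get ``an extra $\epsilon$ multiplying the already $O\parens{\epsilon}$ weight''---the weight is consumed, and the $\epsilon^2$ comes from the difference of the two gradients, i.e., precisely from the $C^2$ bound; this is exactly where the paper's parity cancellation is hiding, and without it that term would only be $O\parens{\epsilon\abs{\nabla V\parens{\epsilon\xi}}}$, which would destroy the claimed rate. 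Granting that, your bounds for $\nabla\Lambda_\epsilon$, $\nabla\Omega_\epsilon$ and $\nabla\Psi_\epsilon$ are all valid and identify the same bottleneck $\epsilon\norm{\dot w_{\epsilon,\xi,i}}_{H^1}\lesssim\epsilon^{1+\mu}$ as the paper. As for what each approach buys: the paper's is modular---it recycles lemmas already proved and avoids justifying differentiation under the integral sign or tracking the $\xi$-dependence through the scaling $\lambda_{\epsilon\xi}$---while yours is more elementary and self-contained, and is in fact closer in spirit to the paper's own proofs of Lemmas \ref{Lemma:ExpansionNonDegenerate} and \ref{Lemma:ExpansionDegenerate} in Section \ref{ConcentrationResults}.
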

\begin{proof}
Consider the decomposition in Lemma \ref{Lemma:DecompMult}.

\emph{Expansion of $\Phi_\epsilon$.}
\emph{-Estimation of $\abs{\Lambda_\epsilon}$.}
In light of Remark \ref{Remark:TaylorMult} and the Cauchy--Schwarz Inequality,
\[
\abs{\Lambda_\epsilon \parens{\xi}}
\lesssim
\epsilon \abs{\nabla V \parens{\epsilon \xi}}
\cbrackets*{
	\int \parens*{\abs{x - \xi} z_{\epsilon, \xi}^2}
	+
	\norm{w_{\epsilon, \xi}}_{H^1}^2
	\brackets*{
		\int \parens*{\abs{x - \xi}^2z_{\epsilon,\xi}^2}
	}^{1/2}
}.
\]
It then follows from \ref{V_1}, \eqref{Equation:ExponentialDecay} and Lemma \ref{Lemma:AuxMult} that
$\abs{\Lambda_\epsilon \parens{\xi}} \lesssim \epsilon$.

\emph{-Estimation of $\abs{\Omega_\epsilon}$.}
Lemmas \ref{Lemma:FourTerms}, \ref{Lemma:z_eps,xi} and \ref{Lemma:AuxMult} imply
$\abs{\Omega_\epsilon \parens{\xi}} \lesssim \epsilon^3$.

\emph{-Estimation of $\abs{\Psi_\epsilon}$.}
Due to Lemma \ref{Lemma:AuxMult},
$\norm{w_{\epsilon, \xi}}_{H^1_{V_\epsilon}}^2 \lesssim \epsilon^2$. In view of Lemma \ref{Lemma:Elementary} and the Sobolev embeddings,
\[
\abs*{
	\int\brackets*{
		\abs{z_{\epsilon,\xi}+w_{\epsilon,\xi}}^{p+1}
		-
		z_{\epsilon,\xi}^{p+1}
		-
		\parens{p+1} z_{\epsilon,\xi}^p w_{\epsilon,\xi}
	}
}
\lesssim
\norm{w_{\epsilon, \xi}}_{H^1}^2
+
\norm{w_{\epsilon, \xi}}_{H^1}^{p + 1}.
\]
By considering Lemma \ref{Lemma:AuxMult} once again, we obtain
\[
\abs*{
	\int\brackets*{
		\abs{z_{\epsilon,\xi}+w_{\epsilon,\xi}}^{p+1}
		-
		z_{\epsilon,\xi}^{p+1}
		-
		\parens{p+1} z_{\epsilon,\xi}^p w_{\epsilon,\xi}
	}
}
\lesssim
\epsilon^2.
\]

\emph{Expansion of $\nabla \Phi_\epsilon$.}
Clearly,
\begin{multline*}
\partial_i \Phi_\epsilon \parens{\xi}
=
\Dif_{z_{\epsilon, \xi}} E_\epsilon \parens{
	\dot{z}_{\epsilon, \xi, i}
}
+
\Dif_{z_{\epsilon, \xi}} E_\epsilon \parens{
	\dot{w}_{\epsilon, \xi, i}	
}
+
\\
+
\brackets*{
	\Dif_{z_{\epsilon, \xi} + w_{\epsilon, \xi}} E_\epsilon 
	\parens{\dot{z}_{\epsilon, \xi, i}}
	-
	\Dif_{z_{\epsilon, \xi}} E_\epsilon
	\parens{\dot{z}_{\epsilon, \xi, i}}
	-
	\Dif^2_{z_{\epsilon, \xi}} E_\epsilon
	\parens{w_{\epsilon,\xi}, \dot{z}_{\epsilon, \xi, i}}
}
+
\\
+
\brackets*{
	\Dif_{z_{\epsilon, \xi} + w_{\epsilon, \xi}} E_\epsilon 
	\parens{\dot{w}_{\epsilon, \xi, i}}
	-
	\Dif_{z_{\epsilon, \xi}} E_\epsilon
	\parens{\dot{w}_{\epsilon, \xi, i}}
	-
	\Dif^2_{z_{\epsilon, \xi}} E_\epsilon
	\parens{w_{\epsilon,\xi}, \dot{w}_{\epsilon, \xi, i}}
}
+
\\
+
\Dif^2_{z_{\epsilon, \xi}} E_\epsilon
\parens{w_{\epsilon,\xi}, \dot{z}_{\epsilon, \xi, i}}
+
\Dif^2_{z_{\epsilon, \xi}} E_\epsilon
\parens{w_{\epsilon,\xi}, \dot{w}_{\epsilon, \xi, i}}.
\end{multline*}
Therefore,
\begin{multline}\label{Equation:Aux}
\abs*{
	\partial_i \Phi_\epsilon \parens{\xi}
	-
	\Dif_{z_{\epsilon, \xi}} E_\epsilon \parens{
		\dot{z}_{\epsilon, \xi, i}
	}
}
\leq
\norm*{\Dif_{z_{\epsilon, \xi}} E_\epsilon}_{H^{-1}}
\norm{\dot{w}_{\epsilon, \xi, i}}_{H^1}
+
\\
+
\norm*{\mathcal{R}_{\epsilon, \xi}}_{H^{-1}}
\parens*{
	\norm{\dot{z}_{\epsilon, \xi, i}}_{H^1}
	+
	\norm{\dot{w}_{\epsilon, \xi, i}}_{H^1}
}
+
\\
+
\norm*{
	\Dif^2_{z_{\epsilon, \xi}} E_\epsilon \parens{
		\dot{z}_{\epsilon, \xi, i}, \cdot
	}
}_{H^{-1}}
\norm{w_{\epsilon,\xi}}_{H^1}
+
\abs*{
	\Dif^2_{z_{\epsilon, \xi}} E_\epsilon
	\parens{w_{\epsilon,\xi}, \dot{w}_{\epsilon, \xi, i}}
},
\end{multline}
where
\[
	\mathcal{R}_{\epsilon, \xi}
	:=
	\Dif_{z_{\epsilon, \xi} + w_{\epsilon, \xi}} E_\epsilon 
	-
	\Dif_{z_{\epsilon, \xi}} E_\epsilon
	-
	\Dif^2_{z_{\epsilon, \xi}} E_\epsilon
	\parens{w_{\epsilon,\xi}, \cdot}
	\in
	H^{-1}.
\]

Let us estimate the terms on the right-hand side of \eqref{Equation:Aux}. Due to Lemmas \ref{Lemma:PseudoCriticalPoints} and \ref{Lemma:AuxMult},
\[
\norm*{\Dif_{z_{\epsilon, \xi}} E_\epsilon}_{H^{-1}}
\norm{\dot{w}_{\epsilon, \xi, i}}_{H^1}
\lesssim
\epsilon^{1 + \mu}.
\]
In view of Lemmas \ref{Lemma:z_eps,xi}, \ref{Lemma:AuxMult} and \ref{Lemma:ApproxDer},
\[
\norm*{
	\mathcal{R}_{\epsilon, \xi}
}_{H^{-1}}
\parens*{
	\norm{\dot{z}_{\epsilon, \xi, i}}_{H^1}
	+
	\norm{\dot{w}_{\epsilon, \xi, i}}_{H^1}
}
\lesssim
\epsilon^{1 + \mu}.
\]
Lemmas \ref{Lemma:AuxMult}, \ref{Lemma:Estimate2ndDer} imply
\[
\norm*{
	\Dif^2_{z_{\epsilon, \xi}} E_\epsilon \parens{
		\dot{z}_{\epsilon, \xi, i}, \cdot
	}
}_{H^{-1}}
\norm{w_{\epsilon,\xi}}_{H^1}
\lesssim
\epsilon^2.
\]
Similar arguments show that
\[
\abs*{
	\Dif^2_{z_{\epsilon, \xi}} E_\epsilon
	\parens{w_{\epsilon,\xi}, \dot{w}_{\epsilon, \xi, i}}
}
\lesssim
\epsilon^{1 + \mu}.
\]

To conclude, it suffices to show that
\[
\abs*{
	\Dif_{z_{\epsilon, \xi}} E_\epsilon \parens{
		\dot{z}_{\epsilon, \xi, i}
	}
	-
	a \parens{\epsilon \xi} \partial_i V \parens{\epsilon \xi}
}
\lesssim
\epsilon^2.
\]
Indeed,
\[
	E_\epsilon \parens{z_{\epsilon, \xi}}
	=
	C_0 V \parens{\epsilon \xi}^\theta
	+
	\frac{1}{2} \int \cbrackets*{
		\brackets{V_\epsilon - V \parens{\epsilon \xi}}
		z_{\epsilon, \xi}^2
	}
	+
	\frac{\epsilon^3}{4} \int \parens*{
		K_\epsilon
		\phi_{\epsilon, z_{\epsilon, \xi}^2}
		z_{\epsilon, \xi}^2
	}.
\]
Differentiating, we obtain
\begin{multline}\label{Equation:Aux2}
	\Dif_{z_{\epsilon, \xi}} E_\epsilon \parens{
		\dot{z}_{\epsilon, \xi, i}
	}
	-
	\epsilon a \parens{\epsilon \xi} \partial_i V \parens{\epsilon \xi}
	=
	\\
	=
	\int
		\brackets{
			V_\epsilon \parens{x}
			-
			V \parens{\epsilon \xi}
			-
			\epsilon \nabla V \parens{\epsilon \xi} \cdot \parens{x - \xi}
		}
		z_{\epsilon, \xi} \parens{x}
		\dot{z}_{\epsilon, \xi, i} \parens{x}
	\dif x
	+
	\\
	+
	\epsilon \int
		\nabla V \parens{\epsilon \xi} \cdot \parens{x - \xi}
		z_{\epsilon, \xi} \parens{x}
		\brackets*{
			\dot{z}_{\epsilon, \xi, i} \parens{x}
			+
			\partial_i z_{\epsilon, \xi} \parens{x}
		}
	\dif x
	+
	\\
	-
	\frac{\epsilon}{2}
	\brackets*{
		2 \int
			\nabla V \parens{\epsilon \xi} \cdot \parens{x - \xi}
			z_{\epsilon, \xi} \parens{x}
			\partial_i z_{\epsilon, \xi} \parens{x}
		\dif x
		+
		\partial_i V \parens{\epsilon \xi}
		\int \parens{z_{\epsilon, \xi}^2}
	}
	+
	\\
	+
	\epsilon^3 \int \parens*{
		K_\epsilon
		\phi_{\epsilon, z_{\epsilon, \xi}^2}
		z_{\epsilon, \xi}
		\dot{z}_{\epsilon, \xi, i}
	}.
\end{multline}
Let us estimate the terms on the right-hand side of \eqref{Equation:Aux2}. It follows from Remark \ref{Remark:TaylorMult} and Lemma \ref{Lemma:z_eps,xi} that the first term may be estimated by $\epsilon^2$. Due to \ref{V_1} and Lemma \ref{Lemma:z_eps,xi}, the second term is estimated by $\epsilon^2$. Consider the third term. An integration by parts shows that
\begin{multline*}
2 \int_0^\infty
	\nabla V \parens{\epsilon \xi} \cdot \parens{x - \xi}
	z_{\epsilon, \xi} \parens{x}
	\partial_i z_{\epsilon, \xi} \parens{x}
\dif x_i
+
\partial_i V \parens{\epsilon \xi}
\int_0^\infty
	z_{\epsilon, \xi} \parens{x}^2
\dif x_i
=
\\
=
\int_0^\infty
	\nabla V \parens{\epsilon \xi} \cdot \parens{x - \xi}
	z_{\epsilon, \xi} \parens{x}^2
\dif x_i
=
0
\end{multline*}
because $x_i \mapsto z_{\epsilon, \xi} \parens{x}$ is even and $x_i \mapsto \nabla V \parens{\epsilon \xi} \cdot \parens{x - \xi}$ is odd. Finally, the last term is estimated by $\epsilon^3$ due to Lemmas \ref{Lemma:FourTerms} and \ref{Lemma:z_eps,xi}.
\end{proof}

\subsection{Proof of Theorem \ref{Theorem:MultiplicitySolutions}}

We need two preliminary results for the proof. The first result is that concentration necessarily occurs around critical points of $V$.
\begin{prop}\label{Proposition:Concentration}
Suppose that $x_0 \in \real^3$, $\epsilon_0 \in \ooi{0, 1}$ and
\[
	\set*{
		u_\epsilon \in H^1:
		\nabla E_\epsilon \parens{u_\epsilon} = 0;
		\epsilon \in \ooi{0,\epsilon_0}
	}
\]
are such that
\begin{equation}\label{Equation:ConcentrationOccurs}
	\norm*{
		u_\epsilon - U_{\lambda_{x_0}} \parens{\cdot-x_0}
	}_{H^1}\to 0
	~\text{as}~
	\epsilon \to 0^+.
\end{equation}
We conclude that $\nabla V\parens{x_0}=0$.
\end{prop}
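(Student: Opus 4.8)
The plan is to exploit the only source of broken translation invariance in \eqref{Equation:WeakSolution}, namely the potential $V_\epsilon$, by testing the Euler--Lagrange equation against the spatial derivative $\partial_j u_\epsilon$ and extracting a \emph{momentum identity}. First I would record that each $u_\epsilon$ is a classical solution: bootstrapping the equation $-\Delta u_\epsilon + V_\epsilon u_\epsilon + \epsilon^3 K_\epsilon \phi_{\epsilon, u_\epsilon^2} u_\epsilon = u_\epsilon \abs{u_\epsilon}^{p-1}$ (whose right-hand side lies in $L^{6/p}$ by the Sobolev embedding $H^1 \hookrightarrow L^6$) through elliptic regularity yields $u_\epsilon \in H^2 \cap C^2$, while the comparison principle together with $V_\epsilon \geq \inf V > 0$ forces exponential decay of $u_\epsilon$ and $\nabla u_\epsilon$. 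This decay is exactly what makes the integrations by parts below legitimate and kills every boundary term at infinity.

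Testing \eqref{Equation:WeakSolution} with $w = \partial_j u_\epsilon$ for a fixed $j \in \set{1,2,3}$, I would observe that the kinetic term and the power nonlinearity are exact divergences: $\int \nabla u_\epsilon \cdot \nabla \partial_j u_\epsilon = \frac{1}{2}\int \partial_j \abs{\nabla u_\epsilon}^2 = 0$ and $\int u_\epsilon \abs{u_\epsilon}^{p-1} \partial_j u_\epsilon = \frac{1}{p+1}\int \partial_j \abs{u_\epsilon}^{p+1} = 0$. Writing the surviving potential term as $\int V_\epsilon u_\epsilon \partial_j u_\epsilon = \frac{1}{2}\int V_\epsilon \partial_j \parens{u_\epsilon^2} = -\frac{\epsilon}{2}\int \partial_j V \parens{\epsilon x} u_\epsilon \parens{x}^2 \dif x$, the critical point equation collapses (after cancelling one factor of $\epsilon$) to
\[
	\int \partial_j V \parens{\epsilon x} u_\epsilon \parens{x}^2 \dif x
	=
	2\epsilon^2 \int K_\epsilon \phi_{\epsilon, u_\epsilon^2} u_\epsilon \partial_j u_\epsilon.
\]
The right-hand side is genuinely lower order: Hölder's inequality and the embeddings $\mathcal{X}, H^1 \hookrightarrow L^6$ give $\abs*{\int K_\epsilon \phi_{\epsilon, u_\epsilon^2} u_\epsilon \partial_j u_\epsilon} \lesssim \norm{\phi_{\epsilon,u_\epsilon^2}}_{L^6} \norm{u_\epsilon}_{L^3} \norm{\partial_j u_\epsilon}_{L^2} \lesssim \norm{u_\epsilon}_{H^1}^4$ (alternatively one may invoke Lemma \ref{Lemma:FourTerms}), and since $u_\epsilon \to U_{\lambda_{x_0}}\parens{\cdot - x_0}$ in $H^1$ the quantity $\norm{u_\epsilon}_{H^1}$ is bounded, so the whole right-hand side is $O\parens{\epsilon^2}$.

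It then remains to pass to the limit on the left-hand side. Using \ref{V_1} (so that $\nabla V$ is bounded and continuous), the exponential decay \eqref{Equation:ExponentialDecay} of the concentration profile, and the $L^2$-convergence of $u_\epsilon$, dominated convergence yields $\int \partial_j V \parens{\epsilon x} u_\epsilon \parens{x}^2 \dif x \to \partial_j V \parens{x_0} \int U_{\lambda_{x_0}}^2$, the argument of $\nabla V$ tending to $x_0$ precisely because the mass of $u_\epsilon^2$ concentrates where $V_\epsilon \to V \parens{x_0}$. Since $\int U_{\lambda_{x_0}}^2 > 0$ and the right-hand side vanishes in the limit, I conclude $\partial_j V \parens{x_0} = 0$ for every $j$, i.e.\ $\nabla V \parens{x_0} = 0$. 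I expect the main obstacle to lie at the two ends of the argument: rigorously justifying the momentum identity (regularity and decay estimates sharp enough to discard the boundary terms at infinity and to make $\partial_j u_\epsilon$ an admissible test direction) and, in the limit, carefully matching the translation appearing in the concentration hypothesis with the rescaling inside $V_\epsilon$, so that $\nabla V$ is evaluated at $x_0$ and not at the origin.
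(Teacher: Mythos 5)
Your overall strategy is the correct one, and it coincides with the paper's: the paper gives no self-contained proof of this proposition, deferring instead to \cite[Theorem 5.1]{IanniVaira2008}, whose proof is precisely the momentum identity you set up — test \eqref{Equation:WeakSolution} with $w=\partial_j u_\epsilon$, note that the kinetic and power terms are exact divergences, and treat the Bopp--Podolsky term as a remainder. Your derivation of the identity
\[
	\int \partial_j V\parens{\epsilon x}\, u_\epsilon\parens{x}^2 \dif x
	=
	2\epsilon^2 \int K_\epsilon \phi_{\epsilon, u_\epsilon^2} u_\epsilon \partial_j u_\epsilon
\]
is correct (the sign and the cancellation of one factor of $\epsilon$ check out), and both of your bounds for the right-hand side — the direct H\"older estimate or Lemma \ref{Lemma:FourTerms} combined with the regularity you establish — legitimately give $O\parens{\epsilon^2}$.

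The step that does not hold up as written is the passage to the limit. Under \eqref{Equation:ConcentrationOccurs} taken literally, the mass of $u_\epsilon^2$ remains in a fixed neighborhood of $x_0$, and on that region the coefficient is $\partial_j V\parens{\epsilon x}$ with $\epsilon x \to 0$; splitting $u_\epsilon^2 = U_{\lambda_{x_0}}\parens{\cdot-x_0}^2 + \parens{u_\epsilon^2 - U_{\lambda_{x_0}}\parens{\cdot-x_0}^2}$ and applying dominated convergence to the first piece yields
\[
	\int \partial_j V\parens{\epsilon x}\, u_\epsilon\parens{x}^2 \dif x
	\to
	\partial_j V\parens{0}\int U_{\lambda_{x_0}}^2,
\]
so your argument proves $\nabla V\parens{0}=0$, not $\nabla V\parens{x_0}=0$; your assertion that the mass of $u_\epsilon^2$ sits ``where $V_\epsilon \to V\parens{x_0}$'' is false for the hypothesis as printed, since there $V_\epsilon \to V\parens{0}$ near the fixed point $x_0$. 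The discrepancy originates in a coordinate inconsistency in the statement itself: the $u_\epsilon$ are critical points of $E_\epsilon$, hence live in the rescaled frame, and the hypothesis consistent with \eqref{Equation:Concentration} and with how the proposition is invoked in the proof of Theorem \ref{Theorem:MultiplicitySolutions} (there $u_\epsilon = z_{\epsilon,\xi_\epsilon}+w_{\epsilon,\xi_\epsilon}$ concentrates at $\xi_\epsilon$ with $\epsilon\xi_\epsilon \to x_0$) is $\norm*{u_\epsilon - U_{\lambda_{x_0}}\parens{\cdot - x_0/\epsilon}}_{H^1}\to 0$. With that reading, the change of variables $y = x - x_0/\epsilon$ turns the coefficient into $\partial_j V\parens{\epsilon y + x_0}$, and your dominated convergence argument then delivers $\nabla V\parens{x_0}=0$ verbatim. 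In other words, the ``translation matching'' you flagged at the end as the main obstacle is not a technicality to verify but the genuinely missing step; once the hypothesis is interpreted in the correct frame, your proof is complete.
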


In fact, the previous proposition is analogous to \cite[Theorem 5.1]{IanniVaira2008} and may be proved accordingly. The second preliminary result is \cite[Theorem 1]{AmbrosettiMalchiodiSecchi2001}, which reads as follows.
\begin{thm}\label{Theorem:Temp}
Let $f \in C^2 \parens{\real^3}$, $\mathcal{M}$ be a compact non-degenerate critical manifold of $f$, $\mathcal{N}$ be a neighborhood of $\mathcal{M}$ and $g \in C^1 \parens{\mathcal{N}}$. We conclude that if $\norm{f-g}_{C^1}$ is sufficiently small, then $g$ has at least $\cupl\parens{\mathcal{M}}+1$ critical points in $\mathcal{N}$.
\end{thm}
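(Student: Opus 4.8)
The plan is to carry out a finite-dimensional Lyapunov--Schmidt reduction in a tubular neighborhood of $\mathcal{M}$, producing a reduced functional on the compact manifold $\mathcal{M}$ whose critical points yield critical points of $g$, and then to bound the number of these from below by the cup-length. Since $\mathcal{M}$ is a non-degenerate critical manifold of $f$, at each $p \in \mathcal{M}$ one has the orthogonal splitting $\real^3 = \Tan_p \mathcal{M} \oplus N_p$ with $N_p := \parens{\Tan_p \mathcal{M}}^\perp$, and non-degeneracy means precisely that the symmetric bilinear form $\Dif_p^2 f$ is invertible on $N_p$; writing $L_p \colon N_p \to N_p$ for the associated self-adjoint isomorphism and using the compactness of $\mathcal{M}$, I would first record that $\sup_{p \in \mathcal{M}} \norm{L_p^{-1}} < \infty$, so that the normal Hessian of $f$ is uniformly invertible along $\mathcal{M}$.

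First I would solve the auxiliary (normal) equation. Letting $P_p \colon \real^3 \to N_p$ denote the orthogonal projection, I would seek for each $p$ a small $w \in N_p$ with $P_p \parens{\nabla g \parens{p + w}} = 0$. Since $\nabla f \parens{p} = 0$, one has $P_p \nabla f \parens{p + w} = L_p w + o \parens{\abs{w}}$, so the equation reads $L_p w = -P_p \brackets{\parens{\nabla g - \nabla f} \parens{p + w}} + o \parens{\abs{w}}$; as $\norm{\nabla g - \nabla f}_{C^0} \le \norm{f - g}_{C^1}$ is small, the right-hand side is a small perturbation inverted by $L_p^{-1}$, and a fixed-point argument on a small ball of $N_p$ yields a solution $w \parens{p}$ with $\sup_{p} \abs{w \parens{p}} \to 0$ as $\norm{f - g}_{C^1} \to 0$. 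I would then set $\Phi \parens{p} := g \parens{p + w \parens{p}}$ and verify the natural-constraint property: because $P_p \parens{\nabla g \parens{p + w \parens{p}}} = 0$, the vector $\nabla g \parens{p + w \parens{p}}$ is tangent to $\mathcal{M}$, and a first-order expansion shows $\nabla_{\mathcal{M}} \Phi \parens{p}$ equals its tangential part; hence $\nabla_{\mathcal{M}} \Phi \parens{p} = 0$ forces both components of $\nabla g \parens{p + w \parens{p}}$ to vanish, so $p + w \parens{p}$ is a genuine critical point of $g$.

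Granting that $\Phi$ is of class $C^1$ on $\mathcal{M}$, the remaining step is purely topological: $\mathcal{M}$ is compact, so by the cup-length estimate of Lusternik--Schnirelmann theory $\Phi$ has at least $\cupl \parens{\mathcal{M}} + 1$ critical points on $\mathcal{M}$. Pulling these back through the map $p \mapsto p + w \parens{p}$, which is injective once $\sup_p \abs{w \parens{p}}$ is small, produces at least $\cupl \parens{\mathcal{M}} + 1$ distinct critical points of $g$; all of them lie in $\mathcal{N}$ provided $\norm{f - g}_{C^1}$ is small enough that the tube $\set{p + w \parens{p} : p \in \mathcal{M}}$ stays inside $\mathcal{N}$. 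Moreover, the confinement of every critical point of $g$ near $\mathcal{M}$ to this tube follows because $\abs{\nabla f}$ is bounded below in the normal direction off $\mathcal{M}$, hence so is $\abs{\nabla g}$ for small $\norm{f - g}_{C^1}$.

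The main obstacle is the step I glossed over: producing a reduced functional that is genuinely of class $C^1$ from the merely $C^1$ datum $g$. The difficulty is real --- when $g$ is only $C^1$ the normal adjustment $w \parens{p}$ need not be unique (the perturbed functional may even possess a normal continuum of critical points), so a naive implicit-function construction of a single-valued $C^1$ map $w$ can break down. This is exactly where the stronger hypothesis $f \in C^2$ is indispensable: the uniform invertibility of $L_p$ lets one replace the naive reduction by the min--max (saddle) reduction along the splitting $N_p = N_p^- \oplus N_p^+$ into the negative and positive eigenspaces of $L_p$, for which the reduced value $\Phi \parens{p}$ is well defined and inherits $C^1$ regularity from $g$ regardless of uniqueness of the optimizer. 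I would therefore concentrate the technical effort on this robust reduction and on the verification that its critical points still coincide with those of $g$, after which the cup-length count applies verbatim.
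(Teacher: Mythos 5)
First, a point of comparison: the paper does not prove this statement at all — it is quoted verbatim as \cite[Theorem 1]{AmbrosettiMalchiodiSecchi2001} — so the only proof to measure your attempt against is the original one, which is not a reduction argument but a Conley-index argument: $\mathcal{M}$ is an isolated invariant set of the negative (pseudo-)gradient flow of $f$; a compact isolating neighborhood persists when the vector field is perturbed in the $C^0$ sense, which is exactly what smallness of $\norm{f-g}_{C^1}$ provides; the index is the Thom space of the negative normal bundle of $\mathcal{M}$; and a cup-length estimate for the number of rest points in an isolated invariant set yields at least $\cupl\parens{\mathcal{M}}+1$ critical points of $g$ in $\mathcal{N}$. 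That machinery is chosen precisely because $g$ is merely $C^1$, i.e., to avoid any single-valued normal reduction.

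Your attempt has a genuine gap at exactly the point you flag, and the repair you propose does not close it. Existence of a normal solution $w$ of $P_p\parens{\nabla g\parens{p+w}}=0$ is fine (by Brouwer — it cannot be a contraction argument, since $\nabla g-\nabla f$ is only continuous in $w$), but uniqueness, continuity and differentiability of $p\mapsto w\parens{p}$ all fail in general for $C^1$ data, and you use them twice: in the natural-constraint step, where the chain rule $\nabla_{\mathcal{M}}\Phi\parens{p}=\parens{\Id+\Dif_p w}^{*}\nabla g\parens{p+w\parens{p}}$ requires $\Dif_p w$ to exist, and in applying smooth Lusternik--Schnirelmann theory to $\Phi$. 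The min--max (saddle) reduction does not rescue this: Danskin-type theorems give differentiability of a marginal value only when the optimizer is unique, and uniqueness would require strict convexity/concavity of $w\mapsto g\parens{p+v+w}$ on the eigenspaces $N_p^{\pm}$ — a second-order property of $g$ that $C^1$-closeness to the $C^2$ function $f$ cannot supply, since $g$ need not be twice differentiable anywhere. Without uniqueness the reduced $\Phi$ is in general only Lipschitz, so one would need nonsmooth critical point theory, and even then a Clarke-critical point of $\Phi$ does not obviously produce a critical point of $g$. As written, your scheme proves the theorem only under stronger hypotheses (for instance smallness of $\norm{f-g}_{C^2}$, or $g\in C^{1,1}$ with control on the differential of $\nabla g$, for which the implicit-function reduction is legitimate); for the stated $C^1$ hypothesis the topological route above is the one that works.
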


We proceed to the proof, based on the arguments in \cite[Proof of Theorem 8.5]{AmbrosettiMalchiodi2006}.

\begin{proof}[Proof of Theorem \ref{Theorem:MultiplicitySolutions}]
\emph{Multiplicity of solutions.} It suffices to apply Theorem \ref{Theorem:Temp}. Indeed, let $f = C_0 V_\epsilon^\theta$; $\mathcal{M} = M$ and $\mathcal{N}$ be a bounded neighborhood of $\mathcal{M}$ in $\real^3$. It follows from Lemma \ref{Lemma:ExpansionMultiplicity} that $\norm{f - \Phi_\epsilon}_{C^1} \to 0$ as $\epsilon \to 0$, hence the result.

\emph{Concentration around points of $M$.}
Up to shrinking $\mathcal{N}$, we can suppose that if $x \in \overline{\mathcal{N}}$ and $\nabla V\parens{x}=0$, then $x \in M$. Due to the previous result, we can fix $\epsilon_0 \in \ooi{0, 1}$ such that given $\epsilon \in \ooi{0, \epsilon_0}$,
$u_\epsilon:=z_{\epsilon, \xi_\epsilon}+w_{\epsilon_n, \xi_\epsilon}$
satisfies $\nabla E_{\epsilon} \parens{u_\epsilon} = 0$. Suppose that $x_0 \in \overline{\mathcal{N}}$ is an accumulation point of $\set{\xi_\epsilon}_{\epsilon \in \ooi{0, \epsilon_0}}$. We deduce from the estimate on Lemma \ref{Lemma:AuxMult} that \eqref{Equation:ConcentrationOccurs} holds up to subsequence. It follows from Proposition \ref{Proposition:Concentration} that $x_0 \in M$, hence the result.
\end{proof}

\section{Concentration results}\label{ConcentrationResults}

Our present goal is to prove Theorems \ref{Theorem:SolutionsNonDegenerate} and  \ref{Theorem:SolutionsDegenerate}, so let us suppose throughout this section that $x_0$ is a critical point of $V$. To simplify the notation, we suppose further that $V\parens{x_0}=1$ and $x_0=0$.

\subsection{Lyapunov--Schmidt reduction}\label{Section:LyapunovConcentration}
As in Section \ref{PseudoMult}, we begin with a Taylor expansion.
\begin{rmk} \label{Remark:Taylor}
Due to \ref{V_1},
\[
	\abs*{
		V_\epsilon \parens{x}
		-
		1
		-
		\frac{\epsilon^2}{2} \Dif^2_0 V \parens{x, x}
	}
	\lesssim
	\epsilon^3 \abs{x}^3
\]
if the hypotheses of Theorem \ref{Theorem:SolutionsNonDegenerate} are satisfied; due to \ref{V_3},
\[
	\abs*{
		V_\epsilon \parens{x}
		-
		1
		-
		\frac{\epsilon^n}{n!} \Dif^n_0 V \parens{x, \ldots, x}
	}
	\lesssim
	\epsilon^{n+1} \abs{x}^{n+1}
\]
if the hypotheses of Theorem \ref{Theorem:SolutionsDegenerate} are satisfied and $n < \infty$, where the estimates hold for every $\parens{\epsilon, x} \in \ooi{0, 1} \times \real^3$.
\end{rmk}

The following estimate is analogous to \cite[Lemma 3.1]{IanniVaira2008} and may be proved accordingly.
\begin{lem}\label{Lemma:EstimateForK}
Suppose that $s$ is a positive integer and $\Dif^i_0K=0$ for every $i\in\set{0,\ldots,s-1}$. Given $\eta \in \ooi{0, \infty}$, we conclude that
\[
	\int\abs{
		K_\epsilon \phi_{\epsilon, u z_\xi} w z_\xi
	}
	\lesssim
	\epsilon^{2s}\norm{u}_{H^1}\norm{w}_{H^1}
\]
for every $u,w\in H^1$ and $\xi\in B_\eta$.
\end{lem}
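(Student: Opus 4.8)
The plan is to write the left-hand side as an explicit double integral and then reduce everything to a single uniform pointwise bound on the weight $\abs{K_\epsilon} z_\xi$, from which the factor $\epsilon^{2s}$ will be extracted. Since $z_\xi \geq 0$ and $\kappa \geq 0$, one has
\[
	\int\abs{K_\epsilon \phi_{\epsilon, u z_\xi} w z_\xi}
	\leq
	\int\int
		\abs{K_\epsilon\parens{x}} z_\xi\parens{x} \abs{w\parens{x}}
		\kappa_\epsilon\parens{x - y}
		\abs{K_\epsilon\parens{y}} z_\xi\parens{y} \abs{u\parens{y}}
	\dif y \dif x,
\]
so that the two copies of $K_\epsilon$ — the explicit one and the one hidden in $\phi_{\epsilon, u z_\xi} = \parens{K_\epsilon u z_\xi} \ast \kappa_\epsilon$ — pair respectively with $z_\xi\parens{x}$ and $z_\xi\parens{y}$. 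It therefore suffices to prove the uniform pointwise estimate
\[
	\abs{K_\epsilon\parens{x}} z_\xi\parens{x}
	\lesssim
	\epsilon^s h_\xi\parens{x},
	\quad\text{where}\quad
	h_\xi\parens{x} := \parens{1 + \abs{x - \xi}^s} z_\xi\parens{x},
\]
for every $x \in \real^3$, uniformly for $\xi \in B_\eta$ and sufficiently small $\epsilon$. Here $h_\xi$ is a translate of the exponentially decaying function $\parens{1 + \abs{\cdot}^s} U$, so $\norm{h_\xi}_{L^2} \lesssim 1$ uniformly in $\xi$.

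Establishing this pointwise bound is the crux of the argument, and I would carry it out by splitting $\real^3$ according to the size of $\abs{\epsilon x}$. Fix $\rho \in \ooi{0, \infty}$ small enough that Taylor's theorem applies on $B_\rho$; using the regularity of $K$ and the vanishing of its derivatives up to order $s - 1$ at the origin, it gives $\abs{K\parens{z}} \lesssim \abs{z}^s$ for $z \in B_\rho$. On the near region $\abs{\epsilon x} \leq \rho$ this yields $\abs{K_\epsilon\parens{x}} \lesssim \epsilon^s \abs{x}^s \lesssim \epsilon^s\parens{1 + \abs{x - \xi}^s}$ for $\xi \in B_\eta$, whence $\abs{K_\epsilon\parens{x}} z_\xi\parens{x} \lesssim \epsilon^s h_\xi\parens{x}$. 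On the far region $\abs{\epsilon x} > \rho$ one instead has $\abs{x - \xi} \geq \rho/\epsilon - \eta \gtrsim 1/\epsilon$ for small $\epsilon$, so that $\epsilon^s\parens{1 + \abs{x - \xi}^s} \gtrsim 1$; combining this with the boundedness of $K$ from \ref{K_1} gives $\abs{K_\epsilon\parens{x}} z_\xi\parens{x} \lesssim z_\xi\parens{x} \lesssim \epsilon^s h_\xi\parens{x}$. The point is that the smallness of $K$ near the origin suffices wherever $z_\xi$ is not yet negligible, while the exponential decay \eqref{Equation:ExponentialDecay} of $z_\xi$ dominates any power of $\epsilon$ once $\abs{\epsilon x}$ is bounded away from $0$; reconciling these two mechanisms on the overlap is the only delicate step.

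With the pointwise estimate in hand, I would substitute it for both the $x$ and the $y$ variables to factor out $\epsilon^{2s}$,
\[
	\int\abs{K_\epsilon \phi_{\epsilon, u z_\xi} w z_\xi}
	\lesssim
	\epsilon^{2s}
	\int\int
		h_\xi\parens{x} \abs{w\parens{x}}
		\kappa_\epsilon\parens{x - y}
		h_\xi\parens{y} \abs{u\parens{y}}
	\dif y \dif x,
\]
and then exploit that $\kappa\parens{t} = \abs{t}^{-1}\brackets{1 - \exp\parens{-\abs{t}}} \leq 1$, so $\kappa_\epsilon \leq 1$ and the double integral decouples. Two applications of the Cauchy--Schwarz inequality then give
\[
	\int\int
		h_\xi\parens{x} \abs{w\parens{x}}
		\kappa_\epsilon\parens{x - y}
		h_\xi\parens{y} \abs{u\parens{y}}
	\dif y \dif x
	\leq
	\norm{h_\xi}_{L^2}^2 \norm{w}_{L^2} \norm{u}_{L^2}
	\lesssim
	\norm{u}_{H^1}\norm{w}_{H^1},
\]
which yields the claim. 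I expect this final step to be routine: the boundedness of $\kappa$, in contrast with the Coulomb kernel $\abs{\cdot}^{-1}$ appearing in the analogous \cite[Lemma 3.1]{IanniVaira2008}, is precisely what lets one dispense with the Hardy--Littlewood--Sobolev inequality and close the estimate by an elementary decoupling.
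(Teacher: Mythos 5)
Your proof is correct, but note what the paper itself does here: it gives no proof at all, saying only that the lemma ``is analogous to \cite[Lemma 3.1]{IanniVaira2008} and may be proved accordingly.'' Your argument is essentially that adaptation, written out. The core --- the near/far splitting in $\abs{\epsilon x}$, the Taylor bound $\abs{K\parens{z}}\lesssim\abs{z}^s$ near the origin, and the resulting pointwise estimate $\abs{K_\epsilon}z_\xi\lesssim\epsilon^s\parens{1+\abs{\cdot-\xi}^s}z_\xi$ uniformly for $\xi\in B_\eta$ --- is exactly the Ianni--Vaira mechanism. Where you genuinely diverge is in closing the double integral: with the Coulomb kernel $\abs{\cdot}^{-1}$, Ianni--Vaira must invoke the Hardy--Littlewood--Sobolev inequality, whereas you exploit $\kappa\leq 1$ to decouple the two integrations and finish with two applications of Cauchy--Schwarz. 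That simplification is valid and is precisely the payoff of the bounded Bopp--Podolsky kernel (the same structural fact underlying Lemma \ref{Lemma:FourTerms}); it is also why you get the clean exponent $\epsilon^{2s}$ rather than losing a power to the rescaling of the kernel. Three small points to tidy up. First, the Taylor step requires $K$ to be of class $C^s$ with bounded derivatives near $0$; this does not follow from \ref{K_1} together with the stated vanishing of $\Dif_0^iK$, but it is supplied by \ref{K_2} in the only place the lemma is applied (and is likewise assumed in \cite{IanniVaira2008}), so it should be flagged as an implicit hypothesis. Second, your pointwise bound holds only for $\epsilon$ below a threshold depending on $\rho$ and $\eta$; for $\epsilon$ bounded away from $0$ the claimed inequality is trivial, since Lemma \ref{Lemma:FourTerms} bounds the left-hand side by a constant times $\norm{u}_{H^1}\norm{w}_{H^1}$, which is then $\lesssim\epsilon^{2s}\norm{u}_{H^1}\norm{w}_{H^1}$ on that range --- worth one line to cover all $\epsilon\in\ooi{0,1}$. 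Third, the step you call delicate is not, in your own setup: the far-region bound is purely algebraic ($\epsilon^s\abs{x-\xi}^s\gtrsim\rho^s$ there), and the exponential decay \eqref{Equation:ExponentialDecay} enters only to guarantee $\norm{h_\xi}_{L^2}\lesssim 1$.
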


As Problem \eqref{Equation:SimpleProblem} is invariant by translation, we deduce that
\[
	\mathcal{Z}
	:=
	\set*{
		z_\xi:=U\parens{\cdot-\xi}: \xi\in\real^3
	}
\]
is a non-compact critical manifold of $\overline{I}_1$. Furthermore, \cite[Lemma 4.1]{AmbrosettiMalchiodi2006} shows that $\mathcal{Z}$ is non-degenerate. The next result shows that we can bound the derivative of $E_\epsilon$ at points in $\mathcal{Z}$ similarly as in Lemma \ref{Lemma:PseudoCriticalPoints}.
\begin{lem}\label{Lemma:PseudoCriticalConcentration}
Given $\eta\in\ooi{0,\infty}$, there exists $\epsilon_\eta\in\ooi{0,1}$ such that
\[
	\norm{\nabla E_\epsilon\parens{z_\xi}}_{H^1}
	\lesssim
	\begin{cases}
	\epsilon^2
	&\text{if the hypotheses of Theorem \ref{Theorem:SolutionsNonDegenerate} are satisfied};
	\\
	\epsilon^\gamma
	&\text{if the hypotheses of Theorem \ref{Theorem:SolutionsDegenerate} are satisfied}
	\end{cases}
\]
for every
$
	\parens{\epsilon,\xi}
	\in
	\ooi{0,\epsilon_\eta}
	\times
	B_\eta
$.
\end{lem}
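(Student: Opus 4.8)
The plan is to exploit the fact that each $z_\xi = U\parens{\cdot - \xi}$ is an \emph{exact} solution of $(P_1)$ — equivalently, that $\mathcal{Z}$ is a critical manifold of $\overline{I}_1$ — so that the genuinely nonlinear part of $\nabla E_\epsilon\parens{z_\xi}$ cancels and only two controllable error terms remain, one measuring the flatness of $V$ at the origin and one measuring the flatness of $K$. First I would write $\Dif_{z_\xi} E_\epsilon\parens{w}$ out via Lemma \ref{Lemma:EnergyClassC2}; since $V\parens{x_0}=1$, $z_\xi>0$ and $\Dif_{z_\xi}\overline{I}_1\parens{w}=\angles{z_\xi \mid w}_{H^1}-\int z_\xi^p w=0$, subtracting this vanishing quantity leaves
\[
	\Dif_{z_\xi} E_\epsilon \parens{w}
	=
	\int \cbrackets*{\parens{V_\epsilon - 1} z_\xi w}
	+
	\epsilon^3 \int \parens*{K_\epsilon \phi_{\epsilon, z_\xi^2} z_\xi w}.
\]
It then suffices to bound each term by the asserted power of $\epsilon$ times $\norm{w}_{H^1}$, uniformly for $\xi \in B_\eta$, and take the supremum over $\norm{w}_{H^1} \leq 1$.

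For the potential term I would combine Remark \ref{Remark:Taylor} with the exponential decay \eqref{Equation:ExponentialDecay}. Under the hypotheses of Theorem \ref{Theorem:SolutionsNonDegenerate}, Remark \ref{Remark:Taylor} and \ref{V_1} yield $\abs{V_\epsilon\parens{x}-1} \lesssim \epsilon^2 \parens{\abs{x}^2 + \abs{x}^3}$; under those of Theorem \ref{Theorem:SolutionsDegenerate} with $n < \infty$ they yield $\abs{V_\epsilon\parens{x}-1} \lesssim \epsilon^n \abs{x}^n$, and in the smooth case $n = \infty$ the same bound holds with every exponent in place of $n$ (using that all derivatives of $V$ vanish at $0$ and are bounded). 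Because $z_\xi\parens{x}=U\parens{x-\xi}$ decays exponentially and $\xi$ ranges over the bounded set $B_\eta$, the weighted norms $\norm{\abs{\cdot}^k U\parens{\cdot - \xi}}_{L^2}$ are finite and bounded uniformly in $\xi$, so the Cauchy--Schwarz inequality gives $\abs{\int \cbrackets{\parens{V_\epsilon - 1}z_\xi w}} \lesssim \epsilon^2 \norm{w}_{H^1}$ (respectively $\lesssim \epsilon^n \norm{w}_{H^1}$).

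For the charge term the choice of estimate depends on the regime, and this is the step that requires genuine care. In the non-degenerate case I would simply invoke Lemma \ref{Lemma:FourTerms} and $\norm{z_\xi}_{H^1}=\norm{U}_{H^1}$ to get $\epsilon^3 \abs{\int \parens{K_\epsilon \phi_{\epsilon, z_\xi^2} z_\xi w}} \lesssim \epsilon^3 \norm{w}_{H^1}$, which is dominated by $\epsilon^2$. This crude bound is \emph{insufficient} in the degenerate case, where $\gamma = \min\parens{n,2m+3} \geq 4 > 3$; there I would instead apply Lemma \ref{Lemma:EstimateForK} with $s = m$ — legitimate since \ref{K_2} forces $\Dif_0^l K = 0$ for $l \in \set{0, \ldots, m-1}$ — to obtain $\int \abs{K_\epsilon \phi_{\epsilon, z_\xi^2} z_\xi w} \lesssim \epsilon^{2m} \norm{w}_{H^1}$, whence the charge term is $\lesssim \epsilon^{2m+3} \norm{w}_{H^1}$ (and $\lesssim \epsilon^N$ for every $N$ when $m = \infty$, by taking $s$ arbitrarily large).

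Combining the two estimates gives $\norm{\nabla E_\epsilon\parens{z_\xi}}_{H^1} \lesssim \epsilon^2 + \epsilon^3 \lesssim \epsilon^2$ in the non-degenerate case and $\lesssim \epsilon^n + \epsilon^{2m+3} \lesssim \epsilon^{\min\parens{n, 2m+3}} = \epsilon^\gamma$ in the degenerate case, with $\epsilon_\eta$ shrunk so that the implied constants — which depend on $\eta$ through the weighted $L^2$ norms of $U$ — remain controlled. The hard part is not any single estimate but the bookkeeping: ensuring the refined Lemma \ref{Lemma:EstimateForK} (rather than Lemma \ref{Lemma:FourTerms}) is used exactly so the charge contribution never degrades the exponent below $\gamma$, and checking that the "arbitrarily high power'' behaviour in the smooth subcases $n=\infty$ or $m=\infty$ is consistent with the finiteness of $\gamma$ guaranteed by the standing assumption $\gamma < \infty$.
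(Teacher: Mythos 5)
Your proposal is correct and is essentially the proof the paper intends: the paper omits the argument, deferring to \cite[Propositions 3.1, 3.2]{IanniVaira2008}, and your route — subtracting the vanishing derivative $\Dif_{z_\xi}\overline{I}_1$ so that only $\int\parens{V_\epsilon-1}z_\xi w$ and the $\epsilon^3$ charge term survive, bounding the former via Remark \ref{Remark:Taylor} together with the exponential decay \eqref{Equation:ExponentialDecay}, and bounding the latter by Lemma \ref{Lemma:FourTerms} in the non-degenerate case but by Lemma \ref{Lemma:EstimateForK} with $s=m$ in the degenerate case — is exactly that argument transplanted to $E_\epsilon$ (compare also the paper's own Lemma \ref{Lemma:PseudoCriticalPoints}). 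Your case bookkeeping ($\epsilon^2+\epsilon^3\lesssim\epsilon^2$ versus $\epsilon^n+\epsilon^{2m+3}\lesssim\epsilon^\gamma$, with the $n=\infty$ and $m=\infty$ subcases handled by arbitrarily high powers) is also the right one.
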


The previous lemma is proved very similarly as \cite[Propositions 3.1, 3.2]{IanniVaira2008}, so we omit its proof. In view of Lemma \ref{Lemma:PseudoCriticalConcentration}, we can prove the result that follows by arguing as in Section \ref{AuxMult}.

\begin{lem} \label{Lemma:SolutionAuxiliaryConcentration}
Given $\eta\in\ooi{0,\infty}$, there exist $\epsilon_\eta\in\ooi{0,1}$ and an application of class $C^1$,
\[
	\ooi{0, \epsilon_\eta} \times B_\eta
	\ni
	\parens{\epsilon, \xi} \mapsto w_{\eta, \epsilon,\xi}
	\in
	H^1,
\]
such that given
$\parens{\epsilon, \xi} \in \ooi{0, \epsilon_\eta} \times B_\eta$,
\[
\Pi_\xi \parens{
	\nabla E_\epsilon \parens{z_\xi + w_{\eta, \epsilon, \xi}}
}
=
0
\quad\text{and}\quad
w_{\eta, \epsilon, \xi} \in W_\xi := \parens{
	\Tan_{z_\xi} \mathcal{Z}
}^\perp.
\]
Furthermore,
\[
	\norm{w_{\eta,\epsilon,\xi}}_{H^1}
	\lesssim
	\epsilon^2;
	\quad \quad \quad \quad
	\norm{\dot{w}_{\eta,\epsilon,\xi,i}}_{H^1}
	\lesssim
	\epsilon^{2\mu}
\]
if the hypotheses of Theorem \ref{Theorem:SolutionsNonDegenerate} are satisfied and
\[
	\norm{w_{\eta,\epsilon,\xi}}_{H^1}
	\lesssim
	\epsilon^\gamma;
	\quad \quad \quad \quad
	\norm{\dot{w}_{\eta,\epsilon,\xi,i}}_{H^1}
	\lesssim
	\epsilon^{\gamma \mu}
\]
if the hypotheses of Theorem \ref{Theorem:SolutionsDegenerate} are satisfied, where the estimates hold for every $\parens{\epsilon,\xi}\in\ooi{0,\epsilon_\eta}\times B_\eta$ and $i\in\set{1,2,3}$.
\end{lem}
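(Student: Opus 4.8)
The plan is to transplant the entire apparatus of Section~\ref{AuxMult} to the present setting, substituting the genuine non-degenerate critical manifold $\mathcal{Z}$ of $\overline{I}_1$ for the pseudo-critical manifold $\mathcal{Z}_\epsilon$, the bound of Lemma~\ref{Lemma:PseudoCriticalPoints} for that of Lemma~\ref{Lemma:PseudoCriticalConcentration}, and restricting every base point to $\xi\in B_\eta$. First I would prove the analogue of Lemma~\ref{Lemma:Coercive}: bounding the Bopp--Podolsky part of $\Dif^2_{z_\xi}E_\epsilon$ by $\epsilon^3\norm{u}_{H^1}^2$ through Lemma~\ref{Lemma:FourTerms} and invoking the non-degeneracy of $\mathcal{Z}$ together with \cite[Lemma~8.9]{AmbrosettiMalchiodi2006}, one obtains $\Dif^2_{z_\xi}E_\epsilon\parens{u,u}\gtrsim\norm{u}_{H^1}^2$ on $\parens{\mathrm{span}\set{z_\xi}\oplus\Tan_{z_\xi}\mathcal{Z}}^\perp$, uniformly for $\xi\in B_\eta$ and small $\epsilon$. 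From this I would deduce, exactly as in Lemma~\ref{Lemma:BoundNormInverseL}, that $L_\xi:=\Pi_\xi\circ A_{\epsilon,\xi}|_{W_\xi}$ is invertible with $\norm{L_\xi^{-1}}_{\End\parens{W_\xi}}\lesssim 1$. This step is in fact cleaner than in Section~\ref{AuxMult}, because $\Dif^2_{z_\xi}\overline{I}_1\parens{\partial_i z_\xi,\cdot}=0$ and $\angles{z_\xi\mid\partial_i z_\xi}_{H^1_1}=0$ hold exactly by translation invariance, so $z_\xi$ is an exact eigenvector of the unperturbed operator with eigenvalue $-\parens{p-1}$, the auxiliary estimates being needed only to absorb the $\int\parens{V_\epsilon-1}z_\xi u$ and $\epsilon^3$ corrections.

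Next I would record the approximation estimates. The three bounds of Lemma~\ref{Lemma:ApproxDer} transfer verbatim with $z_\xi$ in place of $z_{\epsilon,\xi}$, since their proof only uses Lemmas~\ref{Lemma:FourTerms} and~\ref{Lemma:Elementary}, the Sobolev embeddings, and the bounds $1\lesssim\norm{z_\xi}_{H^1},\norm{\partial_i z_\xi}_{H^1}\lesssim 1$ (now trivial). The analogue of Lemma~\ref{Lemma:Estimate2ndDer} now reads $\norm{\Dif^2_{z_\xi}E_\epsilon\parens{\partial_i z_\xi,\cdot}}_{H^{-1}}\lesssim\epsilon^2$ under the hypotheses of Theorem~\ref{Theorem:SolutionsNonDegenerate} and $\lesssim\epsilon^\gamma$ under those of Theorem~\ref{Theorem:SolutionsDegenerate}; this is exactly where $\nabla V\parens{0}=0$ enters, since Remark~\ref{Remark:Taylor} upgrades the potential contribution from $O\parens{\epsilon}$ to $O\parens{\epsilon^2}$ (respectively $O\parens{\epsilon^n}$), while the Bopp--Podolsky contribution is $O\parens{\epsilon^3}$ by Lemma~\ref{Lemma:FourTerms} in the non-degenerate case and $O\parens{\epsilon^{2m+3}}$ by Lemma~\ref{Lemma:EstimateForK} with $s=m$ in the degenerate case. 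I would then define the map $\mathcal{H}\parens{\epsilon,\xi,w,\alpha}$ precisely as in the proof of Lemma~\ref{Lemma:AuxMult}, over the domain cut out by $\norm{w}_{H^1}\leq\bar{C}\epsilon^2$ (respectively $\norm{w}_{H^1}\leq\bar{C}\epsilon^\gamma$) with $\xi\in B_\eta$; invertibility of $\Dif_{\parens{w,\alpha}}\mathcal{H}$ follows from the uniform control of $L_\xi^{-1}$ and the smallness of the off-diagonal terms, so the Implicit Function Theorem produces the desired $C^1$ map $\parens{\epsilon,\xi}\mapsto\parens{w_{\eta,\epsilon,\xi},\alpha_{\eta,\epsilon,\xi}}$ with $w_{\eta,0,\xi}=0$, and the bound on $\norm{w_{\eta,\epsilon,\xi}}_{H^1}$ is immediate from the definition of the domain together with Lemma~\ref{Lemma:PseudoCriticalConcentration}.

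Finally, for the derivative bound I would differentiate the identity $\mathcal{H}\parens{\epsilon,\xi,w_{\eta,\epsilon,\xi},\alpha_{\eta,\epsilon,\xi}}=0$ in $\xi$ and solve for $\dot{w}_{\eta,\epsilon,\xi,i}$ using the invertibility of $\Dif_{\parens{w,\alpha}}\mathcal{H}$, just as at the end of the proof of Lemma~\ref{Lemma:AuxMult}. This reduces the estimate to controlling $\norm{\Dif^2_{\parens{z_\xi+w_{\eta,\epsilon,\xi}}}E_\epsilon\parens{\partial_i z_\xi,\cdot}}_{H^{-1}}+\abs{\alpha_{\eta,\epsilon,\xi}}+\norm{w_{\eta,\epsilon,\xi}}_{H^1}$, which by Lemma~\ref{Lemma:ApproxDer} and the analogue of Lemma~\ref{Lemma:Estimate2ndDer} is $\lesssim\epsilon^2+\norm{w_{\eta,\epsilon,\xi}}_{H^1}^{p-1}\lesssim\epsilon^{2\mu}$ in the non-degenerate case and $\lesssim\epsilon^{\gamma\mu}$ in the degenerate case, the power $\mu=\min\parens{1,p-1}$ arising from the $\norm{w}_{H^1}^{p-1}$ term in Lemma~\ref{Lemma:ApproxDer} when $p<2$.

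I expect the principal difficulty to be bookkeeping rather than conceptual. One must carry the dependence on $\eta$ through every estimate so that all implicit constants and the threshold $\epsilon_\eta$ are uniform over $\xi\in B_\eta$; this is where the exponential decay~\eqref{Equation:ExponentialDecay}, used at base points ranging over a fixed ball, keeps the weighted integrals $\int\abs{x-\xi}^k z_\xi^2$ bounded. One must also track the two exponent regimes carefully: in the degenerate case the single bound $\epsilon^\gamma$ with $\gamma=\min\parens{n,2m+3}$ comes from pitting the potential contribution $\epsilon^n$ against the Bopp--Podolsky contribution $\epsilon^{2m+3}$, so Lemma~\ref{Lemma:EstimateForK} must be applied with the sharp order $s=m$ at every occurrence of the nonlocal term.
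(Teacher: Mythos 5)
Your proposal is correct and takes exactly the route the paper itself takes: the paper's entire proof of this lemma is the remark that, in view of Lemma \ref{Lemma:PseudoCriticalConcentration}, one argues as in Section \ref{AuxMult}, and your transplantation of the chain Lemma \ref{Lemma:Coercive} $\to$ Lemma \ref{Lemma:BoundNormInverseL} $\to$ Lemmas \ref{Lemma:ApproxDer}, \ref{Lemma:Estimate2ndDer} $\to$ Implicit Function Theorem, with $\mathcal{Z}$ in place of $\mathcal{Z}_\epsilon$ and the domain cut out by $\norm{w}_{H^1}\leq\bar{C}\epsilon^2$ (resp.\ $\bar{C}\epsilon^\gamma$), is precisely that argument. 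Your accounting of where the improved exponents enter --- Remark \ref{Remark:Taylor} upgrading the potential contribution to $O(\epsilon^2)$ (resp.\ $O(\epsilon^n)$) and Lemma \ref{Lemma:EstimateForK} with $s=m$ giving the $O(\epsilon^{2m+3})$ Bopp--Podolsky contribution, so that the analogues of Lemmas \ref{Lemma:PseudoCriticalPoints} and \ref{Lemma:Estimate2ndDer} carry the bound $\epsilon^2$ resp.\ $\epsilon^\gamma$ --- is consistent with the paper and in fact more detailed than what the paper records.
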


Fix $\eta\in\ooi{0,\infty}$; let $\epsilon_\eta\in\ooi{0,1}$ be such that the previous lemmas hold and fix $\epsilon\in\ooi{0,\epsilon_\eta}$. In this situation, we define the \emph{reduced functional} $\Phi_{\eta, \epsilon} \colon B_\eta \to \real$ as
\[
	\Phi_{\eta, \epsilon} \parens{\xi}
	=
	E_\epsilon \parens{z_\xi + w_{\eta,\epsilon,\xi}}.
\]
Once again, $\Phi_{\eta, \epsilon}$ is of class $C^1$.

\begin{lem}
Up to shrinking $\epsilon_\eta$, the following implication holds: if
$\epsilon \in \ooi{0, \epsilon_\eta}$
and
$\nabla \Phi_{\eta, \epsilon} \parens{\xi} = 0$,
then
$\nabla E_\epsilon \parens{z_\xi + w_{\eta, \epsilon, \xi}}=0$.
\end{lem}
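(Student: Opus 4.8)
The plan is to reproduce the natural-constraint argument of Lemma~\ref{Lemma:NaturalConstraintMultiplicity} in the present setting, which is in fact slightly simpler because the manifold $\mathcal{Z}$ and its tangent vectors $\dot z_{\xi,i}$ (the partial derivatives of $\xi \mapsto z_\xi$) do not depend on $\epsilon$. First I would use that $\parens{z_\xi + w_{\eta,\epsilon,\xi}}$ solves the auxiliary equation: since $\Pi_\xi$ is the orthogonal projection onto $W_\xi = \parens{\Tan_{z_\xi}\mathcal{Z}}^\perp$, the vanishing $\Pi_\xi\parens{\nabla E_\epsilon\parens{z_\xi + w_{\eta,\epsilon,\xi}}} = 0$ forces $\nabla E_\epsilon\parens{z_\xi + w_{\eta,\epsilon,\xi}}$ to lie in $\Tan_{z_\xi}\mathcal{Z} = \mathrm{span}\set{\dot z_{\xi,1}, \dot z_{\xi,2}, \dot z_{\xi,3}}$. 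Hence I may write
\[
\nabla E_\epsilon\parens{z_\xi + w_{\eta,\epsilon,\xi}}
=
\sum_{1 \le j \le 3} c_{\epsilon,\xi,j}\,\dot z_{\xi,j}
\]
for suitable scalars $c_{\epsilon,\xi,j}$, and the lemma reduces to showing that they all vanish.

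Next I would differentiate $\Phi_{\eta,\epsilon}\parens{\xi} = E_\epsilon\parens{z_\xi + w_{\eta,\epsilon,\xi}}$ by the chain rule, obtaining
\[
\partial_i \Phi_{\eta,\epsilon}\parens{\xi}
=
\angles*{
	\dot z_{\xi,i} + \dot w_{\eta,\epsilon,\xi,i}
	\mid
	\nabla E_\epsilon\parens{z_\xi + w_{\eta,\epsilon,\xi}}
}_{H^1}
=
\sum_{1 \le j \le 3} c_{\epsilon,\xi,j}
\angles*{
	\dot z_{\xi,i} + \dot w_{\eta,\epsilon,\xi,i}
	\mid
	\dot z_{\xi,j}
}_{H^1}.
\]
Imposing $\nabla \Phi_{\eta,\epsilon}\parens{\xi} = 0$ then yields the homogeneous linear system $M_{\eta,\epsilon,\xi}\,\parens{c_{\epsilon,\xi,1}, c_{\epsilon,\xi,2}, c_{\epsilon,\xi,3}}^\top = 0$, where
\[
M_{\eta,\epsilon,\xi}
:=
\parens*{
	\angles*{
		\dot z_{\xi,i} + \dot w_{\eta,\epsilon,\xi,i}
		\mid
		\dot z_{\xi,j}
	}_{H^1}
}_{1 \le i, j \le 3}.
\]

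The crux is to show that $M_{\eta,\epsilon,\xi}$ is non-singular for $\epsilon$ small, uniformly in $\xi \in B_\eta$. Since $z_\xi = U\parens{\cdot - \xi}$, translation invariance of $\angles{\cdot \mid \cdot}_{H^1}$ together with the orthogonality $\angles{\partial_i U \mid \partial_j U}_{H^1} = 0$ for $i \neq j$ (established in the proof of Lemma~\ref{Lemma:z_eps,xi}) shows that the leading matrix $\parens*{\angles{\dot z_{\xi,i} \mid \dot z_{\xi,j}}_{H^1}}_{1 \le i,j \le 3}$ is a fixed diagonal matrix with strictly positive entries, independent of $\xi$. The remaining entries $\angles{\dot w_{\eta,\epsilon,\xi,i} \mid \dot z_{\xi,j}}_{H^1}$ are bounded by $\norm{\dot w_{\eta,\epsilon,\xi,i}}_{H^1}\norm{\dot z_{\xi,j}}_{H^1}$, which tends to $0$ as $\epsilon \to 0^+$ uniformly in $\xi \in B_\eta$ by Lemma~\ref{Lemma:SolutionAuxiliaryConcentration} (through the bounds $\norm{\dot w_{\eta,\epsilon,\xi,i}}_{H^1} \lesssim \epsilon^{2\mu}$ or $\epsilon^{\gamma\mu}$). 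Hence, up to shrinking $\epsilon_\eta$, $M_{\eta,\epsilon,\xi}$ remains in a small neighborhood of an invertible matrix and is therefore non-singular. The linear system then forces $c_{\epsilon,\xi,1} = c_{\epsilon,\xi,2} = c_{\epsilon,\xi,3} = 0$, so $\nabla E_\epsilon\parens{z_\xi + w_{\eta,\epsilon,\xi}} = 0$, as desired.

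I expect the only delicate point to be the uniformity over $\xi \in B_\eta$ in the last step, but this is automatic: the leading diagonal matrix does not depend on $\xi$, and the $\dot w$-estimates in Lemma~\ref{Lemma:SolutionAuxiliaryConcentration} are uniform over $B_\eta$. The fact that the functions $z_\xi$ are $\epsilon$-independent removes the need to track how the tangent spaces vary with $\epsilon$, which is the one simplification relative to Lemma~\ref{Lemma:NaturalConstraintMultiplicity}.
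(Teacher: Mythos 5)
Your proof is correct and is essentially the approach the paper itself takes: the paper's proof invokes \cite[Theorem 2.25]{AmbrosettiMalchiodi2006} to conclude that $\tilde{\mathcal{Z}}_\eta := \set{z_\xi + w_{\eta,\epsilon,\xi} : \xi \in B_\eta}$ is a natural constraint of $E_\epsilon$, but explicitly remarks that one may alternatively argue as in Lemma \ref{Lemma:NaturalConstraintMultiplicity} --- which is precisely what you do. Your adaptation of that argument is sound; in particular, the two points you need are correctly handled: translation invariance plus the orthogonality $\angles{\partial_i U \mid \partial_j U}_{H^1} = 0$ make the leading matrix a fixed invertible diagonal matrix independent of $\xi$, and the bounds on $\norm{\dot{w}_{\eta,\epsilon,\xi,i}}_{H^1}$ from Lemma \ref{Lemma:SolutionAuxiliaryConcentration} are uniform over $B_\eta$, so the non-singularity of $M_{\eta,\epsilon,\xi}$ holds uniformly after shrinking $\epsilon_\eta$.
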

\begin{proof}
On one hand,
\[
	\partial_i \Phi_{\eta, \epsilon} \parens{\xi}
	=
	\Dif_{\parens{z_\xi + w_{\eta,\epsilon,\xi}}}
	E_\epsilon \parens{
		\dot{z}_{\xi,i} + \dot{w}_{\eta,\epsilon,\xi,i}
	}
\]
for every $i\in\set{1,2,3}$. On the other hand, \cite[Theorem 2.25]{AmbrosettiMalchiodi2006} shows that
\[
	\tilde{\mathcal{Z}}_\eta:=\set{
		z_\xi+w_{\eta,\epsilon,\xi}: \xi \in B_\eta
	}
\]
is a natural constraint of $E_\epsilon$, hence the result. Alternatively, one may argue as in the proof of Lemma \ref{Lemma:NaturalConstraintMultiplicity}.
\end{proof}

To finish the section, we decompose $\Phi_{\eta,\epsilon}$ similarly as in Lemma \ref{Lemma:DecompMult}.
\begin{lem}\label{Lemma:DecompositionOfPhiConcentration}
We have
\[
	\Phi_{\eta,\epsilon}\parens{\xi}
	=
	C_0
	+
	\Lambda_{\eta, \epsilon} \parens{\xi}
	+
	\Omega_{\eta, \epsilon} \parens{\xi}
	+
	\Psi_{\eta, \epsilon} \parens{\xi}
\]
for every $\xi\in B_\eta$, where $C_0$ was defined in Lemma \ref{Lemma:DecompMult};
\[
	\Lambda_{\eta, \epsilon}\parens{\xi}
	:=
	\frac{1}{2}
	\int \brackets*{\parens{V_\epsilon-1} z_\xi^2}
	+
	\int \brackets*{\parens{V_\epsilon-1} z_\xi w_{\eta,\epsilon,\xi}};
\]
\[
	\Omega_{\eta, \epsilon} \parens{\xi}
	:=
	\frac{\epsilon^3}{4}
	\int\brackets*{
		K_\epsilon
		\phi_{
			\epsilon,
			\parens{z_\xi+w_{\eta,\epsilon,\xi}}^2
		}
		\parens{z_\xi+w_{\eta,\epsilon,\xi}}^2
	}
\]
and
\[
	\Psi_{\eta, \epsilon} \parens{\xi}
	:=
	\frac{1}{2}\norm{w_{\eta,\epsilon,\xi}}_{H^1_{V_\epsilon}}^2
	-
	\int\brackets*{
		\abs{z_\xi+w_{\eta,\epsilon,\xi}}^{p+1}
		-
		z_\xi^{p+1}
		-
		\parens{p+1} z_\xi^p w_{\eta,\epsilon,\xi}
	}.
\]
\end{lem}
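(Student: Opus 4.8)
The plan is to carry out the same algebraic bookkeeping as in the proof of Lemma \ref{Lemma:DecompMult}, simplified by the normalizations $V \parens{x_0} = 1$, $x_0 = 0$ and by the fact that here the profile is the fixed translate $z_\xi = U \parens{\cdot - \xi}$ rather than the rescaling $U_{\lambda_{\epsilon \xi}} \parens{\cdot - \xi}$. Writing $w := w_{\eta, \epsilon, \xi}$ for brevity, I would first expand $E_\epsilon \parens{z_\xi + w}$ term by term from \eqref{Equation:EnergyFunctional}. The Bopp--Podolsky contribution is already in the desired shape, since with $u = z_\xi + w$ it is exactly $\Omega_{\eta, \epsilon} \parens{\xi}$.

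Next I would split the quadratic part. Since $\lambda_{x_0} = V \parens{x_0}^{1/2} = 1$, we have $H^1_{\lambda_{x_0}} = H^1$, so I would decompose $\norm{u}_{H^1_{V_\epsilon}}^2 = \norm{u}_{H^1}^2 + \int \brackets*{\parens{V_\epsilon - 1} u^2}$ for $u = z_\xi + w$ and expand the square. The terms carrying the factor $\parens{V_\epsilon - 1}$ that involve $z_\xi$ collect into $\Lambda_{\eta, \epsilon} \parens{\xi}$, while $\frac{1}{2} \int \brackets*{\parens{V_\epsilon - 1} w^2}$ combines with $\frac{1}{2} \norm{w}_{H^1}^2$ to form $\frac{1}{2} \norm{w}_{H^1_{V_\epsilon}}^2$, the first term of $\Psi_{\eta, \epsilon} \parens{\xi}$. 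For the nonlinearity I would add and subtract both $\frac{1}{p + 1} \norm{z_\xi}_{L^{p+1}}^{p+1}$ and the linear term $\int \parens{z_\xi^p w}$, thereby isolating the remainder $\abs{z_\xi + w}^{p+1} - z_\xi^{p+1} - \parens{p+1} z_\xi^p w$ that makes up the second term of $\Psi_{\eta, \epsilon} \parens{\xi}$.

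The step that makes everything collapse is the equation $-\Delta z_\xi + z_\xi = z_\xi^p$ satisfied by $z_\xi$, which holds because $U$ solves \eqref{Equation:SimpleProblem} and the problem is translation invariant. Testing it against $z_\xi$ and against $w$ respectively yields the two identities $\norm{z_\xi}_{H^1}^2 = \norm{z_\xi}_{L^{p+1}}^{p+1}$ and $\angles{z_\xi \mid w}_{H^1} = \int \parens{z_\xi^p w}$. The first, together with the translation invariance $\norm{z_\xi}_{L^{p+1}}^{p+1} = \norm{U}_{L^{p+1}}^{p+1}$, reduces the leading term $\frac{1}{2} \norm{z_\xi}_{H^1}^2 - \frac{1}{p+1} \norm{z_\xi}_{L^{p+1}}^{p+1}$ to the constant $C_0$; the second cancels the linear cross term $\angles{z_\xi \mid w}_{H^1}$ coming from the quadratic part against the $\int \parens{z_\xi^p w}$ produced by the add-and-subtract, leaving precisely $\Psi_{\eta, \epsilon} \parens{\xi}$.

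Since this is a pointwise identity with no estimates involved, there is no genuine analytic obstacle; the only thing to watch is the bookkeeping of which cross terms land in $\Lambda_{\eta, \epsilon}$ as opposed to $\Psi_{\eta, \epsilon}$, together with the use of the normalization $H^1_{\lambda_{x_0}} = H^1$. It is exactly this normalization that removes the factor $V \parens{\epsilon \xi}^\theta$ present in Lemma \ref{Lemma:DecompMult} and replaces the $\lambda$-dependent leading constant there by the fixed value $C_0$.
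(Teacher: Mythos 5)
Your proof is correct and follows essentially the same route as the paper's, which gives no separate argument for this lemma and implicitly adapts the computation of Lemma \ref{Lemma:DecompMult}: expand the quadratic form, collect the $\parens{V_\epsilon - 1}$ terms involving $z_\xi$ into $\Lambda_{\eta,\epsilon}$, add and subtract the nonlinear terms, and use $-\Delta z_\xi + z_\xi = z_\xi^p$ (tested against $z_\xi$ and against $w$) together with translation invariance to reduce the leading part to $C_0$ and cancel the cross terms. One remark: your bookkeeping produces the coefficient $\tfrac{1}{p+1}$ in front of the integral in $\Psi_{\eta,\epsilon}$, consistent with Lemma \ref{Lemma:DecompMult}, whereas the statement as printed omits this factor --- this appears to be a typo in the paper, and your derivation gives the correct normalization.
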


\subsection{Proof of Theorem \ref{Theorem:SolutionsNonDegenerate}}

Fix $\eta \in \ooi{0, \infty}$ such that $0$ is the only critical point of $V$ in $\overline{B_\eta}$ and let $\epsilon_\eta \in \ooi{0, 1}$ be such that the lemmas in the previous section hold. Let us proceed to an expansion of $\nabla \Phi_{\eta, \epsilon}$.

\begin{lem}\label{Lemma:ExpansionNonDegenerate}
There exists $\delta\in\ooi{0,\infty}$ such that
\[
	\abs{
		\nabla \Phi_{\eta, \epsilon} \parens{\xi}
		-
		\epsilon^2 \Gamma_1 \parens{\xi}
	}
	\lesssim
	\epsilon^{2+\delta}
\]
for every
$
	\parens{\epsilon,\xi}\in\ooi{0,\epsilon_\eta}\times B_\eta
$,
where
\[
\Gamma_1 \parens{\xi}
:=
\sum_{1 \leq i \leq 3} \brackets*{
	\partial_i^2 V \parens{0} \xi_i \int x_i U \parens{x} \partial_i U \parens{x} \dif x
}
e_i.
\]
\end{lem}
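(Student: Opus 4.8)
The plan is to mirror the proof of Lemma \ref{Lemma:ExpansionMultiplicity}, but now working from the decomposition in Lemma \ref{Lemma:DecompositionOfPhiConcentration} and the size estimates of Lemma \ref{Lemma:SolutionAuxiliaryConcentration}. First I would differentiate $\Phi_{\eta,\epsilon}\parens{\xi}=E_\epsilon\parens{z_\xi+w_{\eta,\epsilon,\xi}}$ to get $\partial_i\Phi_{\eta,\epsilon}\parens{\xi}=\Dif_{z_\xi+w_{\eta,\epsilon,\xi}}E_\epsilon\parens{\dot z_{\xi,i}+\dot w_{\eta,\epsilon,\xi,i}}$, and then reproduce the splitting in \eqref{Equation:Aux}: peel off the single leading contribution $\Dif_{z_\xi}E_\epsilon\parens{\dot z_{\xi,i}}$ and bound every remaining term by the concentration analogues of Lemmas \ref{Lemma:ApproxDer} and \ref{Lemma:Estimate2ndDer} combined with the bounds $\norm{w_{\eta,\epsilon,\xi}}_{H^1}\lesssim\epsilon^2$, $\norm{\dot w_{\eta,\epsilon,\xi,i}}_{H^1}\lesssim\epsilon^{2\mu}$ and $\norm{\nabla E_\epsilon\parens{z_\xi}}_{H^1}\lesssim\epsilon^2$ from Lemmas \ref{Lemma:SolutionAuxiliaryConcentration} and \ref{Lemma:PseudoCriticalConcentration}. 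Since $\nabla V\parens{0}=0$, the potential contribution to the analogue of Lemma \ref{Lemma:Estimate2ndDer} is itself $O\parens{\epsilon^2}$, so each of these remainders is $O\parens{\epsilon^{2+2\mu}}+O\parens{\epsilon^3}$. It therefore remains only to expand the single term $\Dif_{z_\xi}E_\epsilon\parens{\dot z_{\xi,i}}$ up to order $\epsilon^{2+\delta}$.

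To expand it, I would use that $z_\xi$ is a critical point of $\overline{I}_1$, so $\Dif_{z_\xi}\overline{I}_1\parens{\dot z_{\xi,i}}=0$; subtracting this and applying Lemma \ref{Lemma:EnergyClassC2} leaves
\[
	\Dif_{z_\xi}E_\epsilon\parens{\dot z_{\xi,i}}
	=
	\int\brackets{\parens{V_\epsilon-1}z_\xi\dot z_{\xi,i}}
	+
	\epsilon^3\int\parens{K_\epsilon\phi_{\epsilon,z_\xi^2}z_\xi\dot z_{\xi,i}}.
\]
The Coulomb term is $O\parens{\epsilon^3}$ by Lemma \ref{Lemma:FourTerms} and the uniform bounds on $z_\xi$. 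For the potential term I would insert the second-order Taylor expansion of Remark \ref{Remark:Taylor}, namely $V_\epsilon\parens{x}-1=\tfrac{\epsilon^2}{2}\Dif^2_0V\parens{x,x}+O\parens{\epsilon^3\abs{x}^3}$, whose remainder integrates against $z_\xi\abs{\dot z_{\xi,i}}$ to $O\parens{\epsilon^3}$ thanks to the exponential decay \eqref{Equation:ExponentialDecay} and the boundedness of $\xi\in B_\eta$.

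The crux is the symmetry computation inside the surviving term $\tfrac{\epsilon^2}{2}\int\Dif^2_0V\parens{x,x}z_\xi\dot z_{\xi,i}$. Writing $\dot z_{\xi,i}=-\partial_i z_\xi$, substituting $y=x-\xi$, and expanding $\Dif^2_0V\parens{y+\xi,y+\xi}=\Dif^2_0V\parens{y,y}+2\Dif^2_0V\parens{y,\xi}+\Dif^2_0V\parens{\xi,\xi}$, I would exploit the radial symmetry of $U$: the factor $U\partial_i U=\tfrac12\partial_i\parens{U^2}$ is odd in $y_i$ and even in the other variables. Hence $\int y_jy_k U\partial_i U=0$ (the integrand is an odd-degree monomial times a radial weight) and $\int U\partial_i U=0$, so the $\Dif^2_0V\parens{y,y}$ and $\Dif^2_0V\parens{\xi,\xi}$ pieces drop out, while $\int y_j U\partial_i U=\delta_{ij}\int x_i U\partial_i U$ collapses the cross term and produces the coefficient $\int x_i U\partial_i U\,\dif x$ together with the Hessian entries; choosing coordinates along the principal axes of $\Dif^2_0V$ (legitimate because $U$ is radial, which also makes $\int x_i U\partial_i U$ independent of $i$) leaves exactly $\epsilon^2\Gamma_1\parens{\xi}$. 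I expect this parity bookkeeping — deciding precisely which mixed terms survive — to be the main obstacle, the rest being uniform-in-$\xi$ decay estimates.

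Finally I would collect the orders. The leading-term expansion carries errors of size $\epsilon^3$ (Coulomb and Taylor remainder), and the remainders from the first paragraph are $O\parens{\epsilon^{2+2\mu}}$, so the total error is controlled by $\max\parens{\epsilon^3,\epsilon^{2+2\mu}}$. The conclusion then holds with $\delta:=\min\parens{1,2\mu}>0$, which completes the argument.
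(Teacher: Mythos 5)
Your proposal is correct, but it takes a genuinely different route from the paper's own proof of this lemma. The paper works from the decomposition of Lemma \ref{Lemma:DecompositionOfPhiConcentration} and differentiates it term by term: the leading term $\epsilon^2\Gamma_1$ and the $O\parens{\epsilon^3}$ Taylor error come from the first piece of $\partial_i\Lambda_{\eta,\epsilon}$, the remaining pieces of $\partial_i\Lambda_{\eta,\epsilon}$ are $O\parens{\epsilon^4}+O\parens{\epsilon^{2\parens{1+\mu}}}$, $\partial_i\Omega_{\eta,\epsilon}$ is $O\parens{\epsilon^3}$ by Lemmas \ref{Lemma:FourTerms} and \ref{Lemma:SolutionAuxiliaryConcentration}, and $\partial_i\Psi_{\eta,\epsilon}$ is estimated directly using Lemma \ref{Lemma:Elementary}. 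You instead transplant the functional-analytic splitting \eqref{Equation:Aux} from the proof of Lemma \ref{Lemma:ExpansionMultiplicity}: peel off $\Dif_{z_\xi}E_\epsilon\parens{\dot{z}_{\xi,i}}$, bound all remainders by concentration analogues of Lemmas \ref{Lemma:ApproxDer} and \ref{Lemma:Estimate2ndDer} combined with Lemmas \ref{Lemma:PseudoCriticalConcentration} and \ref{Lemma:SolutionAuxiliaryConcentration}, and then identify the leading term via $\Dif_{z_\xi}\overline{I}_1\parens{\dot{z}_{\xi,i}}=0$, which reproduces exactly the integral $\int\brackets{\parens{V_\epsilon-1}z_\xi\dot{z}_{\xi,i}}$ that the paper expands. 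Both routes thus share the same core computation (second-order Taylor expansion plus the parity argument) and yield the same $\delta=\min\parens{1,2\mu}$. Your route buys reuse of machinery already proved in Section \ref{MultiplicityResult} and avoids re-estimating the nonlinear terms in $\Psi_{\eta,\epsilon}$; its only extra obligation is checking that Lemmas \ref{Lemma:ApproxDer} and \ref{Lemma:Estimate2ndDer} carry over, which they do (their proofs use only Lemma \ref{Lemma:FourTerms}, uniform bounds on $z_\xi$, exponential decay, and --- for the second --- the exact identity $\Dif^2_{z_\xi}\overline{I}_1\parens{\dot{z}_{\xi,i},\cdot}=0$ together with the bound $\abs{V_\epsilon-1}\lesssim\epsilon^2\abs{x}^2$ available because $\nabla V\parens{0}=0$), a point you correctly flag. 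In fact your treatment of the symmetry step is more careful than the paper's: you observe that the cross term produces $\sum_k\partial_i\partial_kV\parens{0}\xi_k$ and that one must rotate to principal axes of $\Dif^2_0V$ (harmless, since $U$ is radial) to land on the stated diagonal form of $\Gamma_1$, whereas the paper passes over the mixed Hessian entries silently; both you and the paper also absorb the sign coming from $\dot{z}_{\xi,i}=-\partial_iz_\xi$, which is immaterial for the degree argument in the proof of Theorem \ref{Theorem:SolutionsNonDegenerate}.
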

\begin{proof}
Consider the decomposition in Lemma \ref{Lemma:DecompositionOfPhiConcentration}.

\emph{Expansion of $\partial_i \Lambda_{\eta, \epsilon}$.} It is clear that
\[
	\partial_i\Lambda_{\eta, \epsilon}\parens{\xi}
	=
	\int\brackets{
		\parens{V_\epsilon-1} z_\xi \dot{z}_{\xi,i}
	}
	+
	\int\brackets{
		\parens{
			V_\epsilon-1
		}
		\dot{z}_{\xi,i} w_{\epsilon,\xi}
	}
	+
	\int\brackets{
		\parens{
			V_\epsilon-1
		}
		z_{\xi} \dot{w}_{\epsilon,\xi,i}
	}.
\]

\emph{- Expansion of
$
\int \parens{V_\epsilon-1} z_\xi \dot{z}_{\xi,i}
$.}

Due to \eqref{Equation:ExponentialDecay} and Remark \ref{Remark:Taylor},
\begin{multline*}
	\abs*{
		\int\brackets{
			\parens{V_\epsilon-1} z_\xi \dot{z}_{\xi,i}
		}
		-
		\frac{\epsilon^2}{2}
		\int
			\Dif^2_0V\parens{x,x}
			z_{\xi}\parens{x}
			\dot{z}_{\xi,i}\parens{x}
		\dif x
	}
	\lesssim
	\\
	\lesssim
	\epsilon^3
	\int
		\abs{x}^3 z_\xi\parens{x} \abs{\dot{z}_{\xi,i}\parens{x}}
	\dif x
	\lesssim
	\epsilon^3.
\end{multline*}
It follows from a change of variable that
\[
\int
	\Dif^2_0 V\parens{x,x} z_\xi \parens{x} \dot{z}_{\xi,i} \parens{x}
\dif x
=
\int
	\Dif^2_0 V\parens{x + \xi,x + \xi}
	U \parens{x} \partial_i U \parens{x}
\dif x.
\]
Considering that $U$ is an even function and $\partial_i U$ is an odd function, we arrive at
\[
\int
	\Dif^2_0 V\parens{x,x} z_\xi \parens{x} \dot{z}_{\xi,i} \parens{x}
\dif x
=
2 \partial_i^2 V \parens{0} \xi_i
\int
	x_i U \parens{x} \partial_i U \parens{x}
\dif x.
\]

\emph{- Estimation of
$
\int\brackets{
	\parens{V_\epsilon-1}
	\dot{z}_{\xi,i} w_{\epsilon,\xi}
}
$,
$
\int\brackets{
	\parens{V_\epsilon-1}
	z_{\xi} \dot{w}_{\epsilon,\xi,i}
}
$.}
In view of \eqref{Equation:ExponentialDecay}, \ref{V_1} and Lemma \ref{Lemma:SolutionAuxiliaryConcentration},
\[
	\int\abs{
		\parens{V_\epsilon-1} \dot{z}_{\xi,i} w_{\epsilon,\xi}
	}
	\lesssim
	\epsilon^2
	\norm{w_{\epsilon,\xi}}_{H^1}
	\brackets*{
		\int
			\Dif^2_0V\parens{x,x}^2
			\dot{z}_{\xi,i}\parens{x}^2
		\dif x
	}^{1/2}
	\lesssim
	\epsilon^4
\]
and
$
\int\abs{
	\parens{V_\epsilon-1} z_{\xi} \dot{w}_{\epsilon,\xi,i}
}
\lesssim
\epsilon^{2 \parens{1+\mu}}.
$

\emph{Estimation of $\abs{\partial_i \Omega_{\eta, \epsilon}}$.}
Due to Lemmas \ref{Lemma:FourTerms} and \ref{Lemma:SolutionAuxiliaryConcentration}, we obtain
$\abs{\partial_i \Omega_{\eta, \epsilon} \parens{\xi}} \lesssim \epsilon^3$.

\emph{Estimation of $\abs{\partial_i \Psi_{\eta, \epsilon}}$.}
Clearly,
\begin{multline*}
	\abs{\partial_i\Psi_{\eta, \epsilon}\parens{\xi}}
	\leq
	\norm{w_{\epsilon,\xi}}_{H^1}
	\norm{\dot{w}_{\epsilon,\xi,i}}_{H^1}
	+
	\int \abs*{
		\parens{V_\epsilon-1} w_{\epsilon,\xi} \dot{w}_{\epsilon,\xi,i}
	}
	+
	\\
	+
	\parens{p+1}
	\int\abs*{
		\parens*{
			\parens{z_\xi+w_{\epsilon,\xi}}
			\abs{z_\xi+w_{\epsilon,\xi}}^{p-1}
			-
			z_\xi^p
			-
			p z_\xi^{p-1} w_{\epsilon,\xi}
		}
	\dot{z}_{\xi,i}
	}
	+
	\\
	+
	\parens{p+1}
	\int\abs*{
		\brackets*{
			\parens{z_\xi+w_{\epsilon,\xi}}
			\abs{z_\xi+w_{\epsilon,\xi}}^{p-1}
			-
			z_\xi^p
		}
		\dot{w}_{\epsilon,\xi,i}
	}.
\end{multline*}
As $V$ is bounded, we can use Lemma \ref{Lemma:SolutionAuxiliaryConcentration} to conclude that
\[
\norm{w_{\epsilon,\xi}}_{H^1}
\norm{\dot{w}_{\epsilon,\xi,i}}_{H^1}
+
\int \abs*{
	\parens{V_\epsilon-1} w_{\epsilon,\xi} \dot{w}_{\epsilon,\xi,i}
}
\lesssim
\epsilon^{2\parens{1+\mu}}.
\]
In view of Lemmas \ref{Lemma:Elementary} and \ref{Lemma:SolutionAuxiliaryConcentration},
\[
\int\abs*{
		\parens*{
			\parens{z_\xi+w_{\epsilon,\xi}}
			\abs{z_\xi+w_{\epsilon,\xi}}^{p-1}
			-
			z_\xi^p
			-
			p z_\xi^{p-1} w_{\epsilon,\xi}
		}
	\dot{z}_{\xi,i}
	}
\lesssim
\epsilon^{\min\parens{4,2p}}
\]
and
\[
\int\abs*{
	\brackets*{
		\parens{z_\xi+w_{\epsilon,\xi}}
		\abs{z_\xi+w_{\epsilon,\xi}}^{p-1}
		-
		z_\xi^p
	}
	\dot{w}_{\epsilon,\xi,i}
}
\lesssim
\epsilon^{2 \parens{1+\mu}}.
\]
\end{proof}

Now, we argue similarly as in the proof of \cite[Theorem 2.17]{AmbrosettiMalchiodi2006} to prove Theorem \ref{Theorem:SolutionsNonDegenerate}.

\begin{proof}[Proof of Theorem \ref{Theorem:SolutionsNonDegenerate}]
\emph{Existence of critical point.}
The only root of $\Gamma_1$ in $B_\eta$ is $0$, so
$
	\deg \parens{\Gamma_1, B_\eta, 0}\neq 0
$,
where $\deg$ denotes the topological degree. In view of the continuity of 
$
	\deg \parens{\cdot, B_\eta, 0}
$
and Lemma \ref{Lemma:ExpansionNonDegenerate}, we deduce that, up to shrinking $\epsilon_\eta$,
$\deg \parens{\nabla \Phi_{\eta, \epsilon}, B_\eta, 0} \neq 0$
for every $\epsilon \in \ooi{0, \epsilon_\eta}$. We conclude that given $\epsilon \in \ooi{0, \epsilon_\eta}$, $\Phi_{\eta,\epsilon}$ has a critical point $\xi_\epsilon \in B_\eta$ and
$\nabla E_\epsilon \parens{u_\epsilon} = 0$,
where
$u_\epsilon := z_{\xi_\epsilon} + w_{\eta, \epsilon, \xi_\epsilon}$.

\emph{Concentration around $0$.}
It suffices to argue as in the concentration part in the proof of Theorem
\ref{Theorem:MultiplicitySolutions} by taking $\mathcal{M}=\set{0}$; $\mathcal{N} = B_\eta$; $f = \epsilon^2 \Gamma_1$ and $g = \Phi_{\eta, \epsilon}$.
\end{proof}

\subsection{Proof of Theorem \ref{Theorem:SolutionsDegenerate}}

We begin with an expansion of $\nabla \Phi_{\eta, \epsilon}$.
\begin{lem}\label{Lemma:ExpansionDegenerate}
Fix $\eta \in \ooi{0, \infty}$ and let $\epsilon_\eta$ be furnished by Lemma \ref{Lemma:SolutionAuxiliaryConcentration}. We conclude that there exists $\delta\in\ooi{0,\infty}$ such that
\[
	\abs{
		\nabla \Phi_{\eta, \epsilon} \parens{\xi}
		-
		\epsilon^\gamma \Gamma_2 \parens{\xi}
	}
	\lesssim
	\epsilon^{\gamma+\delta}
\]
for every
$
	\parens{\epsilon,\xi}\in\ooi{0,\epsilon_\eta}\times B_\eta
$,
where
\[
	\Gamma_2\parens{\xi}
	:=
	\begin{cases}
		f\parens{\xi}
		&\text{if}~\gamma=n<2m+3;
		\\
		g\parens{\xi}
		&\text{if}~n>2m+3=\gamma;
	\end{cases}
\]
and
\[
f \parens{\xi}
:=
\frac{1}{n!}
\sum_{\substack{0\leq\alpha\leq n \\ 1\leq i\leq 3}} \brackets*{
	\partial_i^n V \parens{0}
	\binom{n}{\alpha}
	\xi_i^{n-\alpha}
	\int
		x_i^\alpha U \parens{x} \partial_i U \parens{x}
	\dif x
}
e_i.
\]
\end{lem}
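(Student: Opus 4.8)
The plan is to mirror the proof of Lemma~\ref{Lemma:ExpansionNonDegenerate}: starting from the decomposition $\Phi_{\eta,\epsilon} = C_0 + \Lambda_{\eta,\epsilon} + \Omega_{\eta,\epsilon} + \Psi_{\eta,\epsilon}$ of Lemma~\ref{Lemma:DecompositionOfPhiConcentration}, I would differentiate in $\xi_i$ and isolate the leading term. Since we are in the setting of Theorem~\ref{Theorem:SolutionsDegenerate}, where $\gamma = \min\parens{n, 2m+3} < \infty$ and the borderline value $n = 2m+3$ is excluded, exactly one of $n < 2m+3$ and $n > 2m+3$ holds; these correspond respectively to the potential term $\Lambda_{\eta,\epsilon}$ and the Bopp--Podolsky term $\Omega_{\eta,\epsilon}$ dominating, hence to the two cases in the definition of $\Gamma_2$. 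Throughout, the terms involving $w_{\eta,\epsilon,\xi}$ and $\dot{w}_{\eta,\epsilon,\xi,i}$ are controlled by the bounds $\norm{w_{\eta,\epsilon,\xi}}_{H^1} \lesssim \epsilon^\gamma$ and $\norm{\dot{w}_{\eta,\epsilon,\xi,i}}_{H^1} \lesssim \epsilon^{\gamma\mu}$ of Lemma~\ref{Lemma:SolutionAuxiliaryConcentration}, while \eqref{Equation:ExponentialDecay} provides the finiteness of the polynomial moments of $U$ that all change-of-variable computations require.

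Suppose first that $\gamma = n < 2m+3$. Here the analysis of $\partial_i\Lambda_{\eta,\epsilon}$ repeats that in Lemma~\ref{Lemma:ExpansionNonDegenerate} with the second-order Taylor expansion replaced by the $n$-th order expansion of Remark~\ref{Remark:Taylor}. Writing $\partial_i\Lambda_{\eta,\epsilon}\parens{\xi}$ as $\int\brackets{\parens{V_\epsilon - 1} z_\xi \dot{z}_{\xi,i}}$ plus two terms carrying a factor $w_{\eta,\epsilon,\xi}$ or $\dot{w}_{\eta,\epsilon,\xi,i}$, Remark~\ref{Remark:Taylor} gives $\int\brackets{\parens{V_\epsilon - 1} z_\xi \dot{z}_{\xi,i}} = \frac{\epsilon^n}{n!}\int \Dif^n_0 V\parens{x,\ldots,x}\,z_\xi\parens{x}\dot{z}_{\xi,i}\parens{x}\dif x + O\parens{\epsilon^{n+1}}$; since the mixed partials of $V$ of order $n$ vanish at $0$ by \ref{V_3}, only the pure powers $\partial_j^n V\parens{0} x_j^n$ survive, and after the change of variables $x \mapsto x + \xi$, a binomial expansion, and the parity of $U$ and $\partial_i U$, this reduces to $\epsilon^n f\parens{\xi}_i$ up to $O\parens{\epsilon^{n+1}}$. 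The two remaining terms of $\partial_i\Lambda_{\eta,\epsilon}$ are $O\parens{\epsilon^{n+\gamma\mu}}$, the term $\partial_i\Omega_{\eta,\epsilon}$ is $O\parens{\epsilon^{2m+3}}$ by Lemmas~\ref{Lemma:FourTerms}, \ref{Lemma:EstimateForK} (applied with $s = m$) and~\ref{Lemma:SolutionAuxiliaryConcentration}, and $\partial_i\Psi_{\eta,\epsilon}$ is of still higher order exactly as before; as $2m+3 > n$, all of these are absorbed into $O\parens{\epsilon^{n+\delta}}$.

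Suppose now that $n > 2m+3 = \gamma$. Then $\partial_i\Lambda_{\eta,\epsilon} = O\parens{\epsilon^n}$ and $\partial_i\Psi_{\eta,\epsilon}$ are error terms, and the leading contribution comes from $\partial_i\Omega_{\eta,\epsilon}$. I would first replace $z_\xi + w_{\eta,\epsilon,\xi}$ by $z_\xi$ inside $\Omega_{\eta,\epsilon}$, the difference being of higher order by the multilinearity in Remark~\ref{Remark:phi} together with Lemmas~\ref{Lemma:FourTerms}, \ref{Lemma:EstimateForK} and~\ref{Lemma:SolutionAuxiliaryConcentration}; this reduces $\partial_i\Omega_{\eta,\epsilon}$, up to a higher-order error, to $\frac{\epsilon^3}{4}\partial_{\xi_i}\int\int K_\epsilon\parens{x} K_\epsilon\parens{y}\kappa_\epsilon\parens{x-y} z_\xi\parens{x}^2 z_\xi\parens{y}^2\dif x\dif y$. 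On the exponentially weighted region I would then expand each factor $K_\epsilon = K\parens{\epsilon\,\cdot}$ to order $m$ using \ref{K_2}, so that only the pure powers $\frac{\epsilon^m}{m!}\partial_j^m K\parens{0} x_j^m$ survive, and replace $\kappa_\epsilon\parens{x-y}$ by its pointwise limit $1$; the Taylor remainder and the estimate $\abs{\kappa_\epsilon\parens{x-y} - 1} \lesssim \epsilon\abs{x-y}$ produce errors of order $\epsilon^{2m+4}$ after the $\epsilon^3$ prefactor. Differentiating the resulting polynomial double integral in $\xi_i$ via $\partial_{\xi_i} z_\xi = \dot{z}_{\xi,i}$, changing variables $x \mapsto x + \xi$ and $y \mapsto y + \xi$, and performing a double binomial expansion, I expect to recognize the coefficients as the constants $\tilde{C}_{\alpha,\beta,i,j,l}$: the two symmetric contributions from differentiating the $x$- and $y$-copies of $z_\xi^2$ coincide after relabeling the summation indices (using the symmetry of $\partial_j^m K\parens{0}\partial_l^m K\parens{0}$ in $j, l$) and assemble into the single sum defining $g$, giving $\epsilon^{2m+3} g\parens{\xi}_i$ up to $O\parens{\epsilon^{2m+4}}$.

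The main obstacle is the index bookkeeping in the second case: one must carry out the double Taylor expansion of the two charge-density factors, keep track of the convolution kernel $\kappa_\epsilon$, and match the resulting terms indexed by $\alpha, \beta, i, j, l$ to the constants $\tilde{C}_{\alpha,\beta,i,j,l}$ so that the leading coefficient is precisely $g\parens{\xi}$ (up to the sign convention fixed in Lemma~\ref{Lemma:ExpansionNonDegenerate}). A secondary point is that a single exponent $\delta > 0$ must work uniformly in both cases; collecting the above estimates, it suffices to take $\delta = \min\parens{1, \abs{n - 2m - 3}, \gamma\mu}$, which is positive precisely because $n = 2m+3$ is ruled out.
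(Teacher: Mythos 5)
Your proposal follows essentially the same route as the paper's proof: the same decomposition from Lemma~\ref{Lemma:DecompositionOfPhiConcentration}, the same case analysis driven by whether $\partial_i \Lambda_{\eta,\epsilon}$ (order $\epsilon^n$) or $\partial_i \Omega_{\eta,\epsilon}$ (order $\epsilon^{2m+3}$) dominates, the same Taylor expansion, parity, change-of-variables, and double-binomial computations identifying $f$ and $g$ (your ``replace $z_\xi + w_{\eta,\epsilon,\xi}$ by $z_\xi$, then differentiate'' is equivalent to the paper's ``differentiate, then split off the $w$-dependent remainder''), and the same error bounds from Lemmas~\ref{Lemma:SolutionAuxiliaryConcentration}, \ref{Lemma:FourTerms} and~\ref{Lemma:EstimateForK}. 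The only point the paper treats explicitly that you gloss over is the subcases $m = \infty$ (possible when $\gamma = n < 2m+3$) and $n = \infty$ (possible when $n > 2m+3 = \gamma$), where your bounds ``$O(\epsilon^{2m+3})$ via Lemma~\ref{Lemma:EstimateForK} with $s = m$'' and ``$O(\epsilon^n)$'' must instead be read as ``faster than any power of $\epsilon$,'' obtained by applying Lemma~\ref{Lemma:EstimateForK} with arbitrarily large finite $s$, respectively the integral form of the Taylor remainder for $V$ --- a one-line fix.
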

\begin{proof}
The difference with the proof of Lemma \ref{Lemma:ExpansionNonDegenerate} is that the lowest order term of the expansion of $\partial_i\Phi_{\eta,\epsilon}$ will come from the expansion of
$\partial_i \Lambda_{\eta, \epsilon}$
if $n<2m+3$ and from the expansion of 
$\partial_i \Omega_{\eta, \epsilon}$
if $n>2m+3$.

\emph{Expansion of $\partial_i \Lambda_{\eta, \epsilon}$.}
Suppose that $n = \infty$. Due to \ref{V_1},
\[
	\abs{V_\epsilon \parens{x} - 1}
	=
	\epsilon^L \abs*{
		\int_0^1
			t^L \Dif_{t \epsilon x}^L V \parens{x, \ldots, x}
		\dif t
	}
	\lesssim
	\epsilon^L.
\]
for every $x \in \real^3$ and $L \in \nat$. Therefore, Lemma \ref{Lemma:SolutionAuxiliaryConcentration} implies
$
	\abs{\partial_i \Lambda_{\eta, \epsilon} \parens{\xi}}
	\lesssim
	\epsilon^L
$
for every $L \in \nat$ and $\parens{\epsilon, \xi} \in \ooi{0, \epsilon_\eta} \times \real^3$.

Suppose that $n < \infty$. Let us prove that
\[
	\abs*{
		\partial_i \Lambda_{\eta, \epsilon} \parens{\xi}
		-
		f \parens{\xi} \cdot e_i
	}
	\lesssim
	\epsilon^{n + \min\parens{1, \gamma \mu}}.
\]
Due to \eqref{Equation:ExponentialDecay} and Remark \ref{Remark:Taylor},
\begin{multline*}
	\abs*{
		\int\brackets{
			\parens{V_\epsilon-1} z_\xi \dot{z}_{\xi,i}
		}
		-
		\frac{\epsilon^n}{n!}
		\int
			\Dif^n_0V\parens{x, \ldots, x}
			z_{\xi}\parens{x}
			\dot{z}_{\xi,i}\parens{x}
		\dif x
	}
	\lesssim
	\\
	\lesssim
	\epsilon^{n+1}
	\int
		\abs{x}^{n+1} z_\xi\parens{x} \abs{\dot{z}_{\xi,i}\parens{x}}
	\dif x
	\lesssim
	\epsilon^{n+1}.
\end{multline*}
We can argue as in the analogous situation in the proof of Lemma \ref{Lemma:ExpansionNonDegenerate} to deduce that
\[
\int
	\Dif^n_0V\parens{x, \ldots, x}
	z_{\xi}\parens{x}
	\dot{z}_{\xi,i}\parens{x}
\dif x
=
f \parens{\xi} \cdot e_i.
\]
Let us estimate the remaining term. We have
\[
	\int\abs{
		\parens{V_\epsilon-1} \dot{z}_{\xi,i} w_{\epsilon,\xi}
	}
	\lesssim
	\epsilon^n
	\norm{w_{\epsilon,\xi}}_{H^1}
	\brackets*{
		\int
			\Dif^n_0 V\parens{x, \ldots, x}^2
			\dot{z}_{\xi,i}\parens{x}^2
		\dif x
	}^{1/2}
	\lesssim
	\epsilon^{n + \gamma}
\]
and
$
\int\abs{
	\parens{V_\epsilon-1} z_{\xi} \dot{w}_{\epsilon,\xi,i}
}
\lesssim
\epsilon^{n + \gamma \mu}.
$

\emph{Expansion of $\partial_i \Omega_{\eta, \epsilon}$.}
Suppose that $m = \infty$. Due to Lemma \ref{Lemma:EstimateForK}, we can treat this case similarly as the case $n = \infty$ in the expansion of $\partial_i \Lambda_{\eta, \epsilon}$.

Suppose that $m < \infty$. Let us prove that
\[
	\abs*{
		\partial_i \Omega_{\eta, \epsilon} \parens{\xi}
		-
		g \parens{\xi} \cdot e_i
	}
	\lesssim
	\epsilon^{2m+\min\parens{\gamma \mu, 1}}.
\]
Due to Remark \ref{Remark:phi},
\[
	\partial_i \Omega_{\eta, \epsilon} \parens{\xi}
	=
	\epsilon^3
	\int\brackets*{
		K_\epsilon
		\phi_{
			\epsilon,
			\parens{z_\xi+w_{\eta,\epsilon,\xi}}^2
		}
		\parens{z_\xi+w_{\eta,\epsilon,\xi}}
		\parens{\dot{z}_{\xi, i}+\dot{w}_{\eta, \epsilon, \xi, i}}
	}.
\]
Clearly,
\begin{multline} \label{Equation:Temporary}
\int\brackets*{
	K_\epsilon
	\phi_{
		\epsilon,
		\parens{z_\xi+w_{\eta,\epsilon,\xi}}^2
	}
	\parens{z_\xi+w_{\eta,\epsilon,\xi}}
	\parens{\dot{z}_{\xi, i}+\dot{w}_{\eta, \epsilon, \xi, i}}
}
=
\int\parens{
	K_\epsilon \phi_{\epsilon, z_\xi^2} z_\xi \dot{z}_{\xi, i}
}
+
\\
+ \cbrackets*{
	\int\brackets*{
		K_\epsilon
		\phi_{\epsilon, \parens{z_\xi+w_{\eta,\epsilon,\xi}}^2}
		\parens{z_\xi+w_{\eta,\epsilon,\xi}}
		\parens{\dot{z}_{\xi, i}+\dot{w}_{\eta, \epsilon, \xi, i}}
	}
	-
	\int\parens{
		K_\epsilon \phi_{\epsilon, z_\xi^2} z_\xi \dot{z}_{\xi, i}
	}
}.
\end{multline}

Let us expand the first term on the right-hand side of \eqref{Equation:Temporary}. By definition,
\[
\int\parens{
	K_\epsilon \phi_{\epsilon, z_\xi^2} z_\xi \dot{z}_{\xi, i}
}
=
\int \int
	K_\epsilon \parens{x} K_\epsilon \parens{y}
	\kappa_\epsilon \parens{x - y}
	z_\xi \parens{y}^2 z_\xi \parens{x} \dot{z}_{\xi, i} \parens{x}
\dif x \dif y.
\]
Similarly as in Lemma \ref{Remark:Taylor}, the Taylor expansion of $K, \kappa$ around $0$ shows that
\begin{multline*}
\left|
	\int\parens{
		K_\epsilon \phi_{\epsilon, z_\xi^2} z_\xi \dot{z}_{\xi, i}
	}
	-
	\frac{\epsilon^{2m}}{\parens{m!}^2}
	\int \int
		\Dif^m_0 K \parens{x, \ldots, x}
		\Dif^m_0 K \parens{y, \ldots, y}
		\times
\right.
\\
\left.
		\times
		z_\xi \parens{y}^2 z_\xi \parens{x} \dot{z}_{\xi, i} \parens{x}
	\dif x \dif y
\right|
\lesssim
\epsilon^{2m+1}.
\end{multline*}
To finish, it suffices to do a change of variable.

An application of Lemmas \ref{Lemma:FourTerms}, \ref{Lemma:SolutionAuxiliaryConcentration} shows that second term on the right-hand side of \eqref{Equation:Temporary} may be estimated up to multiplicative constant by $\epsilon^{2m+\gamma \mu}$.

\emph{Estimation of $\abs{\partial_i \Psi_{\eta, \epsilon}}$.}
Done as in the proof of Lemma \ref{Lemma:ExpansionNonDegenerate}.
\end{proof}

We finish by proving the theorem.

\begin{proof}[Proof of Theorem \ref{Theorem:SolutionsDegenerate}]
Fix $\eta\in\ooi{0,\infty}$ such that $0$ is the only root of $\Gamma_2$ in $B_\eta$ and let $\epsilon_\eta \in \ooi{0,1}$ be such that the lemmas in Section \ref{Section:LyapunovConcentration} hold. At this point, it suffices to follow the general argument used to prove Theorem \ref{Theorem:SolutionsNonDegenerate}.
\end{proof}

\sloppy
\printbibliography
\end{document}